\newcommand{\relation}[3]{#1\xrightarrow{#3} #2}
\newcommand{\C}{\mathbb{C}}
\newcommand{\CC}[1]{\mathbb{C}^{#1}}
\newcommand{\SlC}{\mathrm{SL}_n(\mathbb{C})}
\newcommand{\SpC}{\mathrm{Sp}_{2n}(\mathbb{C})}
\newcommand{\Eu}[1]{\begin{pmatrix}
I_n&#1\\0&I_n
\end{pmatrix}}
\newcommand{\CwO}{\mathbb{C}^{2n}\setminus \{0\}}
\newcommand{\deriv}[1]{\frac{\partial}{\partial {#1}}}
\newcommand{\Deriv}[2]{\deriv{{#1}_{#2}}}
\newcommand{\ringofholo}{\mathcal{O}(X)}
\newcommand{\holiealg}{\mathrm{VF}(X)}
\newcommand{\holiealgint}{\mathrm{VF}^{int}(X)}
\newcommand{\algliealg}{\mathrm{VF}_{alg}(X)}
\newcommand{\algliealgint}{\mathrm{VF}_{alg}^{int}(X)}
\newcommand{\regularfunc}{\mathbb{C}[X]}
\newcommand{\Lie}{\mathrm{Lie}(X)}
\newcommand{\Liealg}{\mathrm{Lie}_{alg}(X)}
\newtheorem{theorem}{Theorem}[section]
\newtheorem{lemma}[theorem]{Lemma}
\newtheorem{cor}[theorem]{Corollary}
\newtheorem{prop}[theorem]{Proposition}
\newtheorem{defi}[theorem]{Definition}
\newtheorem{thm}{Theorem}
\title{A Criterion for the Density Property of Stein manifolds}
\author[R.B. Andrist]{Rafael B. Andrist}
\address{Department of Mathematics \\
American University of Beirut \\
Beirut, Lebanon \\ And Faculty of Mathematics and Physics \\
University of Ljubljana \\
Ljubljana, Slovenia}
\email{rafael-benedikt.andrist@fmf.uni-lj.si}
\author[G. Freudenburg]{Gene Freudenburg}
\address{Department of Mathematics \\
Western Michigan University}
\email{gene.freudenburg@wmich.edu}
\author[G. Huang]{Gaofeng Huang}
\author[F. Kutzschebauch]{Frank Kutzschebauch}
\author[J. Schott]{Josua Schott}
\address{Mathematical Institute \\ University of Bern \\  
Bern, Switzerland}
\email{gaofeng.huang@unibe.ch, josua.schott@unibe.ch, frank.kutzschebauch@unibe.ch}
\thanks{The research of Rafael Andrist was supported by the European Union (ERC Advanced grant HPDR, 101053085 to Franc Forstneri\v{c}) and grant N1-0237 from ARRS, Republic of Slovenia. Gaofeng Huang, Frank Kutzschebauch, and Josua Schott were supported by Schweizerischer Nationalfonds - Grant 200021-207335. }
\begin{document}

\subjclass{MSC2020: 32M25 (Primary), 14R20, 20G35, 32M05, 32M17, 32Q56 (Secondary).}

\maketitle
\tableofcontents
\begin{abstract}
    We generalize a criterion for the density property of Stein manifolds. As an application, we give a new, simple proof of the fact that the Danielewski surfaces have the algebraic density property. Furthermore, we have found new examples of Stein manifolds with the density property.
\end{abstract}

\section{Introduction}
We say that a Stein manifold $X$ has the \textit{density property}, if the Lie algebra generated by the $\C$-complete holomorphic vector fields on $X$ is dense in the Lie algebra of all holomorphic vector fields on $X$.
The density property is a precise way to require that the holomorphic automorphism group of a Stein manifold has to be very large. In particular, the so-called Andersén--Lempert Theorem is valid for any Stein manifold $X$ with the density property. It provides a Runge-type approximation theorem for holomorphic deformations of a domain $\Omega \subseteq X$ by holomorphic automorphisms of $X$.
For an overview of the density property and its wealth of applications in solving holomorphic problems of geometric nature we refer to the overview articles of the fourth author with Kaliman \cite{KK:onthepresent}, with Forstneri\v c \cite{FoK:thefirst} and to his chapter in \cite{K:manifoldswith}.

The notion of the density property was introduced by Varolin, who himself \cite{Varolin} and in collaboration with T\'{o}th \cites{VT:holomorphicDiffeos, VT:holomorphicDiffeosSemisimple} came up with the first classes of manifolds admitting the density property: $\C^n$ for $n\geq 2$, and homogeneous spaces of semi-simple complex Lie groups with trivial center. 

In \cite{KalimanKutzschebauch} Kaliman and the fourth author worked out a strategy to find sufficient conditions whether a given Stein manifold $X$ has the density property. This enabled them to enlarge the classes of manifolds with density property substantially \cites{FoK:thefirst, KK:densitypropertyand volume}: all homogeneous spaces of linear algebraic groups except $\C$ and $(\C^*)^n$. Roughly speaking, the idea is based on finding sufficiently many so-called \textit{compatible pairs} of $\C$-complete holomorphic vector fields.
They also established the algebraic density property for Danielewski surfaces \cite{KalKutHyperSurface}*{Theorem 1} despite the lack of compatible pairs in this particular case. 

In this article, we generalize the notion of the compatible pair and the Kaliman--Kutzschebauch criterion. Given $\C$-complete holomorphic vector fields $\theta_1,\dots,\theta_n$ on $X$ we call the ordered $n$-tuple $(\theta_1,\dots,\theta_n)$ $n$-compatible if the following two conditions are satisfied:
\begin{enumerate}[label=(\arabic*)]
    \item There exists a non-zero ideal $I\subset \ringofholo$ in the ring of holomorphic functions on $X$ such that
    \[ I \subset \overline{\mathrm{span}\left( \prod_{i=1}^n \ker \theta_i\right)} \]
    \item There is a graph $(G,\pi,\epsilon)$, where
    \begin{enumerate}[label=(\roman*)]
        \item $G$ is a rooted directed tree with orientation towards the root
        \item $\pi\colon\mathrm{Vert}(G)\to \{\theta_1,\dots,\theta_n\}$ is a bijection with $\pi(\mathrm{root})=\theta_1$
        \item $\epsilon\colon \mathrm{Edges}(G)\to \ringofholo$ is a mapping with
        $$ \epsilon(v,w)\in \left(\ker\pi(v)^2\setminus \ker\pi(v)\right)\cap \ker\pi(w). $$
    \end{enumerate}
\end{enumerate}
The above-mentioned Danielewski surfaces do not admit a compatible pair, but they admit a compatible 3-tuple according to our new definition. As we will see below, this enables us to give a more concise proof of the density property for Danielewski surfaces. Moreover, we have discovered a manifold, which we call a Gromov--Vaserstein fiber, for which we do not know whether there exist compatible pairs or not, but which turns out to have the density property thanks to the existence of a compatible 3-tuple.

Let $\mathrm{Aut}_{hol}(X)$ denote the group of holomorphic automorphisms on $X$. Then the first main result can be formulated as follows.
\begin{thm}[Generalized Kaliman--Kutzschebauch criterion]\label{theorem:KalKut}
Let $X$ be a homogeneous Stein mani\-fold with respect to $\mathrm{Aut}_{hol}(X)$. Assume there exist $N=\dim(X)$ compatible $n$-tuples $\{(\theta_{1,i},\dots,\theta_{n,i})\}_{i=1}^N$ such that the vectors $(\theta_{1,1})_x,\dots,(\theta_{1,N})_x$ span the tangent space $T_xX$ at some point $x\in X$. Then $X$ has the density property.    
\end{thm}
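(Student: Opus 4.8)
\noindent\emph{Proof strategy.}
The plan is to deduce the density property from a submodule statement, to use homogeneity to turn a submodule of the generated Lie algebra that is full at one point into one that is full everywhere, and to produce such a submodule by processing each tree with an inductive Lie-bracket computation that generalises the compatible-pair argument of Kaliman and Kutzschebauch.

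Write $\mathcal{L}$ for the Lie algebra generated by the $\mathbb{C}$-complete holomorphic vector fields on $X$ and $\overline{\mathcal{L}}\subseteq\mathrm{VF}(X)$ for its closure; the density property is the equality $\overline{\mathcal{L}}=\mathrm{VF}(X)$. Pushforward by a holomorphic automorphism preserves $\mathbb{C}$-completeness and is a continuous automorphism of the topological Lie algebra $\mathrm{VF}(X)$, so $\overline{\mathcal{L}}$ is invariant under $\mathrm{Aut}_{hol}(X)$. Since $X$ is homogeneous, it therefore suffices to exhibit an $\mathcal{O}(X)$-submodule $M\subseteq\overline{\mathcal{L}}$ that generates $T_xX$ at a single point $x$: then $\widetilde{M}:=\sum_{\varphi\in\mathrm{Aut}_{hol}(X)}\varphi_\ast M$ is an $\mathcal{O}(X)$-submodule of $\overline{\mathcal{L}}$ generating the tangent space at every point of $X$, and such a submodule of $\mathrm{VF}(X)$ coincides with $\mathrm{VF}(X)$ by a standard argument with coherent sheaves based on Cartan's Theorems~A and~B.

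The module $M$ will be assembled from the compatible $n$-tuples one at a time, so the core task is: for a single compatible $n$-tuple $(\theta_1,\dots,\theta_n)$ with ideal $I$ and tree $(G,\pi,\epsilon)$, produce a nonzero $c\in\mathcal{O}(X)$ with $c\,I\,\theta_1\subseteq\overline{\mathcal{L}}$. Granting this --- with data $(c_i,I_i,\theta_{1,i})$ for the $i$-th tuple --- put $M:=\sum_{i=1}^N(c_iI_i)\,\theta_{1,i}$; the locus where $(\theta_{1,1})_y,\dots,(\theta_{1,N})_y$ are linearly independent is open and, by hypothesis, nonempty, while $\bigcup_i V(c_iI_i)$ is a proper analytic subset of the connected Stein manifold $X$, so there is a point $x$ in the former and outside the latter, and then $M_x=T_xX$. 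For the core task itself, observe that by condition~(1) and continuity of the map $g\mapsto g\,\theta_1$ it is enough to find a nonzero $c$ with $c\cdot\mathrm{span}\bigl(\prod_i\ker\theta_i\bigr)\cdot\theta_1\subseteq\overline{\mathcal{L}}$. I would get this by induction over the edges of $G$: the base case is the root alone, where $f\,\theta_1$ is $\mathbb{C}$-complete for every $f\in\ker\theta_1$ and $\ker\theta_1$ is a ring; an inductive step attaches a new vertex $v$ along an edge $(v,w)$ with edge function $h:=\epsilon(v,w)\in(\ker\pi(v)^2\setminus\ker\pi(v))\cap\ker\pi(w)$, $h':=\pi(v)(h)\in\ker\pi(v)\setminus\{0\}$, and multiplies the module established for the smaller tree by the extra factor $\ker\pi(v)$. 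The engine of the inductive step is a Lie-bracket identity along the edge, namely $[\,F\,\pi(v),\ hg\,\pi(w)\,]-h\,[\,F\,\pi(v),\ g\,\pi(w)\,]=h'\,g\,F\,\pi(w)$, valid for any $F,g\in\mathcal{O}(X)$ because $\pi(v)(h)=h'$; here the replica $F\,\pi(v)$ (with $F\in\ker\pi(v)$) and the field $hg\,\pi(w)$ (with $g\in\ker\pi(w)$, so $hg\in\ker\pi(w)$) are $\mathbb{C}$-complete, and one uses the identity to move information from the module over $\pi(v)$ to the module over $\pi(w)$, picking up the nonzero factor $h'$. Iterating over all edges of $G$ produces $c$ as a product of the nonzero functions $\pi(v)(\epsilon(v,w))$ over the edges.

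The main obstacle is this edge computation together with the bookkeeping that makes it iterate over a (possibly branching) tree. Already for $n=2$ --- the compatible-pair case --- the delicate point is that the term $h\,[\,F\,\pi(v),\,g\,\pi(w)\,]$ in the identity above is a holomorphic function times an element of $\overline{\mathcal{L}}$, which is \emph{not} obviously again in $\overline{\mathcal{L}}$; that it is uses the special structure of $h$ (namely $h\in\ker\pi(w)$, and the flow of the $\mathbb{C}$-complete field $\pi(v)$ carries $h$ only to $h+t\,h'$), the closedness of $\overline{\mathcal{L}}$, and a passage to one-parameter families of $\mathbb{C}$-complete fields --- this is the technical heart of the Kaliman--Kutzschebauch criterion and has to be reproduced here. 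For general $n$ one must, in addition, organise the induction so that the contributions coming along the several edges out of a vertex can be multiplied together; this forces the inductive hypothesis to record modules over \emph{every} vertex field $\pi(u)$, not only over the root field $\theta_1$, which is precisely why the definition of a compatible $n$-tuple ties $\epsilon(v,w)$ to the kernel of the parent field $\pi(w)$ and organises the data as a rooted tree rather than a chain. Once this is carried out, the homogeneity--Cartan reduction and the generic choice of the base point $x$ are routine.
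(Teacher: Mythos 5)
Your overall architecture does coincide with the paper's: reduce, via homogeneity and the Kaliman--Kutzschebauch submodule criterion (Theorem \ref{theorem:KalKutHolomorph}), to producing for each compatible tuple a nonzero module $c_i I_i\,\theta_{1,i}\subseteq\overline{\Lie}$, then sum these and choose a generic base point (this is Theorems \ref{theorem:submodule} and \ref{theorem:generalizedTheorem2}). The genuine gap is in the engine of your tree induction. Your edge identity $[F\pi(v),hg\pi(w)]-h\,[F\pi(v),g\pi(w)]=\pi(v)(h)\,gF\,\pi(w)$ is correct, but with this grouping the term $h\,[F\pi(v),g\pi(w)]$ is a holomorphic function times an element of $\overline{\Lie}$, and you never establish that it lies in $\overline{\Lie}$; you only announce that this ``technical heart'' must be reproduced via flows, one-parameter families and closedness. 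Producing function multiples of elements of $\overline{\Lie}$ inside $\overline{\Lie}$ is precisely what the whole theorem is struggling to achieve, so this step cannot be deferred, and the mechanism you gesture at is not the one the Kaliman--Kutzschebauch argument uses: no limiting or flow argument is needed here at all.

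The repair is to group the same two brackets differently, as in Lemma \ref{lemma:LieAlgebra1} and Proposition \ref{prop:rootInLieAlgebra}. With $h\in\left(\ker\pi(v)^2\setminus\ker\pi(v)\right)\cap\ker\pi(w)$ one has $[hF\pi(v),g\pi(w)]-[F\pi(v),hg\pi(w)]=-\pi(v)(h)\,gF\,\pi(w)$, and now every bracket entry is already known to belong to the Lie algebra: $g\pi(w)$ and $hg\pi(w)$ are complete because $g,hg\in\ker\pi(w)$; $F\pi(v)$ lies in $\overline{\Lie}$ by the induction hypothesis for the subtree below $v$; and $hF\pi(v)$ is obtained not by multiplying an element of $\overline{\Lie}$ by $h$ a posteriori, but by absorbing $h$ into the root field of that subtree before running the induction (the lemma preceding Corollary \ref{cor:admissSubtree}: $\ker(h\pi(v))=\ker\pi(v)$ keeps the tree admissible; at a leaf one uses that $hF\in\ker\pi(v)^2$, so $hF\pi(v)$ is complete). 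This is exactly why the edge function is required to have degree one with respect to the child field, and it keeps the entire construction purely algebraic; closures enter only through condition (1) and through Theorem \ref{theorem:KalKutHolomorph}. The surrounding steps of your proposal (Aut-invariance of $\overline{\Lie}$, the sum $M=\sum_i c_iI_i\,\theta_{1,i}$, openness of the spanning condition and avoidance of the zero locus) do match the paper and are fine, but without replacing your edge computation by a grouping of this kind the induction is incomplete already in the compatible-pair case $n=2$.
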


In practice it could be difficult to find $\dim(X)$ compatible $n$-tuples. The next result transforms this condition into another form for an application of Theorem \ref{theorem:KalKut}.
\begin{thm}\label{theorem:SufficientConditionsDP}
    Let $X$ be a Stein manifold and $(\theta_1,\dots,\theta_n)$ a compatible $n$-tuple. Assume that there exist $\C$-complete holomorphic vector fields $V_1,\dots,V_N$ on $X$, $N\geq \dim(X)$, which span the tangent bundle $TX$. If there exist functions $f_i\in \ker V_i$ such that $f_i(x)=0$ and $d_{x}f_i(\theta_1)\neq 0$ for some point $x\in X$ and $i = 1, \dots, N$, then $X$ has the density property.
\end{thm}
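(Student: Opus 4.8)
The plan is to reduce everything to the Generalized Kaliman--Kutzschebauch criterion, Theorem~\ref{theorem:KalKut}. Two things have to be arranged: first, that $X$ is homogeneous with respect to $\mathrm{Aut}_{hol}(X)$; and second, that starting from the single compatible $n$-tuple $(\theta_1,\dots,\theta_n)$ one can manufacture $\dim(X)$ compatible $n$-tuples whose first members, evaluated at the given point $x$, span $T_xX$.

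Homogeneity is the easy half. Since $V_1,\dots,V_N$ are $\C$-complete and span $TX$, for every $p\in X$ the map $(t_1,\dots,t_N)\mapsto \varphi^{t_1}_1\circ\dots\circ\varphi^{t_N}_N(p)$, where $\varphi^t_j$ denotes the flow of $V_j$, has differential $(t_j)\mapsto\sum_j t_j V_j(p)$ at the origin, which is surjective onto $T_pX$; hence its image contains a neighbourhood of $p$. Thus the orbits of the group generated by the flows of the $V_j$ are open, and since $X$ is connected there is a single orbit; in particular $X$ is homogeneous with respect to $\mathrm{Aut}_{hol}(X)$.

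For the second half I would proceed as follows. One first checks that $\mathrm{Aut}_{hol}(X)$ acts on the set of compatible $n$-tuples: for $\phi\in\mathrm{Aut}_{hol}(X)$, the pushed-forward tuple $(\phi_*\theta_1,\dots,\phi_*\theta_n)$ consists of $\C$-complete fields (conjugation preserves $\C$-completeness), and conditions~(1)--(2) of the definition hold with $I$ replaced by $\phi_*I$, every $\ker\theta_j$ by $\ker(\phi_*\theta_j)=\phi_*(\ker\theta_j)$, and the graph datum $(G,\pi,\epsilon)$ by $(G,\phi_*\circ\pi,\phi_*\circ\epsilon)$; here one uses that $\phi_*$ is a topological algebra automorphism of $\mathcal{O}(X)$ with $\phi_*(\ker\theta^2)=\ker((\phi_*\theta)^2)$. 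Next, fix $i\in\{1,\dots,N\}$. Because $f_i\in\ker V_i$, the function $f_i$ is constant along the flow lines of $V_i$, so $f_iV_i$ is again $\C$-complete, with flow $\psi^s_i(p)=\varphi^{\,s f_i(p)}_i(p)$, $s\in\C$; in particular $\psi^s_i\in\mathrm{Aut}_{hol}(X)$. Hence every $\Theta^s_i:=((\psi^s_i)_*\theta_1,\dots,(\psi^s_i)_*\theta_n)$ is a compatible $n$-tuple, and its first member at $x$ traces an entire curve $g_i\colon\C\to T_xX$, $g_i(s)=((\psi^s_i)_*\theta_1)_x$, with $g_i(0)=(\theta_1)_x$ and, using $f_i(x)=0$,
\[
g_i'(0)=-[\,f_iV_i,\theta_1\,]_x=\theta_1(f_i)(x)\cdot V_i(x)=d_xf_i(\theta_1)\cdot V_i(x),
\]
which by hypothesis is a nonzero multiple of $V_i(x)$.

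It remains to harvest a spanning set. Let $W\subseteq T_xX$ be the linear span of $\{g_i(s):1\le i\le N,\ s\in\C\}$. Each curve $g_i$ takes values in the linear subspace $W$, so $g_i'(0)\in W$, and by the displayed formula $V_i(x)\in W$ for every $i$; since the $V_i(x)$ span $T_xX$, this forces $W=T_xX$. Choosing $\dim(X)$ of the vectors $g_i(s)$ that form a basis of $T_xX$, the associated compatible $n$-tuples $\Theta^s_i$ satisfy the hypothesis of Theorem~\ref{theorem:KalKut}, and the density property of $X$ follows. I expect no step to be a genuine obstacle: homogeneity is formal, the key deformations $\psi^s_i$ exist precisely because $f_i\in\ker V_i$, and the extraction of a basis is elementary linear algebra; the one point that needs (routine) care is the verification that compatibility of an $n$-tuple is preserved under holomorphic automorphisms.
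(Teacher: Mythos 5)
Your proposal is correct, and in substance it is the paper's own argument: the heart in both cases is that the time-$s$ maps of the complete fields $f_iV_i$ fix $x$ (because $f_i(x)=0$) and, by Lemma \ref{calculation}, send $(\theta_1)_x$ to $(\theta_1)_x + s\, d_xf_i(\theta_1)\,(V_i)_x$, so that the spanning of $T_xX$ by the $(V_i)_x$ together with $d_xf_i(\theta_1)\neq 0$ produces a generating set at $x$; homogeneity comes from flexibility via the open-orbit argument, exactly as you say. The difference is only in the packaging. The paper (Lemma \ref{lemma:theorem2}) transports the $\ringofholo$-submodule $L=J\cdot(f\theta_1)$, obtained from the compatible $n$-tuple via the holomorphic version of Theorem \ref{theorem:submodule}, by pulling it back along these flows and summing, and then applies Theorem \ref{theorem:KalKutHolomorph}; this only uses that $\overline{\Lie}$ is invariant under automorphisms. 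You instead transport the compatible $n$-tuple itself and invoke Theorem \ref{theorem:KalKut} as a black box, which forces the extra (correct and indeed routine) verification that compatibility is preserved under push-forward by elements of $\mathrm{Aut}_{hol}(X)$ --- the point being that $f\mapsto f\circ\phi^{-1}$ is a continuous algebra automorphism of $\ringofholo$, so condition (1) with its closure, the ideal, and the admissible tree all carry over. Also note that by Lemma \ref{calculation} your curves $g_i$ are affine in $s$, so the passage through derivatives and the span $W$ can be shortcut: $g_i(s)=(\theta_1)_x+s\,d_xf_i(\theta_1)(V_i)_x$ already exhibits spanning vectors for suitable $s$. Both routes prove the theorem; yours leans on the tuple-level criterion, the paper's stays at the submodule level and is marginally more economical.
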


The reason for us to develop this new criterion is to prove the density property for a certain class of
affine algebraic manifolds, naturally arising in the solution to the factorization problems of holomorphic matrices into elementary factors, namely the solutions to the Gromov--Vaserstein problems for the special linear groups by Ivarsson and Kutzschebauch \cite{SLnPaper}, and for the symplectic groups by Schott \cite{Schott}.  These manifolds are the fibers of certain polynomial mappings $P\colon\C^m\to \C^k$, we call them  Gromov--Vaserstein fibers.
They share the property that each smooth fiber $P^{-1}(y)$ is biholomorphic to a product $\mathcal{G}\times \C^M$ for some holomorphically flexible Stein manifold $\mathcal{G}$ and some natural number $M$. In \cite{SLnPaper} and \cite{Schott} it is shown that $\mathcal{G}$ is elliptic in the sense of Gromov and thus an Oka manifold. For the notion of Oka manifolds see \cite{ForstnericBook}*{Chapter 5}. Our main goal is to show that $\mathcal{G}$ has the density property which is much stronger than being Oka. 

Note that it is much easier to prove that the Gromov--Vaserstein fibers $\mathcal{G}\times \C^M$ have the density property for $M\geq 1$. In fact, an application of the original Kaliman--Kutzschebauch criterion shows that products of holomorphically flexible manifolds with affine spaces have the density property. Since this result is known in the literature only in a very restricted form (Varolin \cite{Varolin} proved that $G\times \C$ has the density property when $G$ is a Stein complex Lie group) we include it here. 

Recall that manifold $X$ is called (\textit{holomorphically}) \textit{flexible} at a point $x\in X$ if the values at $x$ of $\C$-complete holomorphic vector fields on $X$ span the tangent space $T_xX$. The manifold $X$ is \textit{flexible} if it is flexible at every point $x\in X$.

\begin{thm}\label{theorem:holFlexDP}
    Let $X$ be a holomorphically flexible Stein manifold. Then $X\times \C$ has the density property.
\end{thm}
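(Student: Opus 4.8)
The plan is to apply Theorem~\ref{theorem:SufficientConditionsDP} to the Stein manifold $M := X\times\C$, with $t$ the coordinate on the second factor; I assume $\dim X\geq 1$ (for $\dim X=0$ one would get $M=\C$, which has no density property). First I would draw out the two consequences of flexibility that are needed. Since the $\C$-complete holomorphic vector fields span every tangent space of $X$, they in particular generate the finite-rank holomorphic vector bundle $TX$ over the Stein manifold $X$, so already finitely many of them, say $U_1,\dots,U_m$ with $m\geq\dim X$, span $TX$ at every point; after relabelling I may assume $U_1(x_0)\neq 0$ for a fixed $x_0\in X$. Write $\phi^j$ for the globally defined, jointly holomorphic flow of $U_j$, write $\tilde U_j$ for the lift of $U_j$ to $M$ that is trivial in $t$, and set $\hat U_j:=\tilde U_j+\partial_t$. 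Each of $\tilde U_j$, $\hat U_j$ and $\partial_t$ is complete on $M$; in particular the flow of $\hat U_j$ is $(x,t)\mapsto(\phi^j_s(x),\,t+s)$.

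The compatible $n$-tuple will be the pair $(\theta_1,\theta_2):=(\tilde U_1,\,\partial_t)$. For condition~(1): $\ker\tilde U_1$ contains all functions of $t$ alone and $\ker\partial_t$ all functions of $x$ alone, so $\mathrm{span}(\ker\tilde U_1\cdot\ker\partial_t)$ contains every product $q(t)a(x)$ and is therefore dense in $\mathcal{O}(M)$ because $X$ is Stein; thus the whole ring $I:=\mathcal{O}(M)$ works. For condition~(2): take the two-vertex tree whose single edge is oriented towards the root, put $\theta_1$ at the root, $\theta_2$ at the leaf, and $\epsilon:=t$ on the edge; indeed $\partial_t^2 t=0$, $\partial_t t=1\neq 0$ and $\tilde U_1 t=0$.

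For the spanning family I would take $V_j:=\hat U_j$ for $j=1,\dots,m$ together with $V_{m+1}:=\partial_t$; these are $m+1\geq\dim M$ complete fields, and as $\hat U_j-\partial_t=\tilde U_j$ their span at any $(x,t)$ equals $\mathrm{span}\{(U_1)_x,\dots,(U_m)_x\}\oplus\C\partial_t=T_{(x,t)}M$, so they span $TM$. The point that makes the last hypothesis of Theorem~\ref{theorem:SufficientConditionsDP} available is that $\ker\hat U_j$ is large even if $U_j$ has essentially no nonconstant first integrals on $X$: for every $g\in\mathcal{O}(X)$ the function $(x,t)\mapsto g(\phi^j_{-t}(x))$ lies in $\ker\hat U_j$. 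Hence for $1\leq j\leq m$ I set $f_j(x,t):=g_j(\phi^j_{-t}(x))$, so $f_j(x_0,0)=g_j(x_0)$ and $d_{(x_0,0)}f_j(\tilde U_1)=d_{x_0}g_j\big((U_1)_{x_0}\big)$; since $X$ is Stein and $(U_1)_{x_0}\neq 0$, I may choose $g_j\in\mathcal{O}(X)$ with $g_j(x_0)=0$ and with $d_{x_0}g_j$ a prescribed covector not annihilating $(U_1)_{x_0}$. For the last field I take $f_{m+1}\in\ker\partial_t=\mathcal{O}(X)$ with $f_{m+1}(x_0)=0$ and $d_{x_0}f_{m+1}\big((U_1)_{x_0}\big)\neq 0$. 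With the point $p:=(x_0,0)$ all hypotheses of Theorem~\ref{theorem:SufficientConditionsDP} hold, and it yields the density property of $X\times\C$.

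The only slightly delicate step is the extraction of a finite family $U_1,\dots,U_m$ of complete fields spanning $TX$ out of mere flexibility; this is a standard fact for finite-rank holomorphic vector bundles over Stein manifolds and should be stated explicitly. Conceptually, however, the engine of the argument is the field $\hat U_j=\tilde U_j+\partial_t$: shearing the $\C$-direction against the $\C$-action generated by $U_j$ enlarges its kernel exactly enough to satisfy the last (and only mildly restrictive) hypothesis of Theorem~\ref{theorem:SufficientConditionsDP}, even for a manifold such as $X=\C^*$ whose only complete vector fields have trivial kernel.
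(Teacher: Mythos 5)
Your overall mechanism is sound and is in fact close to the paper's: the compatible pair is the same one the paper uses (the extension of a complete field $V$ on $X$ together with $\partial_t$, linked by the function $t$), and your fields $\hat U_j=\tilde U_j+\partial_t$ with the invariant functions $g_j\circ\phi^j_{-t}$ implement exactly the shear trick that the paper performs by pulling the pair back under the time-one flow of $f\,\partial_t$ (Lemma \ref{calculation}). However, there is one genuine gap: the very first step, where you extract from flexibility finitely many complete fields $U_1,\dots,U_m$ spanning $T_xX$ at \emph{every} point of $X$, calling this ``a standard fact for finite-rank holomorphic vector bundles over Stein manifolds.'' It is not. The standard generation results (Forster--Ramspott type) produce finitely many generating sections that are $\mathcal{O}(X)$-linear combinations of the given family, and such combinations of complete fields are in general no longer complete; and a family of sections that spans every fibre need not contain a finite spanning subfamily at all (on $X=\C$ take entire functions $f_k$ vanishing precisely on $\mathbb{Z}_{>0}\setminus\{k\}$: they generate the trivial line bundle everywhere, but every finite subfamily has a common zero). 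Whether a flexible Stein manifold always admits finitely many complete fields spanning $TX$ globally is not known to follow from flexibility, so as written the hypotheses of Theorem \ref{theorem:SufficientConditionsDP} (which demands finitely many complete fields spanning the whole tangent bundle) are not verified.

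The gap is easily repaired, because the full strength of that hypothesis is never needed: in the proof of Theorem \ref{theorem:SufficientConditionsDP} the global spanning is only used to get flexibility, hence $\mathrm{Aut}_{hol}$-homogeneity, and homogeneity of $X\times\C$ already follows from flexibility of $X$ using \emph{all} complete fields together with $\partial_t$. So instead of quoting Theorem \ref{theorem:SufficientConditionsDP} verbatim, invoke Lemma \ref{lemma:theorem2} (which only requires the finitely many complete fields $V_1,\dots,V_N$ to span the tangent space at the single point $p=(x_0,0)$ --- and finitely many complete fields spanning $T_{x_0}X$ exist trivially by flexibility at $x_0$) together with Theorem \ref{theorem:KalKutHolomorph}. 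With that adjustment, your compatible pair, your choice of $\hat U_j$ and of the functions $f_j(x,t)=g_j(\phi^j_{-t}(x))$, and the verification $d_{(x_0,0)}f_j(\tilde U_1)=d_{x_0}g_j\bigl((U_1)_{x_0}\bigr)\neq 0$ are all correct, and the argument goes through; this is then essentially the paper's proof in a slightly different packaging.
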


The reader should compare this theorem to the result of Ugolini and Winkelmann \cite{UgoWink}, where they prove that the total space of a line bundle $\pi\colon E\to X$ over a Stein manifold $X$ with the density property has again the density property, if there exists a so-called $\pi$-incompatible holomorphic automorphism of $E$.

 As mentioned above, the manifolds $\mathcal{G}$ from the Gromov--Vaserstein fiber $\mathcal{G} \times \C^M$
 are our new examples of manifolds with the density property. We shall encounter an interesting special case,  where $\mathcal{G}$ is given by
\[ \mathcal{G}=\left\lbrace (z_2,z_3,w_1,w_2,w_3)\in \C^5: \begin{pmatrix}
    w_1 & w_2 \\ w_2 & w_3
\end{pmatrix} \begin{pmatrix}
    z_2 \\ z_3
\end{pmatrix} = \begin{pmatrix}
    b_1\\ b_2
\end{pmatrix}\right\rbrace\]
with $(b_1,b_2)\in \C^2\setminus \{(0,0)\}.$ We do not know whether there exist compatible pairs on $\mathcal{G}$ or not. However, there is a compatible $3$-tuple and, moreover, there are $\C$-complete holomorphic vector fields $V_1,V_2$ and $V_3$ on $\mathcal{G}$ which satisfy the conditions of Theorem \ref{theorem:SufficientConditionsDP}. Hence $\mathcal{G}$ has the density property.

\bigskip
 
 This paper is organised as follows. In Section 2, we prove the algebraic version of Theorem \ref{theorem:KalKut}. The same proof works in the holomorphic setting after minor adjustments. The basic idea is to find a suitable $\ringofholo$-submodule $L$ of $\holiealg$, the Lie algebra of all holomorphic vector fields on $X$, such that $L$ is contained in the closure $\overline{\Lie}$ of the Lie algebra $\Lie$ generated by the $\C$-complete holomorphic vector fields on $X$. 

 In Section 3, we turn our attention to the proof of  Theorems \ref{theorem:SufficientConditionsDP} and \ref{theorem:holFlexDP}.  
 
In Section 4, we consider applications of Theorems \ref{theorem:KalKut} and \ref{theorem:SufficientConditionsDP}. First, we give a new proof for the fact that the Danielewski surfaces have the algebraic density property. And then, we prove the density property for the manifolds arising in the Gromov--Vaserstein fibration. These manifolds are 
in the symplectic case given as the common zero set of an arbitrary big number of polynomial equations. This makes it impossible to prove the density property by direct calculation as in the
groundbreaking works of Anders\'en--Lempert \cite{AL} and Varolin \cite{Varolin}.


\section{Generalization of the Kaliman--Kutzschebauch criterion}
We will prove the algebraic version of Theorem \ref{theorem:KalKut}. Let $X$ be an algebraic manifold. We let $\algliealg$ denote the Lie algebra of all algebraic vector fields on $X$ and $\algliealgint$ the set of $\C$-complete algebraic vector fields on $X$. Let $\Liealg$ denote the Lie algebra generated by $\algliealgint$.

\begin{defi}[\cite{Varolin}]
    An algebraic manifold $X$ has the algebraic density property if $$\Liealg = \algliealg.$$
\end{defi}

For an algebraic manifold $X$ we let $\mathbb{C}[X]$ denote the algebra of regular functions on $X$ and $\mathrm{Aut}_{alg}(X)$ the group of all algebraic automorphisms of $X$.
\begin{defi}[\cite{KalimanKutzschebauch}]
    Let $X$ be an algebraic manifold and $x_0\in X$. A finite subset $M$ of the tangent space $T_{x_0}X$ is called a generating set if the image of $M$ under the action of the isotropy subgroup of $x_0$ (in $\mathrm{Aut}_{alg}(X)$) spans $T_{x_0}X$ as a complex vector space. 
\end{defi}

Kaliman and Kutzschebauch \cite{KalimanKutzschebauch}*{Theorem 1} proved the following
\begin{theorem}\label{theorem:KalimanKutzschebauch}
    Let $X$ be an affine algebraic manifold that is $\mathrm{Aut}_{alg}(X)$-homogeneous. Let $L$ be a $\mathbb{C}[X]$-submodule of $\algliealg$ such that $L\subset \Liealg$. If the fiber $L_{x_0}=\{V_{x_0}: V\in L\}\subset T_{x_0}X$ over some point $x_0\in X$ contains a generating set, then $X$ has the algebraic density property.
\end{theorem}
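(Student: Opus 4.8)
The plan is to enlarge the given module $L$ to a \emph{saturated} module $\tilde L$ that is invariant under all of $\mathrm{Aut}_{alg}(X)$, is still contained in $\Liealg$, and has full fibre $\tilde L_x=T_xX$ at every point; a Nakayama-type argument, exploiting that $X$ is affine, then forces $\tilde L=\algliealg$, and hence $\Liealg=\algliealg$.

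First I would record that $\Liealg$ is invariant under $\mathrm{Aut}_{alg}(X)$. If $\Theta\in\algliealgint$ has (complex) flow $(\varphi_t)_{t\in\C}$ and $\phi\in\mathrm{Aut}_{alg}(X)$, then $\phi_*\Theta$ is again an algebraic vector field and its flow is $t\mapsto\phi\circ\varphi_t\circ\phi^{-1}$, defined for all $t\in\C$; thus $\phi_*$ preserves $\algliealgint$, and being a Lie-algebra isomorphism it preserves the Lie subalgebra $\Liealg$ generated by $\algliealgint$ (apply the same to $\phi^{-1}$ to get $\phi_*\Liealg=\Liealg$). Next I set $\tilde L:=\sum_{\phi\in\mathrm{Aut}_{alg}(X)}\phi_*L$, the $\regularfunc$-submodule of $\algliealg$ generated by all pushforwards of $L$. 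Each $\phi_*L$ is a $\regularfunc$-submodule, since $g\cdot\phi_*V=\phi_*\bigl((g\circ\phi)\,V\bigr)$ and $L$ is a $\regularfunc$-module; hence so is $\tilde L$. By the previous remark and $L\subseteq\Liealg$ we get $\tilde L\subseteq\Liealg\subseteq\algliealg$, and $\tilde L$ is by construction $\mathrm{Aut}_{alg}(X)$-invariant.

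Now I use the generating set. Let $M\subset L_{x_0}\subset T_{x_0}X$ be the generating set, and for each $m\in M$ pick $V^m\in L$ with $V^m_{x_0}=m$. For $\phi$ in the isotropy group $\mathrm{Iso}(x_0)$ one has $\phi_*V^m\in\tilde L$ and $(\phi_*V^m)_{x_0}=d_{x_0}\phi\,(V^m_{x_0})=d_{x_0}\phi\,(m)$, so $\tilde L_{x_0}\supseteq\operatorname{span}\{\,d_{x_0}\phi\,(m):\phi\in\mathrm{Iso}(x_0),\,m\in M\,\}=T_{x_0}X$ by the definition of a generating set. Since $\tilde L$ is $\mathrm{Aut}_{alg}(X)$-invariant and $X$ is $\mathrm{Aut}_{alg}(X)$-homogeneous, conjugating by an automorphism carrying $x_0$ to an arbitrary point $x$ gives $\tilde L_x=T_xX$ for every $x\in X$. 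Finally, because $X$ is affine, $\algliealg=H^0(X,TX)$ is a finitely generated module over the Noetherian ring $\regularfunc$, and for each maximal ideal $\mathfrak m_x$ the natural map identifies $\algliealg\otimes_{\regularfunc}\regularfunc/\mathfrak m_x$ with $T_xX$, through which the image of $\tilde L$ is exactly $\tilde L_x=T_xX$. Hence the finitely generated $\regularfunc$-module $\algliealg/\tilde L$ has vanishing fibre at every closed point, so $\algliealg/\tilde L=0$ by Nakayama's lemma, i.e. $\algliealg=\tilde L\subseteq\Liealg$, which is the algebraic density property.

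The conceptual heart — and the step I expect to be the main obstacle to get exactly right — is the interplay of two facts: that \emph{completeness is preserved under conjugation by any algebraic automorphism}, which makes $\Liealg$ a genuinely $\mathrm{Aut}_{alg}(X)$-invariant object so that homogeneity can do the work of spreading the generating set over all of $X$; and the commutative-algebra bookkeeping at the end, where affineness and Noetherianity of $\regularfunc$ are essential to promote a pointwise statement about tangent vectors to the module equality $\tilde L=\algliealg$. (In the holomorphic situation of Theorem~\ref{theorem:KalKut} one works instead with the closure of $\Liealg$ and replaces the Nakayama step by Cartan's Theorems~A and~B, but the overall structure is the same.)
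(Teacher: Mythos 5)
Your proposal is correct, but note that the paper does not prove this statement at all --- it is quoted from Kaliman--Kutzschebauch \cite{KalimanKutzschebauch}*{Theorem 1} --- and your argument (summing the pushforwards $\phi_*L$ over $\mathrm{Aut}_{alg}(X)$ to get an invariant $\mathbb{C}[X]$-submodule of $\Liealg$ whose fibre is all of $T_xX$ at every point, then using that $\algliealg$ is a finitely generated module over the Noetherian ring $\mathbb{C}[X]$ with $\algliealg\otimes\mathbb{C}[X]/\mathfrak m_x\cong T_xX$ and applying Nakayama at every maximal ideal) is essentially the same as the original one in that reference. I see no genuine gap; the only points deserving the care you already gave them are that conjugation by an algebraic automorphism preserves completeness and hence $\Liealg$, and that smoothness and affineness of $X$ are what make the evaluation-fibre identification and the Nakayama step legitimate.
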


A similar result is true in the holomorphic setting. For a Stein manifold $X$, let $\holiealg$ denote the Lie algebra of all holomorphic vector fields on $X$ and $\Lie$ the Lie algebra generated by the complete holomorphic vector fields on $X$. 
\begin{theorem}\label{theorem:KalKutHolomorph}
    Let $X$ be a Stein manifold that is homogeneous with respect to $\mathrm{Aut}_{hol}(X)$ the group of holomorphic automorphisms on $X$. Let $L$ be a $\ringofholo$-submodule of $\holiealg$ such that $L$ is contained in the closure of $\Lie$. If the fiber $L_{x_0}=\{V_{x_0}: V\in L\}\subset T_{x_0}X$ over some point $x_0\in X$ contains a generating set, then $X$ has the density property.
\end{theorem}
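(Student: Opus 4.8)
The plan is to mirror the proof of the algebraic criterion, Theorem~\ref{theorem:KalimanKutzschebauch}, replacing the two features special to the algebraic world — quasi-compactness of the Zariski topology, and the Noetherian/coherence argument upgrading a full-fibre submodule to the whole module of vector fields — by, respectively, summing over the whole automorphism group at once and invoking Cartan's Theorem~B together with the Oka--Weil approximation theorem. Two elementary properties of $\overline{\Lie}$ will be used repeatedly: it is a closed $\C$-linear subspace of $\holiealg$, and it is mapped into itself by the push-forward $\phi_*$ of every $\phi\in\mathrm{Aut}_{hol}(X)$. The latter holds because $\phi_*$ carries $\C$-complete holomorphic vector fields to $\C$-complete ones (it conjugates their $\C$-flows), is a homomorphism of Lie algebras, and is continuous for the topology of locally uniform convergence; hence $\phi_*\Lie\subseteq\Lie$ and, by continuity, $\phi_*\overline{\Lie}\subseteq\overline{\Lie}$. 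Note that $\overline{\Lie}$ itself need not be an $\ringofholo$-module, which is exactly why the hypothesis is phrased through the auxiliary module $L$.

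The first real step is a spreading argument. Let $\Gamma\subseteq L_{x_0}\subseteq T_{x_0}X$ be the given generating set and $G_{x_0}\subseteq\mathrm{Aut}_{hol}(X)$ the isotropy subgroup at $x_0$. By the definition of a generating set, the vectors $d\phi_{x_0}(v)$ with $\phi\in G_{x_0}$, $v\in\Gamma$ span the finite-dimensional space $T_{x_0}X$; since $(\phi_*L)_{x_0}=d\phi_{x_0}(L_{x_0})\supseteq d\phi_{x_0}(\Gamma)$ for $\phi\in G_{x_0}$, finitely many $g_1,\dots,g_l\in G_{x_0}$ already give an $\ringofholo$-module $\tilde L:=\sum_{j=1}^{l}(g_j)_*L$ with $\tilde L_{x_0}=T_{x_0}X$, and $\tilde L\subseteq\overline{\Lie}$ because each $(g_j)_*L\subseteq\overline{\Lie}$ and $\overline{\Lie}$ is closed under finite sums. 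Next, using homogeneity, set $\widehat L:=\sum_{\phi\in\mathrm{Aut}_{hol}(X)}\phi_*\tilde L$, the $\ringofholo$-submodule of $\holiealg$ generated by all translates $\phi_*\tilde L$. Every element of $\widehat L$ is a finite sum of vector fields each lying in a single translate $\phi_*\tilde L\subseteq\overline{\Lie}$, so $\widehat L\subseteq\overline{\Lie}$; and for each $x\in X$, picking $\phi$ with $\phi(x_0)=x$ gives $(\phi_*\tilde L)_x=d\phi_{x_0}(\tilde L_{x_0})=d\phi_{x_0}(T_{x_0}X)=T_xX$, whence $\widehat L_x=T_xX$ for all $x\in X$.

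Then comes the density step: I claim $\widehat L$ is dense in $\holiealg$, which together with $\widehat L\subseteq\overline{\Lie}$ gives $\holiealg=\overline{\Lie}$, i.e.\ the density property. Fix $W\in\holiealg$ and an $\ringofholo$-convex compact $K\subseteq X$. Since $\widehat L_x=T_xX$ for every $x\in K$, each point of $K$ has a neighbourhood on which some $\dim X$ elements of $\widehat L$ span the tangent space at every point; by compactness of $K$ finitely many such families, comprising $V_1,\dots,V_N\in\widehat L$ in total, span $T_yX$ for every $y$ in an open set $U_0\supseteq K$. Choose a Stein open $U$ with $K\subseteq U\subseteq U_0$, which exists because $X$ is Stein and $K$ is $\ringofholo$-convex. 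On $U$ the $V_k$ generate the locally free sheaf of germs of holomorphic vector fields, so by Cartan's Theorem~B the equation $W=\sum_{k=1}^{N}g_kV_k$ is solvable with $g_k\in\mathcal{O}(U)$; by the Oka--Weil theorem each $g_k$ is approximated uniformly on $K$ by some $\hat g_k\in\ringofholo$, and then $\sum_{k}\hat g_kV_k$ lies in $\widehat L$ and is uniformly close to $W$ on $K$ (the $V_k$ being globally defined, hence bounded on $K$). Exhausting $X$ by $\ringofholo$-convex compacts shows $W\in\overline{\widehat L}$, as claimed.

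The main obstacle, and the only genuine departure from the algebraic proof, is this last passage from ``$\widehat L$ has full fibres everywhere'' to ``$\widehat L$ is dense''. In the algebraic case one covers $X$ by finitely many Zariski-open translates of the full-fibre locus and concludes, by coherence and Noetherianity, that the resulting module equals all of $\algliealg$; in the Stein case the analogous module need not be all of $\holiealg$, and one must instead use that $X$ is Stein, through Cartan's Theorem~B to write $W$ locally as a combination of global fields of $\widehat L$ and through Oka--Weil to globalize the coefficients. A second point requiring care, already flagged above, is that even though $\overline{\Lie}$ is not itself an $\ringofholo$-module, the module $\widehat L$ generated by all translates $\phi_*\tilde L$ still lies inside $\overline{\Lie}$, precisely because each of its elements is a finite sum of vector fields each belonging to one of the $\ringofholo$-modules $\phi_*\tilde L\subseteq\overline{\Lie}$.
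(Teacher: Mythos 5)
Your proof is correct. The paper itself gives no proof of this statement (it is quoted as the holomorphic counterpart of Theorem~\ref{theorem:KalimanKutzschebauch} from \cite{KalimanKutzschebauch}), and your argument --- spreading $L$ by the isotropy group and by automorphisms to obtain an $\ringofholo$-submodule inside $\overline{\Lie}$ with full fibres, then passing to finitely many spanning fields on a Stein neighbourhood of an $\ringofholo$-convex compact and using Nakayama, Cartan's Theorem~B and Oka--Weil approximation --- is essentially the standard proof of that result.
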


Kaliman and Kutzschebauch provide a strategy to construct such a submodule using so-called \textit{compatible pairs}. We will now generalize this idea, and for that we need some preparation.

\begin{defi}
    Let $S\subset \algliealg$ be a finite non-empty set. An $S$-admissible graph is a triple $(G,\pi,\epsilon)$, where
    \begin{enumerate}[label=(\roman*)]
        \item $G$ is a directed graph
        \item $\pi\colon\mathrm{Vert}(G)\to S$ is a bijective map
        \item $\epsilon\colon\mathrm{Edge}(G)\to \regularfunc$ such that
        $$ \epsilon(u,v) \in \left(\ker \pi(v)^2\setminus \ker\pi(v) \right)\cap \ker\pi(w).$$
    \end{enumerate}
\end{defi}

\begin{defi}
    An ordered $n$-tuple $(\theta_1,\dots,\theta_n)$ of complete vector fields on $X$ is compatible if
    \begin{enumerate}[label=(\roman*)]
        \item there exists a finite rooted $S$-admissible tree $(G,\pi,\epsilon)$ with $\theta_1=\pi(\mathrm{root})$ with $S=\{\theta_1,\dots,\theta_n\}$,
        \item there is a non-zero ideal $I\subset \regularfunc$ with
        $$ I \subset \mathrm{span}\left( \prod_{i=1}^n \ker \theta_i\right).$$
    \end{enumerate}
\end{defi}

\noindent \textbf{Convention:}\\
Given $S\subset \algliealg$ and an $S$-admissible graph $(G,\pi,\epsilon)$, we write $\theta$ for both the vector field $\theta \in S$ and the vertex $\pi^{-1}(\theta)\in \mathrm{Vert}(G)$. Moreover, if $\epsilon(\theta,\phi) \in \left(\ker\theta^2\setminus \ker\theta\right)\cap \ker\phi$, we write $$\relation{\theta}{\phi}{a_{\theta}},$$ where $a_{\theta}:=\epsilon(\theta,\phi)$. And finally, we sometimes write $f^\theta$ for regular functions with $f^\theta\in \ker\theta$. \\

Using the compatible $n$-tuples, we prove the existence of a $\C[X]$-submodule. We formulate this important intermediate step in the proof of Theorem \ref{theorem:KalKut} as follows.
\begin{theorem}\label{theorem:submodule}
    Let $\theta_1,\dots,\theta_n$ be complete algebraic vector fields on an algebraic manifold $X$. If $(\theta_1,\dots,\theta_n)$ is a compatible $n$-tuple, then there exists a $\mathbb{C}[X]$-submodule $L$ of $\algliealg$ with ${L\subset \mathrm{Lie}_{alg}(X)}$.
\end{theorem}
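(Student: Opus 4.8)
The plan is to build the $\C[X]$-submodule $L$ recursively along the rooted tree $(G,\pi,\epsilon)$, starting from the root $\theta_1$ and working outward (or, equivalently, collecting contributions from the leaves inward, since the orientation is towards the root). The basic building block is the classical Kaliman--Kutzschebauch construction for a single edge $\relation{\theta}{\phi}{a}$: from $a \in (\ker\phi^2 \setminus \ker\phi)\cap\ker\theta$ one forms, for any $h \in \ker\theta$, the complete vector fields $h\,\theta$ and $a\,\phi$ (these are $\C$-complete because $\theta,\phi$ are complete and $h\in\ker\theta$, $a\in\ker\phi$ ... wait, $a\in\ker\phi^2$, so $a\phi$ is complete as $\phi(a\phi)$-flow integrates; more precisely $\exp(t\,a\phi)$ is a well-defined algebraic flow since $a\in\ker\phi^2$ means $\phi(\phi(a))=0$). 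The Lie bracket $[h\theta, a\phi] = h\,\phi(a)\,\theta + \text{(terms annihilated by taking a second bracket)}$, and a second bracket with $a\phi$ kills the $\theta$-component while the surviving piece is a multiple of $\phi$ by an element of $\ker\theta$ times $\phi(a)^2$. Iterating the bracket with $h\theta$ and $a\phi$ appropriately (this is exactly the span-of-divergence-zero-pieces trick from \cite{KalimanKutzschebauch}) produces vector fields of the form $g\,\phi \in \Liealg$ for all $g$ in the ideal $\phi(a)^2\,(\ker\theta \cap \ker\phi)$, or something of that shape.

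The key step is then the recursion on the tree. For each vertex $\theta_j$ with parent $\theta_k$ along the edge $\relation{\theta_j}{\theta_k}{a_j}$ (using the convention in the paper, the edge function $a_{\theta_j}$ lies in $(\ker\theta_j^2\setminus\ker\theta_j)\cap\ker\theta_k$), I will show by induction from the leaves that there is a nonzero ideal $J_j \subset \C[X]$ such that $g\,\theta_j \in \Liealg$ for every $g \in J_j$. For a leaf this is immediate from the single-edge construction applied to its parent edge, combined with the fact that $\ker$ of a vector field contains at least the constants, or rather one takes $J_j$ generated by $\phi(a_j)^2$ times whatever ideal one already controls. For an interior vertex $\theta_j$ with children $\phi_1,\dots,\phi_r$, one already controls ideals $J_{\phi_1},\dots,J_{\phi_r}$ for the children; bracketing $g\,\phi_s$ (for $g \in J_{\phi_s}$, chosen additionally in $\ker\theta_j$ — here the hypothesis that edge functions lie in the kernel of the parent is what lets us arrange this, after multiplying by a suitable element) with the building blocks at the edge to the parent of $\theta_j$ propagates control up one level. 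At the root $\theta_1$ we obtain a nonzero ideal $J_1$ with $g\,\theta_1 \in \Liealg$ for all $g \in J_1$. Finally, condition (ii) of compatibility — the existence of a nonzero ideal $I \subset \mathrm{span}(\prod_{i=1}^n \ker\theta_i)$ — is what upgrades "control of $\theta_1$-multiples" to an actual submodule: one multiplies the root field $\theta_1$ (or the fields $g\theta_1$) by elements realizing products $\prod \ker\theta_i$, uses that each $\ker\theta_i$-factor can be absorbed by the corresponding node's construction, and concludes that $L := \C[X]\cdot\{V\in\Liealg : V = g\,\theta_1,\ g\in I'\}$ for an appropriate nonzero ideal $I'$ is a $\C[X]$-submodule of $\algliealg$ contained in $\Liealg$ — it is automatically a submodule because it is defined as $\C[X]$ times a set, and the content of the theorem is the containment $L \subset \Liealg$, which the recursion supplies.

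The main obstacle I anticipate is bookkeeping the ideals and the kernel conditions simultaneously through the recursion: at each interior vertex one needs the child-fields' coefficients to simultaneously lie in the child's controlled ideal \emph{and} in $\ker\theta_j$, and one must check these two requirements are compatible (i.e., their intersection is still nonzero and still large enough to feed the next bracket computation). This is where the precise form of condition (iii) of the admissible-graph definition — $\epsilon(u,v)$ lies not just in $\ker\pi(v)^2\setminus\ker\pi(v)$ but also in the kernel of the parent — is essential, and getting the quantifiers right (which functions range over which kernels, and in what order the brackets are taken) is the delicate part. The actual Lie-bracket identities are routine once the algebraic framework is set up, so I would state them as a lemma (the single-edge case) and then spend the bulk of the argument on the inductive propagation along the tree and on the final assembly using condition (ii).
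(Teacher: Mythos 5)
Your overall architecture --- induct from the leaves of the tree towards the root, propagate membership in $\Liealg$ across each edge by a bracket computation, and only at the end use the ideal condition $I\subset \mathrm{span}\bigl(\prod_i \ker\theta_i\bigr)$ to assemble the submodule at the root field $\theta_1$ --- is the same as the paper's. However, the two steps you leave vague are exactly where your plan, as written, breaks. First, the single-edge mechanism is misstated: with your kernel hypotheses ($h\in\ker\theta$ and $a\in\ker\theta$) one has $[h\theta,a\phi]=h\,\theta(a)\,\phi-a\,\phi(h)\,\theta+ha[\theta,\phi]$, so no term $h\,\phi(a)\,\theta$ appears at all, and no ``second bracket'' can isolate a useful piece because the uncontrolled summand $ha[\theta,\phi]$ survives. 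The device that actually works is the \emph{difference} of two brackets: if $\theta'$ is complete, $a\in\ker\theta'$, and both $\varphi$ and $a\varphi$ lie in $\Liealg$, then $[a\theta',\varphi]-[\theta',a\varphi]=-\varphi(a)\,\theta'\in\Liealg$ (Lemma \ref{lemma:LieAlgebra1}). In the inductive step one applies this with $\theta'=f^\theta\theta$ a kernel multiple of the \emph{parent} field, $a=a_\psi$ the edge function (which lies in $\ker\theta$ precisely by admissibility, and is of degree one for the \emph{child} --- note your first paragraph has this convention reversed), and $\varphi=\tilde\psi$ the field already controlled at the child; the output is $\tilde\psi(a_\psi)\,f^\theta\theta$, a multiple of the parent. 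Completeness of fields like $a\phi$ with $a$ of degree one plays no role. Moreover, to feed $a_\psi\tilde\psi$ (and not just $\tilde\psi$) into this identity you must re-apply the induction hypothesis to the subtree with its root field replaced by $a_\psi\psi$; this requires the observation that admissibility is preserved because $\ker(a_\psi\psi)=\ker\psi$, which your phrase ``after multiplying by a suitable element'' glosses over.

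Second, the bookkeeping by ``a nonzero ideal $J_j$ at every vertex'' cannot be carried out: what the recursion controls at a vertex $\theta$ is precisely the set of fields $\bigl(\prod_{\varphi\in\mathrm{Vert}(T_\theta)\setminus\{\theta\}} f^\varphi\varphi(a_\varphi)\bigr)f^\theta\theta$ with each $f^\varphi\in\ker\varphi$ arbitrary (Proposition \ref{prop:rootInLieAlgebra}); kernels are subalgebras, not ideals, so already at a leaf there is no nonzero ideal you may ``generate'' without leaving $\Liealg$, and intersections such as $\ker\theta\cap\ker\phi$ are both unnecessary and too small. The ideal enters exactly once, at the very end: by condition (ii) one may take $L:=\prod_{\theta\neq\theta_1}\theta(a_\theta)\,I\,\theta_1$, which lies in $\Liealg$ because every element of $I$ is a span of products $\prod_i f^{\theta_i}$ with $f^{\theta_i}\in\ker\theta_i$, matching the coefficients produced by the induction, and which is a $\C[X]$-submodule because $I$ is an ideal. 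So your final-assembly paragraph has the right idea, but the intermediate ``ideals at interior vertices'' and the garbled edge identity are genuine gaps that the proof must repair along the lines above.
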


We begin with an observation.
\begin{lemma}\label{lemma:LieAlgebra1}
Let $\theta$ be a complete vector field on $X$ and $\varphi\in \mathrm{Lie}_{alg}(X)$ such that $a\varphi\in \mathrm{Lie}_{alg}(X)$ for some $a\in\mathbb{C}[X]$ with $a\in \ker\theta$. Then we have
\begin{align*}
[a\theta,\varphi] - [\theta,a\varphi] = -\varphi(a)\cdot \theta \in \mathrm{Lie}_{alg}(X).
\end{align*}
\end{lemma}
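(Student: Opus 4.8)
The plan is to verify this purely by computation with the Lie bracket of vector fields. Recall that for a vector field $\theta$ and a function $a \in \mathbb{C}[X]$, the product $a\theta$ is again a vector field, and the Leibniz rule for the bracket reads $[a\theta, \varphi] = a[\theta,\varphi] - \varphi(a)\,\theta$ and symmetrically $[\theta, a\varphi] = a[\theta,\varphi] + \theta(a)\,\varphi$, where $\varphi(a)$ and $\theta(a)$ denote the derivatives of $a$ along the respective fields. Subtracting these two identities gives
\begin{align*}
[a\theta,\varphi] - [\theta, a\varphi] = -\varphi(a)\,\theta - \theta(a)\,\varphi.
\end{align*}
Now the hypothesis $a \in \ker\theta$ means precisely $\theta(a) = 0$, so the second term vanishes and we are left with $-\varphi(a)\,\theta$, which is the claimed equality.

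It remains to see that this element lies in $\mathrm{Lie}_{alg}(X)$. By assumption $\varphi \in \mathrm{Lie}_{alg}(X)$ and $a\varphi \in \mathrm{Lie}_{alg}(X)$; since $a \in \ker\theta \subset \mathbb{C}[X]$ and $\theta$ is a complete algebraic vector field, $a\theta$ is a (shear-type) complete algebraic vector field, hence $a\theta \in \mathrm{Lie}_{alg}(X)$. Therefore both brackets $[a\theta,\varphi]$ and $[\theta, a\varphi]$ lie in the Lie algebra $\mathrm{Lie}_{alg}(X)$, and so does their difference. This establishes the displayed membership.

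I do not anticipate any genuine obstacle here — the statement is essentially the Leibniz rule combined with the observation that $a\theta$ is complete when $a \in \ker\theta$. The only point requiring a modicum of care is sign and normalization conventions in the Leibniz rule for the bracket (whether one writes $[a\theta,\varphi] = a[\theta,\varphi] - \varphi(a)\theta$ or with the opposite sign), but this is fixed once and for all by the convention that $\varphi(a)$ denotes the directional derivative $d a(\varphi)$; with that convention the computation above is forced.
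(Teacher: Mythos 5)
Your proof is correct and follows essentially the same route as the paper: the paper expands both brackets on a test function $f$ and uses $\theta(a)=0$ to obtain $-\varphi(a)\theta$, which is just the Leibniz-rule computation you quote, and the membership argument (that $\theta$, $a\theta$ are complete since $a\in\ker\theta$, while $\varphi$, $a\varphi\in\mathrm{Lie}_{alg}(X)$ by hypothesis, so both brackets and hence their difference lie in $\mathrm{Lie}_{alg}(X)$) is identical. No gaps.
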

\begin{proof}
For any $f\in \mathbb{C}[X]$ we get
\begin{align*}
[a\theta,\varphi](f) &= a\theta(\varphi(f)) - \varphi(a\theta(f)) \\&= a\theta(\varphi(f)) - \varphi(a)\cdot \theta(f) - a\varphi(\theta(f))
\end{align*}
and
\begin{align*}
[\theta,a\varphi](f) &= \theta(a\varphi(f)) - a\varphi(\theta(f)) \\&= \underbrace{\theta(a)}_{=0}\varphi(f) + a\theta(\varphi(f)) - a\varphi(\theta(f))\\&= a\theta(\varphi(f)) - a\varphi(\theta(f)).
\end{align*}
By assumption, $\varphi$ and $a\varphi$ are in $\Liealg$. Moreover, since $\theta$ is complete and $a\in \ker\theta$, the vector fields $\theta$ and $a\theta$ are also in $\Liealg$. Therefore
\begin{align*}
[a\theta,\varphi]-[\theta,a\varphi] 
&= a\theta\circ\varphi - \varphi(a)\cdot \theta - a\varphi\circ\theta - a\theta\circ\varphi + a\varphi\circ\theta\\
&=  - \varphi(a)\cdot \theta \in \Liealg
\end{align*}
and this proves the claim.
\end{proof}

\begin{lemma}
    Let $S=S_0\cup \{\theta\}$ be a finite set of vector fields on $X$ and $(T,\pi,\epsilon)$ an $S$-admissible rooted tree with $\pi(\mathrm{root})=\theta$. Assume there is a regular function $a_\theta\in \ker\theta^2\setminus \ker\theta$. Then $(T,\tilde{\pi},\epsilon)$ is $S'$-admissible, where $S'=S_0\cup \{a_\theta\theta\}$ and
    $$ \tilde{\pi}(v) = \begin{cases}
        \pi(v) & v\neq \mathrm{root},\\
        a_{\theta}\theta & v=\mathrm{root}.
    \end{cases}$$
\end{lemma}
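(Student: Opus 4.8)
The plan is to verify directly the three conditions in the definition of an $S'$-admissible graph for the triple $(T,\tilde\pi,\epsilon)$, exploiting that the underlying graph $T$ and the edge-labeling $\epsilon$ are literally unchanged and that $\tilde\pi$ differs from $\pi$ only at the root. The one piece of algebra I would isolate at the outset is the identity $\ker(a_\theta\theta)=\ker\theta$ as subsets of $\mathbb{C}[X]$: since $a_\theta\notin\ker\theta$ we have $a_\theta\neq 0$, and as $X$ is connected $\mathbb{C}[X]$ is an integral domain, so $(a_\theta\theta)(f)=a_\theta\cdot\theta(f)=0$ forces $\theta(f)=0$. I would also record here --- although it is not needed for $S'$-admissibility itself --- that the stronger hypothesis $a_\theta\in\ker\theta^2$ guarantees $a_\theta\theta$ is again complete: along any orbit of $\theta$ the function $a_\theta$ depends affinely on the time parameter, so the reparametrised flow of $a_\theta\theta$ is defined for all time. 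This is the reason the lemma is phrased with $\ker\theta^2$ rather than merely $a_\theta\neq 0$, since it is what allows one later to upgrade admissibility to compatibility, where completeness of all vector fields in the tuple is part of the definition.

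The verification is then short. Condition (i) is immediate, as $T$ is the same directed graph. For condition (ii), $\pi$ restricts to a bijection from $\mathrm{Vert}(T)\setminus\{\mathrm{root}\}$ onto $S_0$, and $\tilde\pi$ agrees with it there while sending the root to $a_\theta\theta$; hence $\tilde\pi$ is a bijection onto $S_0\cup\{a_\theta\theta\}=S'$. For condition (iii), let $(u,v)$ be an edge of $T$. Because $T$ is oriented towards the root, the root is never the source of an edge, so $u\neq\mathrm{root}$ and $\tilde\pi(u)=\pi(u)$; in particular the factor $\ker\tilde\pi(u)^2\setminus\ker\tilde\pi(u)$ is unchanged. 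If also $v\neq\mathrm{root}$, then $\tilde\pi(v)=\pi(v)$ and the required inclusion $\epsilon(u,v)\in(\ker\tilde\pi(u)^2\setminus\ker\tilde\pi(u))\cap\ker\tilde\pi(v)$ is exactly the one supplied by $S$-admissibility of $(T,\pi,\epsilon)$. If $v=\mathrm{root}$, the only modification is that $\ker\pi(v)=\ker\theta$ is replaced by $\ker\tilde\pi(v)=\ker(a_\theta\theta)$, and these coincide by the identity above, so the inclusion persists. Every edge is of one of these two types, whence $(T,\tilde\pi,\epsilon)$ is $S'$-admissible.

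I do not expect a genuine obstacle here. The whole content is that passing from $\theta$ to $a_\theta\theta$ alters neither the kernel of $\theta$ nor the graph $T$, and the convention that the tree is oriented towards the root is exactly what ensures the root shows up in the edge conditions only through the plain kernel factor $\ker\pi(v)$, never through the more rigid factor $\ker\pi(u)^2\setminus\ker\pi(u)$. The two mildly delicate points are the use of the integral-domain property of $\mathbb{C}[X]$ (so $X$ should be taken connected), which is what makes $\ker(a_\theta\theta)=\ker\theta$, and the tacit fact that $a_\theta\theta\notin S_0$, so that $S'$ has the expected cardinality and $\tilde\pi$ is honestly a bijection; both hold automatically in the settings where the lemma gets applied.
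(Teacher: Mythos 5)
Your proposal is correct and follows essentially the same route as the paper: check bijectivity of $\tilde\pi$, observe that only edges terminating at the root are affected (the root is never a source since the tree is oriented towards it), and conclude via $\ker\theta=\ker(a_\theta\theta)$. Your justification of that kernel identity (and the aside on completeness of $a_\theta\theta$) is slightly more detailed than the paper, which states the equality without proof, but the argument is the same.
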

\begin{proof}
    Clearly, $\tilde{\pi}\colon\mathrm{Vert}(T)\to S'$ is bijective. By definition of $\tilde{\pi}$, it remains to show that
    $$ \epsilon(v,\mathrm{root})\in \left(\ker\tilde{\pi}(v)^2\setminus \ker\tilde{\pi}(v)\right)\cap \ker(a_\theta\theta).$$
    And that is true because $\ker\theta = \ker(a_\theta\theta)$.
\end{proof}

Let $S$ be a finite non-empty set of vector fields on $X$ and $(T,\pi,\epsilon)$ an $S$-admissible rooted tree. For $\theta\in S$, let $T_\theta$ denote the subtree of $T$ with root $\pi^{-1}(\theta)$.

\begin{cor}\label{cor:admissSubtree}
    Let $S$ be a finite non-empty set of vector fields on $X$ and $(T,\pi,\epsilon)$ an $S$-admissible rooted tree.  Then $(T_\theta, \pi|_{\mathrm{Vert}(T_\theta)},\epsilon|_{\mathrm{Edge}(T_\theta)})$ is $\pi(\mathrm{Vert}(T_\theta))$-admissible.
\end{cor}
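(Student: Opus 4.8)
The plan is to check directly the three defining conditions of an $S'$-admissible graph for the triple $(T_\theta,\pi|_{\mathrm{Vert}(T_\theta)},\epsilon|_{\mathrm{Edge}(T_\theta)})$ with $S' = \pi(\mathrm{Vert}(T_\theta))$, using that passing from $T$ to the subtree $T_\theta$ disturbs none of the relevant data. First I would recall that $T_\theta$ is the full subtree of $T$ with root $\pi^{-1}(\theta)$: its vertex set is a subset of $\mathrm{Vert}(T)$, its edge set is a subset of $\mathrm{Edge}(T)$ (no new edges appear), and it carries the orientation inherited from $T$. In particular $T_\theta$ is again a directed graph, so condition (i) is immediate.

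Next I would verify condition (ii): the restriction $\pi|_{\mathrm{Vert}(T_\theta)}$ of the injective map $\pi$ is injective, and it is surjective onto $\pi(\mathrm{Vert}(T_\theta)) = S'$ by the very choice of $S'$; hence it is a bijection. For condition (iii), let $(u,v)$ be an edge of $T_\theta$. Then $(u,v)$ is also an edge of $T$, and the three objects entering the admissibility condition — namely $\pi(u)$, $\pi(v)$, and $\epsilon(u,v)$ — take the same values whether read off from $(T,\pi,\epsilon)$ or from its restriction to $T_\theta$. Hence the required kernel membership for $\epsilon|_{\mathrm{Edge}(T_\theta)}(u,v)$ is literally the one already guaranteed by $S$-admissibility of $(T,\pi,\epsilon)$.

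There is really no obstacle here: admissibility is a condition imposed edge by edge and vertex by vertex, so it is automatically inherited by any sub-configuration that neither adds edges nor changes the images of $\pi$ and $\epsilon$, and a subtree is such a configuration. For later use I would also note that $T_\theta$ is rooted at $\pi^{-1}(\theta)$ and that its root is mapped to $\theta$, so $(T_\theta,\pi|_{\mathrm{Vert}(T_\theta)},\epsilon|_{\mathrm{Edge}(T_\theta)})$ is in fact a rooted $S'$-admissible tree whose root vertex is $\theta$.
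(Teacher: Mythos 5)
Your verification is correct and matches the paper's treatment: the paper states this corollary without proof precisely because admissibility is an edge-by-edge and vertex-by-vertex condition, so it is inherited verbatim by the subtree, which is exactly the direct check you carry out.
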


 A vertex $\theta\in \mathrm{Vert}(T)$ is called a \textit{leaf} if $\mathrm{Vert}(T_\theta)= \{\theta\}$.

\begin{prop}\label{prop:rootInLieAlgebra}
Let $S$ be a finite non-empty set of complete algebraic vector fields on an algebraic manifold $X$. Assume there is an $S$-admissible rooted tree $(T,\pi,\epsilon)$ with root $V$. Then
$$ \left( \prod_{\varphi\in \mathrm{Vert}(T)\setminus \{V\}} f_\varphi \varphi(a_\varphi)\right) f_V V \in \mathrm{Lie}_{alg}(X).$$
\end{prop}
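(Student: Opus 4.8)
The plan is to induct on the number of vertices of $T$. The base case is a single vertex $T = \{V\}$, where the claimed element is just $f_V V$; since $V$ is complete and $f_V \in \ker V$, this lies in $\algliealgint \subset \Liealg$, and the empty product causes no trouble. For the inductive step, let $V$ be the root, and let $\phi_1, \dots, \phi_k$ be the children of $V$ in $T$, with $\relation{\phi_j}{V}{a_{\phi_j}}$, i.e. $a_{\phi_j} = \epsilon(\phi_j, V) \in \big(\ker \phi_j^2 \setminus \ker \phi_j\big) \cap \ker V$. By Corollary~\ref{cor:admissSubtree} each subtree $T_{\phi_j}$ is $\pi(\mathrm{Vert}(T_{\phi_j}))$-admissible, so by the inductive hypothesis the element
\[
w_j := \left( \prod_{\varphi\in \mathrm{Vert}(T_{\phi_j})\setminus \{\phi_j\}} f_\varphi\, \varphi(a_\varphi)\right) f_{\phi_j}\, \phi_j
\]
lies in $\Liealg$ for suitable $f_\varphi \in \ker\varphi$.

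Next I would combine $w_j$ with the root $V$ using Lemma~\ref{lemma:LieAlgebra1}. The key point is that $a_{\phi_j} \in \ker V$, so $V$ is complete with $a_{\phi_j} \in \ker V$; taking $\theta = V$, $a = a_{\phi_j}$, and $\varphi = w_j$ in the lemma (noting $a_{\phi_j} w_j \in \Liealg$ because $a_{\phi_j} \in \ker\phi_j$, so multiplying the coefficient of $w_j$ by $a_{\phi_j}$ keeps it in $\ker\phi_j$, and this in turn is in $\Liealg$ by the inductive hypothesis applied with the modified leaf coefficient $a_{\phi_j} f_{\phi_j}$ — alternatively just invoke the preceding lemma rescaling the root of $T_{\phi_j}$), the lemma yields
\[
-\, w_j(a_{\phi_j}) \cdot V \in \Liealg.
\]
Now $w_j(a_{\phi_j}) = \big(\prod_{\varphi\in \mathrm{Vert}(T_{\phi_j})\setminus\{\phi_j\}} f_\varphi\,\varphi(a_\varphi)\big)\, f_{\phi_j}\, \phi_j(a_{\phi_j})$, which is exactly $\prod_{\varphi \in \mathrm{Vert}(T_{\phi_j})} f_\varphi\, \varphi(a_\varphi)$ after relabelling — so we have produced $\big(\prod_{\varphi \in \mathrm{Vert}(T_{\phi_j})} f_\varphi\, \varphi(a_\varphi)\big) V \in \Liealg$ for each child $j$.

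Finally, I would multiply these together over all children. Since $\Liealg$ is a Lie algebra but not obviously a $\C[X]$-module, I cannot just multiply by arbitrary functions; instead I observe that for a fixed $j$ and any regular function $g$ in the ideal generated by the other factors, applying Lemma~\ref{lemma:LieAlgebra1} repeatedly (or using that a constant-coefficient rescaling of a complete field stays complete, combined with the module-like closure that these bracket identities generate) lets me absorb the remaining product factors $\prod_{l \neq j}\big(\prod_{\varphi \in \mathrm{Vert}(T_{\phi_l})} f_\varphi\,\varphi(a_\varphi)\big)$ and the extra leaf factor $f_V$ into the coefficient of $V$. Concretely: the function $h := \prod_{\varphi \in \mathrm{Vert}(T)\setminus\{V\}} f_\varphi\,\varphi(a_\varphi)$ factors as a product of the per-child functions, and one shows by the same bracket trick (using that each $a_{\phi_j} \in \ker V$ and iterating over the tree) that $h\, f_V\, V \in \Liealg$, which is the assertion. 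I expect the main obstacle to be the bookkeeping in this last step: carefully justifying that one may successively multiply the coefficient of $V$ by each required factor while staying inside $\Liealg$ — this is really an iterated application of Lemma~\ref{lemma:LieAlgebra1} with $\theta = \phi_j$ at each stage (so that the newly introduced factor lies in $\ker\phi_j$ as required), and getting the order of the multiplications right so that at every stage the hypotheses of the lemma are met is the delicate part.
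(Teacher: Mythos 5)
Your skeleton --- induction over the tree, Corollary \ref{cor:admissSubtree} for the subtrees, and the bracket identity of Lemma \ref{lemma:LieAlgebra1} to trade a child field for a function multiplying the root --- is the paper's strategy, but two of your justifications are wrong and the decisive step is left open. First, $a_{\phi_j}=\epsilon(\phi_j,V)$ lies in $\left(\ker\phi_j^2\setminus\ker\phi_j\right)\cap\ker V$, so your claim that $a_{\phi_j}\in\ker\phi_j$ (and hence the ``modified leaf coefficient $a_{\phi_j}f_{\phi_j}$'' variant) is false; only your third alternative is valid: by the root-rescaling lemma following Lemma \ref{lemma:LieAlgebra1}, the subtree $T_{\phi_j}$ with root field $a_{\phi_j}\phi_j$ (still complete, since $\phi_j^2(a_{\phi_j})=0$) is again admissible, and the induction hypothesis applied to this modified tree gives $a_{\phi_j}w_j\in\Liealg$. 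Second, and more seriously, after producing the per-child elements $h_jV:=w_j(a_{\phi_j})V\in\Liealg$ you still have to multiply them together and by $f_V$, and $\Liealg$ is not a $\C[X]$-module; this is exactly the step you defer to ``bookkeeping'', and the mechanism you indicate --- applying Lemma \ref{lemma:LieAlgebra1} with $\theta=\phi_j$ so that ``the newly introduced factor lies in $\ker\phi_j$'' --- is misassigned: the factors to be absorbed do not lie in $\ker\phi_j$, and the kernel condition that actually makes the argument work is $a_{\phi_j}\in\ker V$, i.e.\ relative to the root, not to the child.

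The way to close this gap (and it is how the paper argues) is not to form the per-child elements separately and multiply afterwards, but to carry an arbitrary $f_V\in\ker V$ from the start and chain the children in one sequence: set $W_0:=f_VV$ and $W_k:=[a_{\phi_k}w_k,W_{k-1}]-[w_k,a_{\phi_k}W_{k-1}]$. Since $W_{k-1}$ is a function multiple of $V$ and $a_{\phi_k}\in\ker V$, the bracket computation yields $W_k=\pm\, w_k(a_{\phi_k})\,W_{k-1}$, and after the last child one obtains $\bigl(\prod_{j} w_j(a_{\phi_j})\bigr)f_VV$ up to a sign that can be absorbed into $f_V$; this is the asserted element. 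Two points must then be checked which your sketch does not cover: (i) the membership $a_{\phi_k}W_{k-1}\in\Liealg$, which follows precisely because $f_V$ was kept arbitrary --- replace $f_V$ by $a_{\phi_k}f_V\in\ker V$ throughout the construction of $W_{k-1}$; and (ii) for $k\geq 2$ the field $W_{k-1}$ is no longer complete, so Lemma \ref{lemma:LieAlgebra1} cannot be invoked verbatim; one repeats its computation, which only needs $W_{k-1}(a_{\phi_k})=0$ together with the separate $\Liealg$-membership of the four bracket entries. With these corrections your plan becomes the paper's proof.
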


\begin{proof}
The proof is by induction on the depth of the tree. Let's start with a leaf $\theta\in \mathrm{Vert}(T)$. Then the subtree $T_\theta$ consists of only one vertex and no edges. Hence
$$ \left( \prod_{\varphi\in \mathrm{Vert}(T_\theta)\setminus \{\theta\}} f_\varphi\varphi(a_\varphi)\right) f^\theta\theta = f^\theta \theta \in \mathrm{Lie}_{alg}(X),$$
since $f^\theta\theta$ is complete. This proves the base case.

Now we consider a vertex $\theta\in \mathrm{Vert}(T)$ which is not a leaf. By the induction hypothesis, the proposition is true for all vertices $\psi\in \mathrm{Vert}(T_\theta)$ with $\relation{\psi}{\theta}{{a_{\psi}}}$, that is, we have
$$ \tilde{\psi}:=\left( \prod_{\varphi\in \mathrm{Vert}(T_\psi)\setminus \{\psi\}} f^\varphi\varphi(a_\varphi)\right) f^\psi\psi \in \mathrm{Lie}_{alg}(X)$$
for all such vertices $\psi$. By Corollary \ref{cor:admissSubtree}, the subtree $T_\psi$ is $S':=\pi(\mathrm{Vert}(T_\psi))$-admissible with root $\psi$. Moreover, there is a regular function $a_\psi\in \left(\ker\psi^2\setminus \ker\psi\right)\cap \ker\theta$ by assumption. We can therefore apply Lemma 3.8 to conclude that the subtree $T_\psi$ is $S''$-admissible, where $S''$ is obtained by replacing $\psi$ with $a_\psi\psi$. By the induction hypothesis, we get
$$ a_\psi\tilde{\psi} \in \Liealg$$
for all vertices $\psi$ with $\relation{\psi}{\theta}{{a_{\psi}}}$.

In the next step, we let $\{\psi_1,\dots,\psi_N\}$ denote the set of vertices $\psi\in \mathrm{Vert}(T_\theta)$ such that $\relation{\psi}{\theta}{a_\psi}$. Define
$$ W_1 := [a_{\psi_1}\tilde{\psi}_1,f^\theta \theta] - [\tilde{\psi}_1,a_{\psi_1}f^\theta\theta] = \tilde{\psi}_1(a_{\psi_1})f^\theta\theta, $$
for an arbitrary regular function $f^\theta$ with $f^\theta\in \ker\theta$.
Observe that $\tilde{\psi}_1,a_{\psi_1}\tilde{\psi}_1\in \Liealg$ and $f^\theta\theta, a_{\psi_1}f^\theta\theta$ are even complete, since $f^\theta, a_{\psi_1}f^\theta\in \ker\theta$. Lemma \ref{lemma:LieAlgebra1} implies therefore $W_1\in \Liealg$ for every $f^\theta\in \ker\theta$. In particular, we have $fW_1\in \Liealg$ for all $f \in \ker\theta$. We continue and define 
$$ W_k:=[a_{\psi_k}\tilde{\psi}_k, W_{k-1}] - [\tilde{\psi}_k,a_{\psi_k}W_{k-1}]$$
for ${2\leq k\leq N}$. A repeated application of the same reasoning as for $W_1$ implies that
$$ W_k= \pm\tilde{\psi}_1(a_{\psi_1})\cdots \tilde{\psi}_k(a_{\psi_k})f^\theta \theta \in \Liealg,\quad 1\leq k\leq N.$$
It should be mentioned that we can ignore the sign without loss of generality, since $-1$ can be absorbed by $f^\theta$. In summary, we conclude that
\begin{align*}
W_N &= \left(\prod_{k=1}^N \tilde{\psi}_k(a_{\psi_k})\right) f^\theta\theta\\
&= \left( \prod_{k=1}^N f^{\psi_k}\psi_k(a_{\psi_k})\prod_{\varphi\in \mathrm{Vert}(T_{\psi_k})\setminus \{\psi_k\}} f^\varphi\varphi(a_\varphi)\right) f^\theta\theta\\
&= \left( \prod_{\varphi\in \mathrm{Vert}(T_\theta)\setminus\{\theta\}} f^\varphi \varphi(a_\varphi)\right) f^\theta\theta.
\end{align*}
Since we chose an arbitrary $\theta$, the proposition follows with $\theta=V$.
\end{proof}

\begin{proof}[Proof of Theorem \ref{theorem:submodule}]
Let $(\theta_1,\dots,\theta_n)$ be a compatible $n$-tuple and write $S=\{\theta_1,\dots,\theta_n\}.$ By assumption, there exists an $S$-admissible tree $(T,\pi,\epsilon)$ with $\pi(\mathrm{root})=\theta_1$ and a non-zero ideal $I \subset \mathrm{span}(\prod_{\theta\in \mathrm{Vert}(T)} \ker\theta)$. By Proposition \ref{prop:rootInLieAlgebra}, we have
$$ \left( \prod_{\varphi\in \mathrm{Vert}(T)\setminus \{\theta_1\}} f^\varphi \varphi(a_\varphi)\right) f^{\theta_1} \theta_1 \in \mathrm{Lie}_{alg}(X)$$
and hence
$$ L:=\prod_{\theta\in \mathrm{Vert}(T)\setminus\{\theta_1\}}\theta(a_\theta) I \theta_1$$
is the desired $\mathbb{C}[X]$-submodule.
\end{proof}

The following is a generalized version of \cite{KalimanKutzschebauch}*{Theorem 2}.
\begin{theorem}\label{theorem:generalizedTheorem2}

    Let $X$ be a homogeneous affine algebraic manifold with finitely many compatible $n$-tuples $\{(\theta_{k,1},\dots,\theta_{k,n})\}_{k=1}^m$ such that for some $x_0\in X$, $\{(\theta_{k,1})_{x_0}\}_{k=1}^m\subset T_{x_0}X$ is a generating set. Then $X$ has the algebraic density property.
\end{theorem}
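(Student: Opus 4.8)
The plan is to convert the compatible $n$-tuples into $\regularfunc$-submodules of $\algliealg$ contained in $\Liealg$, to combine them into a single submodule whose fiber over a suitable point contains a generating set, and then to invoke Theorem~\ref{theorem:KalimanKutzschebauch}. First I would apply Theorem~\ref{theorem:submodule} to each $(\theta_{k,1},\dots,\theta_{k,n})$, $k=1,\dots,m$; as its proof shows, the submodule produced is $L_k=J_k\,\theta_{k,1}$, where $J_k=c_k I_k$ is the product of the nonzero ideal $I_k$ supplied by compatibility with the fixed function $c_k=\prod_{\theta}\theta(a_\theta)$, the product running over the non-root vertices of the admissible tree of the tuple. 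Since $X$ is irreducible, $\regularfunc$ is an integral domain, and since $a_\theta\notin\ker\theta$ forces $\theta(a_\theta)\neq 0$, each $J_k$ is a nonzero ideal; thus $L_k$ is a nonzero $\regularfunc$-submodule of $\algliealg$ lying in $\Liealg$.

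Next I would form $L:=\sum_{k=1}^m L_k=\sum_{k=1}^m J_k\theta_{k,1}$, a $\regularfunc$-submodule of $\algliealg$; since $\Liealg$ is a $\C$-linear subspace containing each $L_k$, we have $L\subset\Liealg$. If none of the $J_k$ vanished at $x_0$ the proof would already be complete, since then $L_{x_0}=\operatorname{span}_{\C}\{(\theta_{k,1})_{x_0}:k\}$ contains the generating set and Theorem~\ref{theorem:KalimanKutzschebauch} applies. To drop that assumption I would enlarge $L$ to $\widehat L:=\sum_{\phi\in\mathrm{Aut}_{alg}(X)}\phi_*L$. Every $\phi\in\mathrm{Aut}_{alg}(X)$ acts on $\algliealg$ as a Lie-algebra automorphism carrying complete vector fields to complete ones, so $\phi_*\Liealg=\Liealg$ and hence $\widehat L\subset\Liealg$; moreover $\widehat L$ is $\mathrm{Aut}_{alg}(X)$-invariant, so its fiber $\widehat L_{x_0}\subset T_{x_0}X$ is a linear subspace stable under the isotropy subgroup of $x_0$.

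It then suffices to show $(\theta_{k,1})_{x_0}\in\widehat L_{x_0}$ for every $k$: by isotropy-stability, $\widehat L_{x_0}$ contains the isotropy orbit of the generating set $\{(\theta_{k,1})_{x_0}\}_k$, hence equals $T_{x_0}X$, and Theorem~\ref{theorem:KalimanKutzschebauch} concludes. Fix $k$. If $J_k(x_0)\neq 0$ this is immediate. If $J_k(x_0)=0$, note that $\phi_*L_k=(\phi_*J_k)\,(\phi_*\theta_{k,1})\subset\widehat L$ and that $\phi_*J_k$ is non-vanishing at $x_0$ whenever $\phi^{-1}(x_0)\notin V(J_k)$, so $(\phi_*\theta_{k,1})_{x_0}\in\widehat L_{x_0}$ for all such $\phi$. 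The hypotheses already make $X$ flexible at $x_0$, since the complete vector fields $\psi_*\theta_{k,1}$ ($\psi$ in the isotropy subgroup of $x_0$) have as values at $x_0$ the isotropy orbit of the generating set, which spans $T_{x_0}X$. Hence one can build a holomorphic arc $s\mapsto\Phi_s$ of algebraic automorphisms with $\Phi_0=\operatorname{id}$ by composing flows of such complete fields, choosing the coefficients so that its velocity at $x_0$ points out of the tangent cone of $V(J_k)$; then $\Phi_s^{-1}(x_0)\notin V(J_k)$ for small $s\neq 0$, so $(\Phi_{s*}\theta_{k,1})_{x_0}\in\widehat L_{x_0}$ there. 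Since $s\mapsto(\Phi_{s*}\theta_{k,1})_{x_0}$ is holomorphic into the finite-dimensional space $T_{x_0}X$ and $\widehat L_{x_0}$ is closed, letting $s\to 0$ yields $(\theta_{k,1})_{x_0}\in\widehat L_{x_0}$.

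The step I expect to be the main obstacle is exactly this last transport. Theorem~\ref{theorem:submodule} guarantees only that the ideals $J_k$ are nonzero, not that they avoid the prescribed point $x_0$, so a generating set cannot be read off $L_{x_0}$ directly; the $\mathrm{Aut}_{alg}(X)$-invariant enlargement $\widehat L$ — which remains in $\Liealg$ precisely because $\Liealg$ is automorphism-invariant — together with the limiting argument is meant to repair this. One should also check that the one-parameter families used are algebraic automorphisms (true because the complete vector fields in play integrate to algebraic $\C$- or $\C^{*}$-actions) and that a sum over all of $\mathrm{Aut}_{alg}(X)$ is still an $\regularfunc$-submodule (it is, being a sum of $\regularfunc$-submodules).
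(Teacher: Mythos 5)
Your first two steps coincide with the paper's own argument: apply Theorem \ref{theorem:submodule} to each tuple to get submodules $J_k\,\theta_{k,1}\subset\Liealg$ and sum them. The divergence is in how the zero locus of the ideals is avoided, and this is where your argument has a genuine gap. Your key transport step requires a holomorphic arc $s\mapsto\Phi_s$ of \emph{algebraic} automorphisms with $\Phi_0=\operatorname{id}$ and $\Phi_s^{-1}(x_0)\notin V(J_k)$ for small $s\neq 0$: algebraicity is indispensable, since only for $\phi\in\mathrm{Aut}_{alg}(X)$ is $\phi_*L$ a $\regularfunc$-submodule of $\algliealg$ contained in $\Liealg=\phi_*\Liealg$, and hence only then does $(\Phi_{s*}\theta_{k,1})_{x_0}$ land in $\widehat L_{x_0}$. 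You propose to build $\Phi_s$ from flows of the complete algebraic fields $\psi_*\theta_{k,1}$ and justify algebraicity by claiming these fields integrate to algebraic $\C$- or $\C^*$-actions. That claim does not follow from the hypotheses and is false in general: a complete algebraic vector field typically has only a holomorphic, non-algebraic flow (the definition of a compatible $n$-tuple asks for completeness, nothing more). The paper itself is careful about exactly this point: in Corollary \ref{cor:theorem2} the algebraicity of the flows of the $V_i$ is an \emph{extra} hypothesis, not a consequence of completeness. With merely holomorphic $\Phi_s$, the pushforward $\Phi_{s*}\theta_{k,1}$ need not even be an algebraic vector field, and the limiting argument into $\widehat L_{x_0}$ collapses; so as written the proof establishes at best a statement in the holomorphic category, not the algebraic density property.

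The paper avoids this entirely by moving the point instead of the module: since being a generating set is stable under small perturbations of the base point, one may assume $x_0$ lies outside the zero locus of the ideal $J$, after which $L_{x_0}$ already contains a generating set and Theorem \ref{theorem:KalimanKutzschebauch} applies. If you want to keep your automorphism-orbit construction, you must either restrict to automorphisms genuinely in $\mathrm{Aut}_{alg}(X)$ (for instance one-parameter algebraic subgroups, whose existence the hypotheses do not provide) or add the assumption that the relevant complete fields have algebraic flows, as in Corollary \ref{cor:theorem2}; otherwise the repair of the zero-locus issue is the one step of your proposal that does not go through.
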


\begin{proof}
    By Theorem \ref{theorem:submodule}, there exist non-zero ideals $I_k$ such that $\mathrm{Lie}_{alg}(X)$ contains the submodules $L_k=I_k\theta_{k,1}$. Hence, there exists a non-zero ideal $J\subset \mathbb{C}[X]$ such that $\mathrm{Lie}_{alg}(X)$ contains the submodule
    $$ L = \biggl\{\sum_{k=1}^m \alpha_k \theta_{k,1}: \alpha_1,\dots,\alpha_m\in J \biggl\}. $$
    Since $\{(\theta_{k,1})_{x_0}\}_{k=1}^m$ remains a generating set under small perturbations of the base point $x_0$, we can suppose that $x_0$ does not belong to the zero locus of $J$. For such $x_0$ the set $\{V_{x_0}: V\in L\}$ contains a generating set, and by Theorem \ref{theorem:KalimanKutzschebauch} it follows that $X$ has the algebraic density property.
\end{proof}

\section{Holomorphic flexibility and the density property}
In this section we prove Theorems  \ref{theorem:SufficientConditionsDP} and \ref{theorem:holFlexDP}. To do this, we pass from the algebraic to the holomorphic setting. Given a Stein manifold $X$, we let $\holiealg$ denote the Lie algebra of all holomorphic vector fields on $X$ and $\holiealgint$ the set of complete holomorphic vector fields on $X$. Moreover, $\Lie$ denotes the Lie algebra generated by $\holiealgint$ and $\ringofholo$ the ring of holomorphic functions on $X$. Recall the definition of the density property.

\begin{defi}[\cite{Varolin}]
    A Stein manifold $X$ has the density property, if
    $$ \overline{\Lie} = \holiealg,$$
    where the closure is with respect to the compact-open topology.
\end{defi}

As mentioned in the introduction, we introduce the notion of a (holomorphic) compatible $n$-tuple.
\begin{defi}\label{def:holomorphicCompatibleTuple}
    Given complete holomorphic vector fields $\theta_1,\dots,\theta_n$ on a Stein manifold $X$, the ordered $n$-tuple $(\theta_1,\dots,\theta_n)$ is $n$-compatible if the following two conditions are satisfied:
    \begin{enumerate}[label=(\arabic*)]
        \item There exists a non-zero ideal $I\subset \ringofholo$ such that
        $$ I\subset \overline{\mathrm{span}\left(\prod_{i=1}^n\ker \theta_i\right)}.$$
        \item There is a graph $(G,\pi,\epsilon)$, where
        \begin{enumerate}[label=(\roman*)]
            \item $G$ is a rooted directed tree with orientation towards the root
            \item $\pi\colon\mathrm{Vert}(G)\to \{\theta_1,\dots,\theta_n\}$ is a bijection with $\pi(\mathrm{root})=\theta_1$
            \item $\epsilon\colon\mathrm{Edges}(G)\to \ringofholo$ is a mapping with
            $$ \epsilon(v,w)\in \left(\ker\pi(v)^2\setminus \ker\pi(v)\right)\cap \ker\pi(w). $$
        \end{enumerate}
    \end{enumerate}
\end{defi}

The next lemma can be found in \cite{FlexibleVarieties}*{Lemma 4.1} and plays an important role in the proof of Theorem \ref{theorem:holFlexDP}.  
\begin{lemma} \label{calculation}
  Let $V$ be a complete vector field on a manifold $X$, $f\in \mathcal{O} (X)$ a function in the kernel of $V$ and $x \in X$
  a point with $f(x) = 0$. Denote the flow of the complete  field $f V$ at time $t$
   by $\varphi_t$ (fixing $x$ by our assumption). Then 
the differential $d_x \varphi_t$, which is an endomorphism of $T_x X$,  acts on a tangent vector $W \in T_x X$ as follows

$$W \mapsto W + t \cdot d_x f (W) \cdot V_x. $$
\end{lemma}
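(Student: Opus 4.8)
The plan is to compute the flow of $fV$ explicitly near $x$ and then differentiate. First I would observe that since $f \in \ker V$, the function $f$ is constant along the integral curves of $V$; this is the crucial structural feature that makes the flow of $fV$ tractable. Concretely, if $\psi_s$ denotes the flow of $V$, then I claim the flow of $fV$ is given by $\varphi_t(y) = \psi_{t f(y)}(y)$. To verify this, one differentiates in $t$: writing $\gamma(t) = \psi_{t f(y)}(y)$, the chain rule gives $\dot\gamma(t) = f(y)\cdot V_{\gamma(t)}$, and because $f$ is $V$-invariant we have $f(y) = f(\psi_{tf(y)}(y)) = f(\gamma(t))$, so $\dot\gamma(t) = f(\gamma(t))\cdot V_{\gamma(t)}$, which is exactly the equation for the flow of $fV$; uniqueness of solutions of ODEs finishes the identification. (One should note $\varphi_t$ is defined and fixes $x$ because $f(x)=0$ forces $\varphi_t(x) = \psi_0(x) = x$.)

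Next I would differentiate $\varphi_t(y) = \psi_{t f(y)}(y)$ at the fixed point $y = x$. Think of the map $y \mapsto \psi_{tf(y)}(y)$ as the composition of $y \mapsto (tf(y), y)$ with the flow map $(s,y)\mapsto \psi_s(y)$. Applying the chain rule at $x$ and using $f(x) = 0$ (so that the $s$-slot is evaluated at $s=0$, where $\psi_0 = \mathrm{id}$), a tangent vector $W \in T_xX$ is sent to
\[
d_x\varphi_t(W) \;=\; t\, d_xf(W)\cdot \left.\frac{\partial}{\partial s}\right|_{s=0}\psi_s(x) \;+\; d_x(\psi_0)(W) \;=\; W + t\, d_xf(W)\cdot V_x,
\]
since $\tfrac{\partial}{\partial s}\big|_{s=0}\psi_s(x) = V_x$ by definition of the flow of $V$, and $d_x(\psi_0) = \mathrm{id}_{T_xX}$. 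This is the asserted formula.

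I do not expect any serious obstacle here; the lemma is essentially a bookkeeping exercise with the chain rule once the key identity $\varphi_t = \psi_{t f(\,\cdot\,)}(\,\cdot\,)$ is in hand. The one point requiring a little care is the completeness/well-definedness bookkeeping: $V$ is assumed complete, so $\psi_s$ is a global flow, and since $f(x)=0$ the point $x$ stays fixed under $\varphi_t$ for all $t$, so the differential $d_x\varphi_t$ is a genuine endomorphism of $T_xX$ and the computation above is legitimate for every $t \in \C$ (or $\mathbb R$). Alternatively, and perhaps more cleanly, one can avoid even writing the flow of $V$ by arguing infinitesimally: differentiate the defining ODE $\tfrac{d}{dt}\varphi_t = (fV)\circ \varphi_t$ in the space variable at $x$, obtaining a linear ODE $\tfrac{d}{dt}(d_x\varphi_t) = d_xf(\,\cdot\,)\cdot V_x$ (the term involving $d_x V$ drops out because it is multiplied by $f(x) = 0$), whose solution with initial condition $d_x\varphi_0 = \mathrm{id}$ is precisely $W \mapsto W + t\,d_xf(W)\cdot V_x$. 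I would present whichever of these two variants is shorter in context.
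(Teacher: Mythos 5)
Your argument is correct and complete: the identity $\varphi_t(y)=\psi_{tf(y)}(y)$, which holds because $f$ is constant along the flow $\psi$ of $V$, combined with the chain rule at the fixed point $x$ (where $f(x)=0$) gives exactly the asserted formula, and the completeness bookkeeping is handled properly. The paper itself offers no proof here, only the citation to Lemma 4.1 of Arzhantsev--Flenner--Kaliman--Kutzschebauch--Zaidenberg, and your computation is precisely that standard argument. One small point in your alternative infinitesimal variant: the linearized equation is really $\tfrac{d}{dt}\,d_x\varphi_t = A\circ d_x\varphi_t$ with $A(W)=d_xf(W)\,V_x$, not with constant right-hand side; since $d_xf(V_x)=V(f)(x)=0$ makes $A$ nilpotent, the solution is still $\mathrm{id}+tA$, so the conclusion is unaffected.
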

We are ready to prove Theorem \ref{theorem:holFlexDP}, which extends a result by Varolin, who proved it in the special case when $X$ is a Stein complex Lie group \cite{Varolin}.
\setcounter{thm}{2}
\begin{thm}\label{lemma:flexible} Suppose $X$ is a holomorphically flexible Stein  manifold. Then $X \times \C$ 
has the density property.
\end{thm}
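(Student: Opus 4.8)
The plan is to exhibit on $X \times \C$ enough $\C$-complete holomorphic vector fields and compatible $n$-tuples to apply the holomorphic version of the generalized Kaliman--Kutzschebauch criterion (Theorem~\ref{theorem:KalKutHolomorph}, via Theorems~\ref{theorem:submodule} and \ref{theorem:generalizedTheorem2}, whose proofs carry over to the holomorphic setting). Write $t$ for the coordinate on the $\C$-factor and $\pi \colon X \times \C \to X$ for the projection. First I would note that $X \times \C$ is again Stein, and that it is homogeneous with respect to $\aut_{hol}(X \times \C)$: since $X$ is holomorphically flexible, the $\C$-complete vector fields on $X$ lifted trivially to $X \times \C$ (they commute with $\deriv{t}$ and preserve $t$) together with $\deriv{t}$ act transitively enough — more carefully, flexibility of $X$ plus the translation flow in $t$ gives a transitive action of $\aut_{hol}(X\times\C)$ on $X \times \C$. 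The key structural observation is that $t \in \ker \deriv{t}^2 \setminus \ker \deriv{t}$, so the coordinate function $t$ is available as an edge label.

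Next I would build the compatible $n$-tuples. Let $\theta$ be any $\C$-complete holomorphic vector field on $X$, lifted to $X \times \C$ (still denoted $\theta$); then $t\theta$ is again $\C$-complete since $t \in \ker\theta$, and $\deriv{t}$ is $\C$-complete. The pair $(\deriv{t}, t\theta)$, or more precisely a short tree with root $\deriv{t}$ and one other vertex $t\theta$ joined by an edge labelled $t$ (note $t \in (\ker(t\theta)^2 \setminus \ker(t\theta)) \cap \ker \deriv{t}$ — here one uses $\ker(t\theta) = \ker\theta \ni$ many functions, and $t$ restricted to a generic fiber of $\theta$ is a nonconstant function vanishing to order one appropriately; this needs a small check), satisfies the tree condition~(2). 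For condition~(1) I need a nonzero ideal $I \subset \overline{\mathrm{span}(\ker\deriv{t} \cdot \ker(t\theta))}$. Since $\ker\deriv{t} = \pi^*\mathcal{O}(X)$ and $\ker(t\theta) = \ker\theta$ (as operators on $\mathcal{O}(X\times\C)$, where $\ker\theta$ now means holomorphic functions on $X\times\C$ annihilated by $\theta$, which includes $t$ itself and all of $\pi^*\ker_X\theta$), the span of the product contains $\pi^*\mathcal{O}(X) \cdot t = t\,\pi^*\mathcal{O}(X)$, and more usefully $t \cdot \pi^*(\ker_X\theta) \cdot \pi^*\mathcal{O}(X)$; running $\theta$ over a family whose kernels generate (again using flexibility) one gets that a suitable ideal — essentially $t\cdot\mathcal{O}(X\times\C)$ together with contributions making it all of $\mathcal{O}(X\times\C)\cdot t$ — lies in the closed span. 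The cleanest route is: the functions $t$ and $t g$ for $g \in \ker_X\theta$ all lie in $\mathrm{span}(\ker\deriv{t}\cdot\ker(t\theta))$, and since for flexible $X$ the kernels $\ker_X\theta$ over all complete $\theta$ generate $\mathcal{O}(X)$ as a ring (indeed they contain a point-separating set), the ideal $I = t \cdot \mathcal{O}(X\times\C)$ works after taking spans over finitely many $\theta$.

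Finally I would use flexibility of $X$ to pick finitely many $\C$-complete vector fields $\theta_1,\dots,\theta_N$ on $X$ whose values span $T_x X$ at a chosen point $x$; lifting them and adjoining $\deriv{t}$, the roots $\deriv{t}, t\theta_1, \dots$ are not quite the right spanning set at a point where $t = 0$ (there $t\theta_j$ vanishes), so instead I would take the $n$-tuples to have roots among $\deriv{t}$ and lifts of the $\theta_j$ directly: use the tuple $(\theta_j, \text{(a tree through }\deriv{t}\text{)})$ with root $\theta_j$, edge $\theta_j \to \deriv{t}$ — but wait, this needs a function in $(\ker\theta_j^2\setminus\ker\theta_j)\cap\ker\deriv{t}$, i.e.\ a $\pi^*$-pulled-back function that is a "first integral of order two" for $\theta_j$; such functions may not exist. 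The correct fix, and the step I expect to be the main obstacle, is the bookkeeping of condition~(2): one should root the trees at $\deriv{t}$ for \emph{every} tuple and instead ensure the \emph{module} $L$ produced by Theorem~\ref{theorem:submodule} has fiber at a well-chosen point $x_0$ (with $t(x_0) \neq 0$) equal to all of $T_{x_0}(X\times\C)$. Concretely, from each compatible tuple with root $\deriv{t}$ one gets $I_j \deriv{t} \subset \Lie$, giving the $\deriv{t}$-direction; and from tuples built the other way — root $t\theta_j$, with a subtree through $\deriv{t}$ — one gets $I_j' (t\theta_j) \subset \Lie$, and at $x_0$ with $t(x_0)\neq 0$ the vectors $(t\theta_j)_{x_0}$ span the $X$-directions. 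Assembling these into one module and invoking homogeneity and Theorem~\ref{theorem:KalKutHolomorph} finishes the proof. The delicate points are (i) verifying the edge-label condition $t \in (\ker(t\theta_j)^2\setminus\ker(t\theta_j))\cap\ker\deriv{t}$ — which reduces to checking $t \notin \ker(t\theta_j)$, i.e.\ $\theta_j(t) = 0$ but $t\theta_j(t) $ interplay, more precisely that $t$ is not killed by $t\theta_j$ acting twice in the right sense, a routine but fiddly computation — and (ii) assembling the several modules so that their combined fiber at one point is the whole tangent space while staying inside the non-vanishing locus of the relevant ideals.
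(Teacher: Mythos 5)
There is a genuine gap, and it sits exactly where you yourself flagged the ``main obstacle.'' The tuples you root at $\deriv{t}$ do not exist as described: with root $\deriv{t}$ and second vertex $t\theta$, the definition (edges oriented toward the root) requires an edge label in $\left(\ker (t\theta)^2\setminus\ker (t\theta)\right)\cap\ker\deriv{t}$, and your candidate $t$ fails on both counts, since $\theta(t)=0$ gives $t\in\ker(t\theta)$, while $\deriv{t}(t)=1$ gives $t\notin\ker\deriv{t}$. A correct label would have to be a function $g$ lifted from $X$ with $\theta(g)\neq 0$ and $\theta^2(g)=0$ (because $(t\theta)^2 g=t^2\theta^2(g)$), i.e.\ a degree-one function for $\theta$ on $X$, and such functions need not exist for a general complete field (e.g.\ $z\,\partial/\partial z$ on $\C^*$ admits none). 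Since these tuples are your only source of the submodules $I_j\deriv{t}$, the $\deriv{t}$-component of your module $\tilde L$ --- and hence the generating/spanning hypothesis of Theorem \ref{theorem:KalKutHolomorph} --- is not established. Compounding this, you mis-oriented the edge when you considered rooting at the lifted fields $\theta_j$: with root $\theta_j$ and second vertex $\deriv{t}$, the required label lies in $\left(\ker\deriv{t}^2\setminus\ker\deriv{t}\right)\cap\ker\theta_j$, and $t$ satisfies this trivially. That is exactly the compatible pair $(\theta_j,\deriv{t})$ the paper uses; you discarded the easy, correct construction and kept the impossible one. (Your tuples rooted at $t\theta_j$ with label $t$ are fine and do yield the horizontal directions at points where $t\neq 0$.)

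What is missing is a legitimate way to reach the $\deriv{t}$-direction. The paper keeps the pairs $(V_i,\deriv{t})$ rooted at $V_i$ and pulls one of them back by the time-one flow $\alpha$ of $f\deriv{t}$, where $f\in\mathcal{O}(X)$, $f(x)=0$ and $d_xf(V)\neq 0$; by Lemma \ref{calculation}, $\alpha^*(V)(x,t)=V(x,t)+d_xf(V)\,\deriv{t}$, so the roots of the old and new pairs together span $T_{(x,0)}(X\times\C)$, and homogeneity of $X\times\C$ plus Theorem \ref{theorem:KalKutHolomorph} concludes. Alternatively, one could note that a horizontal spanning set in the fiber of the module is already a generating set, because the isotropy group of the base point contains the time-one flows of $f\deriv{t}$ with $f$ vanishing there, and these tilt horizontal vectors into the $\deriv{t}$-direction. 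Your write-up contains neither argument, so as it stands the proposal does not prove the theorem.
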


\begin{proof} Let us denote the coordinate on $\C$  by $t$ and suppose that $V$ is a complete vector field on $X$. Then
$(V, \frac{\partial} {\partial t})$  is a compatible pair on $X \times \C$ (where we denote the obvious extension of $V$ to $X\times \C$ again by $V$).
Indeed, $\ker V$ contains all functions of $t$ and $\ker \frac{\partial} {\partial t}$ contains all functions on $X$
and the function $a\in \mathcal{O} (X\times \C)$ defined by 
$a := t$ is of degree 1 with respect to $\frac{\partial} {\partial t}$ and in $\ker V$.  If $V_i$ is a spanning set of complete holomorphic vector fields on $X$, the relevant vectors in the compatible pairs $(V_i, \frac{\partial} {\partial t})$ span $T_x X \subset T (X\times \C)$ at any point $x$. We create a new compatible pair by applying $\alpha^*$ to the pair $(V, \frac{\partial} {\partial t})$ for a suitable automorphism $\alpha \in \mathrm{Aut}_{hol} (X\times \C)$. We choose a point $x \in X$ where $V(x) \ne 0$ and a holomorphic function $f\in \mathcal{O} (X)$ with 
$f(x)=0$ and $d_x f (V) \ne 0$, which is possible on Stein manifolds by a standard application of Cartan's Theorem B. Now, let $\alpha$
be the time-one flow of the complete field $f \frac{\partial} {\partial t}$. By Lemma \ref{calculation} we have $\alpha^* (V) (x,t) =
V (x,t) + d_x f (V) \frac{\partial} {\partial t}$ and together with
other compatible pairs $(V_i, \frac{\partial} {\partial t})$  we have a spanning set of $T_{(x,0)} (X\times \C)$.

Adjoining $\frac{\partial} {\partial t}$ to any collection 
of spanning (the tangent space at each point) complete holomorphic vector fields on $X$ yields a collection of 
complete fields spanning on $X\times \C$, showing that $X\times \C$ is homogeneous under its holomorphic automorphism group. 

Now Theorem \ref{theorem:KalKutHolomorph} implies the claim. \qedhere
\end{proof}

\begin{lemma}\label{lemma:theorem2}
 Let $X$ be a holomorphically flexible Stein manifold and $(\theta_1,\dots,\theta_n)$ a compatible $n$-tuple. Assume that there are complete holomorphic fields $V_1, V_2, \ldots ,V_N$ which span the tangent space $T_x X$ at a point $x \in X$ and admit functions $f_i \in \ker V_i$ with $f_i (x) =0 $ and $d_x f_i (\theta_1) \ne 0$ for $i = 1, \dots, N$. Then
 there is a non-trivial $\ringofholo$-submodule $L $ of $\holiealg$ such that $L \subset \overline{\Lie}$ and the fiber $L_x=\{V_x: V\in L\}$ contains a generating set.
\end{lemma}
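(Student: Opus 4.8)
The plan is to combine the holomorphic counterpart of Theorem~\ref{theorem:submodule} with the commutator identity of Lemma~\ref{lemma:LieAlgebra1}, both read inside the closed Lie algebra $\overline{\Lie}$. First I would apply (the holomorphic version of) Theorem~\ref{theorem:submodule} to the compatible $n$-tuple $(\theta_1,\dots,\theta_n)$ — the algebraic proof carries over with $\Liealg$ replaced by $\overline{\Lie}$ and only notational changes, as indicated in the introduction — obtaining a non-zero ideal $J\subset\ringofholo$, namely $J=\bigl(\prod_{\varphi\in\mathrm{Vert}(T)\setminus\{\theta_1\}}\varphi(a_\varphi)\bigr)I$, with $L_0:=J\,\theta_1\subset\overline{\Lie}$. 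Two elementary properties of $\overline{\Lie}$ will be used throughout: it is a linear subspace with $[\Lie,\overline{\Lie}]\subseteq\overline{\Lie}$ (the bracket of holomorphic vector fields is continuous for the compact--open topology), and it is invariant under the pushforward by the time-$t$ flow of any complete holomorphic vector field.

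Next I would enlarge $L_0$ by $N$ modules pointing in the directions of the $V_i$. Fix $i$. Since $f_i\in\ker V_i$ and $V_i$ is complete, $f_iV_i$ is complete, hence $f_iV_i,V_i\in\Lie$; and for every $h\in J$ both $h\theta_1$ and $f_ih\theta_1$ lie in $L_0$ because $J$ is an ideal. Applying the identity of Lemma~\ref{lemma:LieAlgebra1} with $\theta=V_i$, $a=f_i$, $\varphi=h\theta_1$, where every bracket now lies in $\overline{\Lie}$ by the facts above, yields
\[
  [f_iV_i,\,h\theta_1]-[V_i,\,f_ih\theta_1]=-\,h\,\theta_1(f_i)\,V_i\in\overline{\Lie}.
\]
Thus $M_i:=\theta_1(f_i)\,J\,V_i$ is an $\ringofholo$-submodule of $\holiealg$ contained in $\overline{\Lie}$, and it is non-trivial because $\theta_1(f_i)(x)=d_xf_i(\theta_1)\ne0$ and $J\ne0$. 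Alternatively one may push $L_0$ forward by the time-one flow $\alpha_i$ of the complete field $f_iV_i$; since $f_i(x)=0$ this flow fixes $x$, and Lemma~\ref{calculation} shows $d_x\alpha_i$ shears $(\theta_1)_x$ by a non-zero multiple of $(V_i)_x$. This integrated form carries the same fibre information and is where the hypothesis $f_i(x)=0$ is used.

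Then I would set $L:=L_0+M_1+\dots+M_N$. As a finite sum of $\ringofholo$-submodules of $\holiealg$, each contained in $\overline{\Lie}$, it is itself an $\ringofholo$-submodule with $L\subset\overline{\Lie}$, and $L\supseteq L_0\ne0$, so $L$ is non-trivial. For the fibre, assuming for the moment $x\notin Z(J)$, one has $\{h(x):h\in J\}=\C$, hence $(L_0)_x=\C\,(\theta_1)_x$ and $(M_i)_x=\C\,(V_i)_x$ for each $i$; therefore $L_x\supseteq\mathrm{span}\{(V_1)_x,\dots,(V_N)_x\}=T_xX$, which certainly contains a generating set.

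The step I expect to be the main obstacle is precisely guaranteeing that $L$ does not degenerate at the base point, i.e. that $x\notin Z(J)$ (if $x$ lies in the zero locus of $J$ then $L_x=\{0\}$, which is not a generating set). To deal with this I would observe that the hypotheses on $x$ are open: that $V_1,\dots,V_N$ span $T_xX$ is the maximality of a rank, hence open in $x$; and that there exists $g\in\ker V_i$ with $g(x)=0$ and $d_xg(\theta_1)\ne0$ is open because, given such a $g$, the function $g-g(y)\in\ker V_i$ witnesses the condition for all $y$ near $x$ (its differential is unchanged and stays non-zero by continuity). Since the set of admissible base points is therefore a non-empty open subset of $X$ while $Z(J)$ is a proper analytic subset, one may take the base point outside $Z(J)$; in the setting of Theorem~\ref{theorem:SufficientConditionsDP}, where $V_1,\dots,V_N$ span $TX$ everywhere, this replacement costs nothing. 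With $x\notin Z(J)$ arranged, the computation of the previous paragraph concludes the proof.
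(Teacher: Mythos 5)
Your proposal is correct, and its first half coincides with the paper's: both invoke the holomorphic version of Theorem~\ref{theorem:submodule} to obtain the module $J\,\theta_1\subset\overline{\Lie}$, and both relocate the base point off the zero locus of the ideal by an openness argument (yours is in fact slightly more careful, since you explain why the conditions on the $f_i$ persist at nearby points via $f_i-f_i(y)$, whereas the paper only cites openness of the spanning condition). The second half is where you genuinely diverge: the paper pulls back $L$ by the time-$t$ flows $\varphi^i_t$ of the complete fields $f_iV_i$, which fix $x$ because $f_i(x)=0$, and uses Lemma~\ref{calculation} to see that the sheared vectors $(\theta_1)_x+t\,d_xf_i\bigl((\theta_1)_x\bigr)(V_i)_x$ span $T_xX$ for general $t$; you instead apply the identity of Lemma~\ref{lemma:LieAlgebra1} with $\theta=V_i$, $a=f_i$, $\varphi=h\theta_1$ (legitimate since $h\theta_1,f_ih\theta_1\in J\theta_1\subset\overline{\Lie}$ and $[\Lie,\overline{\Lie}]\subseteq\overline{\Lie}$ by continuity of the bracket) to place $M_i=\theta_1(f_i)\,J\,V_i$ inside $\overline{\Lie}$, so that the fiber of $L_0+M_1+\dots+M_N$ at $x$ is all of $T_xX$. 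Your route buys a cleaner fiber computation (no genericity in $t$, no appeal to Lemma~\ref{calculation}), it shows that the hypothesis $f_i(x)=0$ is only needed for the flow version (you use only $f_i\in\ker V_i$ and $d_xf_i(\theta_1)\neq 0$), and in the algebraic setting it would even remove the need for the flows of the $V_i$ to be algebraic in Corollary~\ref{cor:theorem2}. The paper's flow argument, in turn, is the same shear mechanism as in the proof of Theorem~\ref{theorem:holFlexDP} and keeps the geometric picture; the pushforward alternative you sketch at the end is exactly the paper's proof.
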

\begin{proof} Let $J \subset \ringofholo$ be the ideal of the compatible $n$-tuple $(\theta_1,\dots,\theta_n)$. We can assume that the point $x$ is not in the zero locus of $J$, since spanning the tangent space is an open condition. The holomorphic version of Theorem \ref{theorem:submodule} implies that the compatible $n$-tuple induces a holomorphic function $f \colon X\to \C$ such that the submodule $L:=J \cdot (f\theta_1)$ is contained in $\overline{\Lie}$.

Let $\varphi_t^i$ denote the flow of the complete holomorphic vector field $f_iV_i$. Then let $L_i=(\varphi^i_t)^*(L)$ be the submodules obtained by pulling back $L$ by $\varphi_t^i$.
Observe that the sum $\tilde{L}:=L_1+ \ldots +L_N$ is a submodule contained in $\overline{\Lie}$.

By assumption, the point $x$ is fixed by the flow $\varphi^i_t$. Hence the differential $d_x\varphi^i_t$ as an endomorphism of  $T_x X$ acts as 
$$W \mapsto W + t \cdot d_x f (W)  (V_i)_x$$
by Lemma \ref{calculation}.
For $W:=(\theta_1)_x$, these image vectors span $T_x X$ for general $t$, since the $V_i$'s span $T_x X$ and  $d_x f_i (W) \ne 0$ by assumption. This proves that the fiber $\tilde{L}_x$ contains a generating set.
\end{proof}

As a corollary we have \setcounter{thm}{1}
\begin{thm}
    Let $X$ be a Stein manifold and $(\theta_1,\dots,\theta_n)$ a compatible $n$-tuple. Assume that there are $\C$-complete holomorphic vector fields $V_1,\dots,V_N$ on $X$ which span the tangent bundle $TX$. If there are functions $f_i\in \ker V_i$ such that $f_i(x_0)=0$ and $d_{x_0}f_i(\theta_1)\neq 0$ for some point $x_0\in X$ and $i = 1, \dots, N$, then $X$ has the density property.
\end{thm}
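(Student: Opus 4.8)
The plan is to obtain this as a direct corollary of Lemma~\ref{lemma:theorem2} and the holomorphic Kaliman--Kutzschebauch criterion, Theorem~\ref{theorem:KalKutHolomorph}. The key preliminary remark is that the present hypothesis is genuinely stronger than the one in Lemma~\ref{lemma:theorem2}: there we only assumed $V_1,\dots,V_N$ span $T_{x_0}X$ at a single point, whereas here they span the whole tangent bundle $TX$. This does two jobs at once. On the one hand, the $V_i$ in particular span $T_{x_0}X$, so the point $x_0$ and the functions $f_i$ satisfy verbatim the hypotheses of Lemma~\ref{lemma:theorem2}. On the other hand, the complete holomorphic vector fields on $X$ span every tangent space, i.e.\ $X$ is holomorphically flexible, which is exactly the standing assumption of that lemma.

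First I would record that a (connected) holomorphically flexible Stein manifold $X$ is homogeneous with respect to $\mathrm{Aut}_{hol}(X)$. The flow maps $\varphi_t^{V_i}$ of the complete fields $V_i$ are holomorphic automorphisms of $X$; the orbit of a point $p$ under the subgroup of $\mathrm{Aut}_{hol}(X)$ they generate contains a neighbourhood of $p$, since the differential at $0$ of $(t_1,\dots,t_k)\mapsto \varphi_{t_k}^{V_{i_k}}\circ\cdots\circ\varphi_{t_1}^{V_{i_1}}(p)$ is onto $T_pX$ once $V_{i_1},\dots,V_{i_k}$ is chosen to span $T_pX$. Hence all orbits are open, so each is also closed (its complement being a union of open orbits), and connectedness of $X$ forces a single orbit. (If one does not wish to assume $X$ connected, the statement, and the density property itself, is read componentwise, as usual.) With this, the homogeneity hypothesis of Theorem~\ref{theorem:KalKutHolomorph} is in place. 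Now I would apply Lemma~\ref{lemma:theorem2} with $x:=x_0$, obtaining a non-trivial $\ringofholo$-submodule $L$ of $\holiealg$ with $L\subset\overline{\Lie}$ whose fiber $L_{x_0}=\{V_{x_0}:V\in L\}$ contains a generating set.

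Finally, $X$ being homogeneous under $\mathrm{Aut}_{hol}(X)$ and $L$ being an $\ringofholo$-submodule of $\holiealg$ contained in $\overline{\Lie}$ with $L_{x_0}$ containing a generating set, Theorem~\ref{theorem:KalKutHolomorph} yields at once that $X$ has the density property. The proof is thus essentially an assembly of pieces already established in the excerpt; the only step that is not purely formal is the passage from holomorphic flexibility (infinitesimal transitivity) to genuine homogeneity of the holomorphic automorphism group, and that is the point I would take the most care to justify — for instance by citing the standard orbit argument sketched above or the corresponding statement in the literature.
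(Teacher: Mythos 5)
Your proposal is correct and follows essentially the same route as the paper: the paper's own (very terse) proof likewise notes that spanning all tangent spaces gives holomorphic flexibility and then invokes Theorem~\ref{theorem:KalKutHolomorph}, with Lemma~\ref{lemma:theorem2} implicitly supplying the submodule and the standard orbit argument implicitly supplying homogeneity. You have simply made explicit the two steps (flexibility $\Rightarrow$ $\mathrm{Aut}_{hol}(X)$-homogeneity, and the application of Lemma~\ref{lemma:theorem2} at $x_0$) that the paper leaves unstated.
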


\begin{proof} The fact that the complete fields span all tangent spaces implies holomorphic flexibility. Thus Theorem \ref{theorem:KalKutHolomorph} implies the result. 
\end{proof}

\begin{cor}[Theorem \ref{theorem:SufficientConditionsDP} - algebraic version] \label{cor:theorem2}
    Let $X$ be an affine algebraic manifold and $(\theta_1,\dots,\theta_n)$ a compatible $n$-tuple of algebraic vector fields. Assume that there are $\C$-complete algebraic vector fields $V_1,\dots,V_N$ on $X$ with algebraic flows such that the collection $\{\theta_1,V_1,...,V_N\}$ spans the tangent bundle $TX$. If there exist regular functions $f_i\in \ker V_i$ such that $f_i(x_0)=0$ and $d_{x_0}f_i(\theta_1)\neq 0$ for some point $x_0\in X$ and $i = 1, \dots, N$, then $X$ has the algebraic density property.
\end{cor}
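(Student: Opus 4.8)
The plan is to transcribe the proofs of Lemma~\ref{lemma:theorem2} and Theorem~\ref{theorem:holFlexDP} into the algebraic category and to conclude via Theorem~\ref{theorem:KalimanKutzschebauch}. From the proof of Theorem~\ref{theorem:submodule}, the compatible $n$-tuple $(\theta_1,\dots,\theta_n)$ supplies a non-zero ideal $\tilde{I}\subset\mathbb{C}[X]$ for which $L_0:=\tilde{I}\,\theta_1$ is a $\mathbb{C}[X]$-submodule of $\mathrm{VF}_{alg}(X)$ contained in $\mathrm{Lie}_{alg}(X)$; this is the seed of the submodule I would feed into Theorem~\ref{theorem:KalimanKutzschebauch}.

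First I would choose the base point with some care. The set $U\subseteq X$ of points $x$ at which $\{\theta_1,V_1,\dots,V_N\}$ spans $T_xX$ and $d_xf_i((\theta_1)_x)\neq 0$ for every $i$ is Zariski open, and it is non-empty because it contains the point $x_0$ from the hypothesis (note that $d_xf_i((\theta_1)_x)\neq 0$ already forces $(\theta_1)_x\neq 0$). Since $\tilde{I}\neq 0$, the zero locus $V(\tilde{I})$ is a proper closed subvariety, so $U\setminus V(\tilde{I})$ is non-empty; I would pick a point there and rename it $x_0$. Replacing each $f_i$ by $f_i-f_i(x_0)$ --- still a regular function lying in $\ker V_i$, with unchanged differential at $x_0$ --- I may assume $f_i(x_0)=0$ for all $i$, while $x_0\notin V(\tilde{I})$.

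Next I would record that $X$ is $\mathrm{Aut}_{alg}(X)$-homogeneous: the complete algebraic vector fields $\theta_1,V_1,\dots,V_N$ have algebraic flows and span $T_xX$ for every $x\in X$, so the group of automorphisms they generate acts transitively on $X$ (cf.\ \cite{FlexibleVarieties}). For each $i$, the field $f_iV_i$ is complete with algebraic flow $\varphi^i_t\in\mathrm{Aut}_{alg}(X)$, namely $\varphi^i_t(p)=\Phi^i(t\,f_i(p),p)$ where $\Phi^i\colon\mathbb{A}^1\times X\to X$ is the algebraic flow of $V_i$; moreover $\varphi^i_t$ fixes $x_0$ because $f_i(x_0)=0$ and $f_i\in\ker V_i$. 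I would put $L_i:=(\varphi^i_t)^{\ast}(L_0)$: since $(\varphi^i_t)^{\ast}$ is an automorphism of the Lie algebra $\mathrm{VF}_{alg}(X)$ that carries complete algebraic fields to complete algebraic fields and the ideal $\tilde{I}$ to an ideal, each $L_i$ is again a $\mathbb{C}[X]$-submodule contained in $\mathrm{Lie}_{alg}(X)$, and hence so is $L:=L_0+L_1+\dots+L_N$.

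Finally I would compute the fiber of $L$ over $x_0$. Because $x_0\notin V(\tilde{I})$ one has $(L_0)_{x_0}=\mathbb{C}\,(\theta_1)_{x_0}$, and because $\varphi^i_t$ fixes $x_0$, Lemma~\ref{calculation} applied to $f_iV_i$ yields $(L_i)_{x_0}=\mathbb{C}\bigl((\theta_1)_{x_0}-t\,d_{x_0}f_i((\theta_1)_{x_0})\,(V_i)_{x_0}\bigr)$. Choosing $t\neq 0$ and using $d_{x_0}f_i((\theta_1)_{x_0})\neq 0$, this places $(V_i)_{x_0}$ in $L_{x_0}$ for every $i$; together with $(\theta_1)_{x_0}\in L_{x_0}$ and the spanning hypothesis, $L_{x_0}=T_{x_0}X$, which in particular contains a generating set, so Theorem~\ref{theorem:KalimanKutzschebauch} gives the algebraic density property. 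I expect the only genuinely delicate step to be the selection of $x_0$: keeping the open spanning and non-degeneracy conditions while landing outside the zero locus of the ideal coming from Theorem~\ref{theorem:submodule}, and then re-normalizing the $f_i$ by constants so that $x_0$ becomes a fixed point of the relevant flows. The rest is the algebraic mirror of the holomorphic argument, using that the flow of $fV$ with $V$ complete algebraic and $f\in\ker V$ regular consists of algebraic automorphisms.
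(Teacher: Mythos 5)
Your route is the same as the paper's: use Theorem \ref{theorem:submodule} to get a non-zero ideal $\tilde I$ with $\tilde I\,\theta_1\subset \mathrm{Lie}_{alg}(X)$, observe that the flow of $f_iV_i$ is the algebraic map $\Phi^i(t f_i(p),p)$ (this reparametrization is exactly the one point the paper's own three-line proof singles out), pull the submodule back by these automorphisms fixing the base point, compute the fiber with Lemma \ref{calculation}, and conclude via Theorem \ref{theorem:KalimanKutzschebauch}. Your write-up is in fact more careful than the paper's in the places that need care (moving the base point off the zero locus of $\tilde I$ while keeping the open non-degeneracy conditions, renormalizing $f_i$ by the constant $f_i(x_0)$ so that $x_0$ is fixed by the flows, checking that pullback by an algebraic automorphism preserves $\mathrm{Lie}_{alg}(X)$ and submodule structure); all of that is correct.

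The one step that does not follow from the hypotheses is your justification of $\mathrm{Aut}_{alg}(X)$-homogeneity: you assert that $\theta_1$ "has an algebraic flow", but the statement only grants algebraic flows for $V_1,\dots,V_N$; $\theta_1$ is merely a complete algebraic field, and its flow map $\C\times X\to X$ need not be algebraic (in the paper's own application, the root field of the Danielewski surface is $x\frac{\partial}{\partial x}-y\frac{\partial}{\partial y}$, whose flow involves $e^t$). Since the $V_i$ alone are not assumed to span $TX$ --- that is precisely why $\theta_1$ is included in the spanning collection --- transitivity of the group generated by the algebraic flows cannot be obtained simply by citing \cite{FlexibleVarieties}; one either has to argue homogeneity from the $V_i$ (and suitable multiples $fV_i$, $f\in\ker V_i$, or conjugates) by showing every orbit meets the locus where the $V_i$ span, or use that in the applications the individual time maps of $\theta_1$ are algebraic automorphisms. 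To be fair, the paper's proof of Corollary \ref{cor:theorem2} elides this point entirely (it only addresses the algebraicity of the flows of $f_iV_i$), so your argument matches the published one everywhere except that you made the implicit homogeneity step explicit and supported it with a claim the hypotheses do not provide; as written, that step is the gap to repair.
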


\begin{proof}
    The idea of the proof is similar to the holomorphic case (cf. Lemma \ref{lemma:theorem2}). We only have to show that the flow of $f_iV_i$ is algebraic. Let $\phi_i(x,t)$ denote the flow of $V_i$. Then the flow of $f_iV_i$ is $\phi_i(x,f_i(x)t)$ and thus algebraic.
\end{proof}

\section{Applications}
\subsection{Danielewski surfaces}
Given a polynomial $p\colon\mathbb{C}\to \mathbb{C}$ with simple zeros, we define the variety
\[ D_p:=\{ (x,y,z)\in \mathbb{C}^3: xy=p(z)\} \]
called \textit{Danielewski surface}. This is an algebraic manifold, since $p$ has only simple zeros. Furthermore, it is well-known that $D_p$ has the algebraic density property \cite{KalKutHyperSurface}*{Theorem 1} despite the lack of compatible pairs of complete algebraic vector fields. In this section we give a new, shorter proof of this fact. We show the existence of a compatible 3-tuple and apply the Generalized Kaliman--Kutzschebauch criterion (Theorem \ref{theorem:SufficientConditionsDP}).

The following three are complete algebraic vector fields tangent to the surface $D_p$:
\begin{align*}
\theta_1 = x\deriv{x} - y\deriv{y}\\
\theta_2 = p'(z)\deriv{x} + y\deriv{z}\\
\theta_3 = p'(z)\deriv{y} + x \deriv{z}.
\end{align*}

\begin{lemma}\label{lemma:Danielewski:compatible}
The vector fields $\theta_1,\theta_2$ and $\theta_3$ form a compatible 3-tuple $(\theta_1,\theta_2,\theta_3)$.
\end{lemma}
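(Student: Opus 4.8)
The plan is to verify the two conditions in Definition~\ref{def:holomorphicCompatibleTuple} (in its algebraic form) for the explicit tuple $(\theta_1,\theta_2,\theta_3)$ on $D_p$. First I would exhibit the rooted $S$-admissible tree. The natural choice is the tree with root $\theta_1$ and two children $\theta_2,\theta_3$, i.e. the edges $\relation{\theta_2}{\theta_1}{a_{\theta_2}}$ and $\relation{\theta_3}{\theta_1}{a_{\theta_3}}$. To populate $\epsilon$ I need, for $j=2,3$, a regular function $a_{\theta_j}\in(\ker\theta_j^2\setminus\ker\theta_j)\cap\ker\theta_1$. Since $\theta_1=x\deriv{x}-y\deriv{y}$ is (up to scaling) the weight field giving $x$ weight $1$ and $y$ weight $-1$, $\ker\theta_1$ is spanned by monomials $x^ay^b$ with $a=b$, together with $z$; in particular $xy=p(z)$ and $z$ itself lie in $\ker\theta_1$. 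A direct computation gives $\theta_2(z)=y$, $\theta_2(x)=p'(z)$, so $\theta_2(xy)=p'(z)y+xy\cdot 0$... more carefully $\theta_2(xy)=\theta_2(x)y+x\theta_2(y)=p'(z)y$ and $\theta_2(p(z))=p'(z)\theta_2(z)=p'(z)y$, consistent with tangency. A promising candidate is $a_{\theta_2}:=x$: one checks $\theta_2(x)=p'(z)\notin\ker\theta_2$ in general (so $x\notin\ker\theta_2$), while $\theta_2(p'(z))=p''(z)y$ need not vanish, so $x$ may fail to be in $\ker\theta_2^2$. I would instead try $a_{\theta_2}:=xy=p(z)$: then $\theta_2(p(z))=p'(z)y\neq 0$ so $a_{\theta_2}\notin\ker\theta_2$, and $\theta_2^2(p(z))=\theta_2(p'(z)y)=p''(z)y^2+p'(z)\cdot 0$ — still not obviously zero. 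The clean choice is to use $z$: $\theta_2(z)=y$, $\theta_2^2(z)=\theta_2(y)=0$, so $z\in\ker\theta_2^2\setminus\ker\theta_2$, and $z\in\ker\theta_1$. Symmetrically $z\in\ker\theta_3^2\setminus\ker\theta_3$ and $z\in\ker\theta_1$. So set $a_{\theta_2}=a_{\theta_3}=z$; condition~(2) is satisfied.

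For condition~(1) I must produce a non-zero ideal $I\subset\mathbb{C}[D_p]$ contained in $\operatorname{span}(\ker\theta_1\cdot\ker\theta_2\cdot\ker\theta_3)$. Here I would compute the three kernels on $\mathbb{C}[D_p]=\mathbb{C}[x,y,z]/(xy-p(z))$ and show that products of suitable kernel elements generate, say, the ideal $(x)$, $(y)$, or $(p'(z))$ — any non-zero ideal suffices. Note $\ker\theta_2\supseteq\mathbb{C}[x,y]$-combinations killed by $p'(z)\deriv{x}+y\deriv{z}$; explicitly $xy-p(z)\in\ker\theta_2$ (automatic, it is the defining relation), and more usefully $y\in\ker\theta_3$ (since $\theta_3(y)=p'(z)$... no: $\theta_3(y)=p'(z)\neq 0$). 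Let me reconsider: $\theta_2(y)=0$, so $y\in\ker\theta_2$; $\theta_3(x)=0$, so $x\in\ker\theta_3$; and every function of $z$ lies in $\ker\theta_1$... no, $\theta_1(z)=0$ so indeed $\mathbb{C}[z]\subset\ker\theta_1$. Also $x^ay^a\in\ker\theta_1$. Thus the product $\mathbb{C}[z]\cdot y\cdot x = $ functions of the form $q(z)xy=q(z)p(z)$, which spans the ideal $(p(z))$ inside $\mathbb{C}[z]$-multiples... but I want an ideal of $\mathbb{C}[D_p]$, so I should be more generous: take kernel elements $\alpha\in\ker\theta_1$, $\beta\in\ker\theta_2$, $\gamma\in\ker\theta_3$ and show their span contains $(x)\cdot(y)=(p(z))$ or better. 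A cleaner route: $x\in\ker\theta_3$, $y\in\ker\theta_2$, and $1\in\ker\theta_1$, giving $xy=p(z)\in\operatorname{span}(\cdots)$; similarly scaling by $z^k\in\ker\theta_1$ gives all $z^kp(z)$; and using $x^2y\in$? — here $x^2y=x\cdot xy=xp(z)$, with $x\in\ker\theta_3$, $xy\in\ker\theta_1\cap\ker\theta_2$... this shows $xp(z)$ is in the span, and symmetrically $yp(z)$, and then $(p(z))$ as an ideal of $\mathbb{C}[D_p]$ is contained in the span. Since $p\not\equiv 0$, $(p(z))$ is a non-zero ideal.

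The main obstacle I anticipate is condition~(1): confirming that the span of the triple product of kernels actually contains a \emph{genuine} non-zero ideal of $\mathbb{C}[D_p]$ (not merely a non-zero subspace), which requires a careful bookkeeping of which monomials $x^ay^bz^c$ (modulo $xy=p(z)$) can be written as $\alpha\beta\gamma$ with $\alpha\in\ker\theta_1$, $\beta\in\ker\theta_2$, $\gamma\in\ker\theta_3$, and then checking this subspace is stable under multiplication by all of $\mathbb{C}[D_p]$. A safe strategy is to identify one principal ideal, e.g. $(x^2)$ or $(p(z))$, generated by elements manifestly in the span, and verify directly that multiplying such a generator by $x$, $y$, $z$ stays in the span (using that $\ker\theta_i$ are themselves not ideals but that, e.g., $x\cdot(\ker\theta_3)\subseteq\ker\theta_3$ fails, so one instead absorbs extra factors into the $\ker\theta_1$ slot since $\ker\theta_1$ contains $\mathbb{C}[z]$ and all balanced monomials). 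Once both conditions are in place, the tuple is compatible by definition, and Lemma~\ref{lemma:Danielewski:compatible} follows.
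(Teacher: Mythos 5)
Your proof is correct and follows essentially the same route as the paper: the admissible tree has root $\theta_1$ with the two edges $\relation{\theta_2}{\theta_1}{z}$ and $\relation{\theta_3}{\theta_1}{z}$ (since $\theta_2^2(z)=\theta_2(y)=0$, $\theta_3^2(z)=\theta_3(x)=0$, $\theta_1(z)=0$), and condition (1) comes from $x\in\ker\theta_3$, $y\in\ker\theta_2$, $z\in\ker\theta_1$. Two small remarks: your aside ``$xy\in\ker\theta_1\cap\ker\theta_2$'' is false ($\theta_2(xy)=p'(z)y\neq 0$), but it is not needed, since $xp(z)=x^2\cdot y\cdot 1$ factors directly through the kernels; and the ``main obstacle'' you anticipate for condition (1) is vacuous, because $\mathbb{C}[x]\subseteq\ker\theta_3$, $\mathbb{C}[y]\subseteq\ker\theta_2$, $\mathbb{C}[z]\subseteq\ker\theta_1$ already show every monomial $x^ay^bz^c$ lies in $\ker\theta_1\cdot\ker\theta_2\cdot\ker\theta_3$, so the span is all of $\mathbb{C}[D_p]$ and any non-zero ideal (even the unit ideal) will do --- which is exactly the paper's one-line argument.
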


\begin{proof}
Define the functions $f_x(x,y,z)=x$, $f_y(x,y,z)=y$ and $f_z(x,y,z)=z$. We see that $f_z\in \ker \theta_1$ and
\[ f_z\in \ker \theta_i^2\setminus \ker \theta_i\]
for $i=2,3$. Hence there exists a $\{\theta_1,\theta_2,\theta_3\}$-admissible rooted tree $(T,\pi,\epsilon)$ with root $\theta_1$. More precisely, the map $\epsilon:\mathrm{Edge}(T)\to\mathbb{C}[D_p]$ can be defined by
\[\epsilon(\theta_2,\theta_1):=f_z,\quad \epsilon(\theta_3,\theta_1):=f_z.\]
Moreover, we have $x\in \ker \theta_3, y\in \ker \theta_2$ and $z \in \ker \theta_1$, which shows the existence of a non-zero ideal in
\[ \mathrm{span}\left( \ker\theta_1\cdot \ker \theta_2 \cdot\ker \theta_3\right)\]
and this finishes the proof.
\end{proof}
\begin{lemma}\label{lemma:Danielewski:tangentBundle}
The vector fields $\theta_1,\theta_2$ and $\theta_3$ span the tangent bundle $TD_p$.
\end{lemma}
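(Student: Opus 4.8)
The plan is to verify that the three vector fields $\theta_1, \theta_2, \theta_3$ span $T_{(x,y,z)}D_p$ at every point of the surface, then conclude by a local-coordinate argument. First I would write each $\theta_i$ as a vector in the ambient $\C^3$, restricted to the tangent plane $T_pD_p = \ker(y\,dx + x\,dy - p'(z)\,dz)$, which is two-dimensional at every point since $D_p$ is smooth. So it suffices to show that at each point at least two of the $\theta_i$ are linearly independent. The natural approach is a case distinction according to which of the coordinates $x,y,z$ (and which value of $p'(z)$) vanishes.

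The key cases I expect to handle are: (a) the generic case where $x \neq 0$ or $y \neq 0$, where $\theta_1 = x\partial_x - y\partial_y$ is nonzero and one checks it is not proportional to $\theta_2$ or $\theta_3$ — e.g. comparing the $\partial_z$-components ($0$ for $\theta_1$ versus $y$ or $x$ for $\theta_2, \theta_3$), so if say $y \neq 0$ then $\theta_1$ and $\theta_2$ are independent, and if $x \neq 0$ then $\theta_1$ and $\theta_3$ are; and (b) the case $x = y = 0$, which forces $p(z) = 0$, hence $z$ is a simple zero of $p$ so $p'(z) \neq 0$; here $\theta_2 = p'(z)\partial_x + 0$ and $\theta_3 = p'(z)\partial_y + 0$ are manifestly independent. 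Since these cases exhaust $D_p$ (if $x = 0$ and $y \neq 0$ we are in case (a), etc.), the three fields span $T_pD_p$ everywhere.

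I would phrase the conclusion cleanly by noting that $\{\theta_1, \theta_2, \theta_3\}$ is a generating set for the tangent module: a collection of vector fields spans the tangent bundle of a manifold $X$ iff at each point their values span the tangent space, which is exactly what the case analysis establishes. The main (very mild) obstacle is just bookkeeping of the degenerate loci — making sure the case where two of $x, y, p'(z)$ vanish simultaneously cannot occur on $D_p$: indeed $x = y = 0$ gives $p(z) = 0$ and then $p'(z) \neq 0$ by simplicity of the zeros, so $p'(z) = 0$ together with $x = 0$ (or $y = 0$) is impossible on the surface. Beyond that the argument is a routine rank computation, and no deeper input is needed.
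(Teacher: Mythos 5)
Your proof is correct and is essentially the same argument as the paper's: a pointwise rank check showing two of the three tangent fields are independent at every point, using that simple zeros of $p$ prevent $p(z)$ and $p'(z)$ from vanishing simultaneously. The only difference is cosmetic --- you split cases on whether $x=y=0$, while the paper splits on whether $p'(z)=0$ --- and both case divisions close the argument in the same way.
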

\begin{proof}
Let $\vec{x}=(x,y,z)\in D_p$ be a point with $p'(z)\neq 0$. Then the vector fields $\theta_2$ and $\theta_3$ span the tangent space $T_{\vec{x}}D_p$. It remains to consider $\vec{y}=(x,y,z)\in D_p$ with $p'(z)= 0$. Observe that we have $p(z)\neq 0$ for such points, since $p$ has only simple zeros. This implies $xy\neq 0$ and, in particular, $x\neq 0$ and $y\neq 0$. Therefore, the vector fields $\theta_1$ and $\theta_2$ span the tangent space $T_{\vec{y}}D_p$.
\end{proof}

The following is \cite{KalKutHyperSurface}*{Theorem 1} in the special case where $n=1$.
\begin{cor}
The Danielewski surface $D_p$ has the algebraic density property.
\end{cor}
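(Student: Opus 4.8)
The plan is to apply the algebraic version of Theorem~\ref{theorem:SufficientConditionsDP} (Corollary~\ref{cor:theorem2}) to the data already assembled in Lemmas~\ref{lemma:Danielewski:compatible} and~\ref{lemma:Danielewski:tangentBundle}. Lemma~\ref{lemma:Danielewski:compatible} supplies the compatible $3$-tuple $(\theta_1,\theta_2,\theta_3)$ rooted at $\theta_1$, so that hypothesis of the corollary is in place with $n=3$. For the spanning complete fields I would take $V_1:=\theta_2$ and $V_2:=\theta_3$: both are locally nilpotent derivations of $\mathbb{C}[D_p]$ --- a routine check on the generators $x,y,z$ --- hence $\C$-complete with polynomial, in particular algebraic, flows; and by Lemma~\ref{lemma:Danielewski:tangentBundle} the collection $\{\theta_1,V_1,V_2\}=\{\theta_1,\theta_2,\theta_3\}$ already spans the tangent bundle $TD_p$.

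It then remains to produce one point $x_0\in D_p$ together with functions $f_1\in\ker\theta_2$ and $f_2\in\ker\theta_3$ satisfying $f_i(x_0)=0$ and $d_{x_0}f_i(\theta_1)\neq 0$ for $i=1,2$. Since $p$ has simple zeros it does not vanish identically, so I would pick $c\in\C$ with $p(c)\neq 0$ and set $x_0:=(1,p(c),c)$, which lies on $D_p$ and has both its $x$- and $y$-coordinate nonzero. Because $\theta_2(y)=0$ and $\theta_3(x)=0$, the translated coordinate functions $f_1:=y-p(c)$ and $f_2:=x-1$ lie in $\ker\theta_2$ and $\ker\theta_3$ respectively and vanish at $x_0$; moreover $\theta_1(f_1)=\theta_1(y)=-y$ and $\theta_1(f_2)=\theta_1(x)=x$, so $d_{x_0}f_1(\theta_1)=-p(c)\neq 0$ and $d_{x_0}f_2(\theta_1)=1\neq 0$. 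Corollary~\ref{cor:theorem2} then yields the algebraic density property of $D_p$.

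I do not anticipate a genuine obstacle here: once the two preceding lemmas are available, the corollary does essentially all of the work and the rest is a careful but routine verification. The one point that needs attention is that a nonzero element of $\ker\theta_2$ (resp.\ $\ker\theta_3$) vanishing at $x_0$ must still have nonzero $\theta_1$-derivative there --- which is why one should use the translates $y-p(c)$ and $x-1$ rather than $y$ and $x$ themselves, and should choose $x_0$ off the hyperplanes $\{x=0\}$ and $\{y=0\}$. Conceptually, the whole force of the argument is that the classical failure of $D_p$ to possess a compatible pair is circumvented by the compatible $3$-tuple; arguing instead through Theorem~\ref{theorem:generalizedTheorem2} would additionally require establishing $\mathrm{Aut}_{alg}(D_p)$-homogeneity and exhibiting a generating set, which is the longer road.
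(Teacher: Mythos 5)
Your proposal is correct and follows essentially the same route as the paper: both apply Corollary~\ref{cor:theorem2} to the compatible $3$-tuple of Lemma~\ref{lemma:Danielewski:compatible} and the spanning fields of Lemma~\ref{lemma:Danielewski:tangentBundle}. The only difference is cosmetic: the paper obtains the kernel functions $f_i$ abstractly from Lemma~\ref{lemma:usefulProperty}, whereas you exhibit them explicitly ($f_1=y-p(c)$, $f_2=x-1$ at $x_0=(1,p(c),c)$), which is a valid, slightly more hands-on verification of the same hypothesis.
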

\begin{proof}
The conditions of Corollary \ref{cor:theorem2} are satisfied by Lemmas \ref{lemma:Danielewski:compatible}, \ref{lemma:Danielewski:tangentBundle} and \ref{lemma:usefulProperty}. Therefore $D_p$ has the algebraic density property.
\end{proof}

\subsection{Preparation for Gromov--Vaserstein fibers}
In this section we take a look at two factorization problems or rather, at a by-product of their proofs. Every holomorphic mapping $f\colon\C^l \to \SlC$ can be written as a finite product
$$ f= M_1\cdots M_K,$$
where $M_i\colon\C^l \to \SlC$ is a holomorphic mapping of the respective form
$$ \begin{pmatrix}
    1 & & 0 \\ & \ddots & \\ \star & & 1
\end{pmatrix} \quad \text{and}\quad  \begin{pmatrix}
    1 & & \star \\ & \ddots & \\ 0 & & 1
\end{pmatrix}$$
for $i$ odd and $i$ even, respectively. This was proved by Ivarsson and Kutzschebauch \cite{SLnPaper}.

Similarly, every holomorphic mapping
\[ f\colon\C^l\to \SpC \]
can be factorized into a finite product
\[ f = N_1 \cdots N_K, \]
where $N_i\colon\C^l \to \SpC$ is a holomorphic mapping of the respective form
\[ \begin{pmatrix}
    I_n & 0 \\ A & I_n
\end{pmatrix} \quad \text{and}\quad  \begin{pmatrix}
    I_n & A \\ 0&I_n
\end{pmatrix}, \]
where $I_n$ denotes the $n\times n$-identity matrix, $0$ the $n\times n$-zero matrix and $A$ is a symmetric $n\times n$-matrix, i.e. $A^T=A.$ This result can be found in \cite{Schott}.

Both proofs have in common that a suitable polynomial mapping $P=(P_1,\dots,P_k)\colon\C^m\to \C^k$, $m>k$, satisfies \textit{nice enough} properties that justify an application of the Oka principle (for more details on how the Oka principle can be applied here, we recommend the papers mentioned above.)
In the context of this paper, we are only interested in the smooth fibers $P^{-1}(y)$ of these mappings.

It is sometimes useful to distinguish between the symplectic and the special linear case. We shall therefore write $P_{sp}$ and $P_{sl}$ for the respective mappings.

\begin{lemma}
    Each smooth fiber $P^{-1}(y)$ is biholomorphic to a product $ \mathcal{G}\times \C^L,$
    where 
 $$\mathcal{G} = \left\lbrace z=(z_1,\dots,z_m)\in \CC{m}: p(z)=0\right\rbrace $$
 is a smooth variety for some polynomial mapping $p=(p_1,\dots,p_l)\colon\CC{m}\to \CC{l}$, $m>l$, and some positive integer $L$.
 
 We have $l=n$ in the symplectic case and $l=1$ in the case of the special linear group.
\end{lemma}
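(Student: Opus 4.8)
The plan is to unwind the two factorization constructions of Ivarsson--Kutzschebauch and Schott and exhibit the explicit product structure. The key observation is that in both proofs the polynomial map $P\colon\C^m\to\C^k$ is built so that the last few coordinate functions of $\C^m$ can be solved for explicitly in terms of the others on a smooth fiber, which is precisely what produces a trivial affine factor $\C^L$. Concretely, I would first recall the shape of $P=P_{sl}$: the condition that a product of two elementary (shear) matrices $M_1 M_2$ of the prescribed upper/lower form realizes a given target is a system of polynomial equations in the entries of the shear blocks, and by construction the target matrix has $\det = 1$, so one of the scalar equations is redundant -- this is the source of $l=1$ in the special linear case. For $P=P_{sp}$, the target lies in $\mathrm{Sp}_{2n}(\C)$, so the symmetry/symplectic relations cut the codimension down to $l=n$ symmetric-matrix equations, which is the source of $l=n$ in the symplectic case.

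Next I would make the splitting precise. On a smooth fiber $P^{-1}(y)$, group the coordinates of $\C^m$ into a block $z=(z_1,\dots,z_m)$ constrained by $p(z)=0$ (with $p=(p_1,\dots,p_l)$ the ``core'' relations) and a complementary block of $L=m-(\text{the number of coordinates appearing in }p)$ coordinates, say $w_1,\dots,w_L$, which are \emph{free}: the defining equations of $P^{-1}(y)$ determine the remaining coordinates as polynomial (in fact affine-linear in each $w_j$) functions of $z$ and $w$, with polynomial inverse. The map $(z,w)\mapsto(\text{recovered coordinates})$ is then a polynomial isomorphism $P^{-1}(y)\cong \mathcal{G}\times\C^L$ where $\mathcal{G}=\{z\in\C^m:p(z)=0\}$. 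Smoothness of the fiber $P^{-1}(y)$ forces $\mathcal{G}$ to be a smooth variety, since the Jacobian criterion for $\mathcal{G}$ is inherited from that of $P^{-1}(y)$ under the isomorphism (one has to check $0$ is a regular value of $p$, but this follows because $y$ is a regular value of $P$ and the two Jacobians agree up to the invertible block coming from the free variables). Finally I would verify $m>l$: this is immediate since $P$ is a genuine overdetermination-free parametrization, i.e. the fibers are positive-dimensional, and in fact $\mathcal{G}$ itself is positive-dimensional in both cases.

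The main obstacle is bookkeeping rather than conceptual: one has to carefully extract from \cite{SLnPaper} and \cite{Schott} the precise list of polynomial equations defining $P_{sl}$ and $P_{sp}$, identify which subset of the $m$ coordinates genuinely enters the ``core'' relations $p$, and confirm that the remaining $L$ coordinates can be isolated with a \emph{polynomial} (not merely rational or holomorphic) inverse so that the splitting is an isomorphism of affine varieties. The symplectic case is the more delicate one, because the number $k$ of equations defining $P_{sp}$ grows with the length of the factorization, yet after the reduction only $l=n$ equations survive in $p$; I would need to track how the extra equations either become automatically satisfied or serve only to pin down the free coordinates. Once this extraction is done, the biholomorphism $P^{-1}(y)\cong\mathcal{G}\times\C^L$ and the smoothness of $\mathcal{G}$ follow formally, and the counts $l=1$ (special linear) and $l=n$ (symplectic) drop out of the structure of the respective target groups $\mathrm{SL}_n(\C)$ and $\mathrm{Sp}_{2n}(\C)$.
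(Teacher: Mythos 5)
Your elimination mechanism is the right one, and it is essentially what the proofs cited by the paper (and the worked example the paper gives right after this lemma) actually do: on a fiber over a point $y\neq 0$ one solves part of the defining equations for entries of the last elementary factor, the variables that are left untouched contribute the factor $\C^L$, and smoothness of $\mathcal{G}$ transfers through the resulting biholomorphism. The paper itself disposes of the lemma by citing \cite{SLnPaper} (Lemma 3.7) and \cite{Schott} (Lemmas 3.14 and 3.15), so reconstructing the elimination is a legitimate route.

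However, your justification of the counts $l=1$ and $l=n$ --- which are part of the statement --- rests on a mischaracterization of $P$. In both cases $P$ is \emph{not} the condition that a product of shears equals a prescribed matrix of $\SlC$ or $\SpC$; it is the projection of that product to a single row, namely $P^K(Z)=e_n^TM_1(Z_1)^{-1}\cdots M_K(Z_K)^{-1}\in\C^n$ in the special linear case and $P^K(Z)=e_{2n}^TM_1(Z_1)\cdots M_K(Z_K)\in\C^{2n}$ in the symplectic case. Hence there is no determinant relation among the components (the last row of a matrix in $\SlC$ is an arbitrary nonzero vector), and ``$\det=1$ makes one scalar equation redundant'' is not the source of $l=1$; likewise it is not the symplectic relations that cut the codimension to $n$. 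The correct bookkeeping is: in the special linear case one rewrites the fiber equation as $P^{K-1}(Z_1,\dots,Z_{K-1})=a\,M_K(Z_K)$ with some $a_i\neq 0$, solves $n-1$ of the $n$ components for entries of $Z_K$, and the single surviving equation $P^{K-1}_i=a_i$ defines $\mathcal{G}$, whence $l=1$; in the symplectic case the $2n$ components split into $P_f^{K}=a$ and $P_s^{K}=b-aZ_{K+1}$, the second block of $n$ equations is solved for $n$ entries of the symmetric matrix $Z_{K+1}$ (using $a\neq 0$), and the surviving block of $n$ equations gives $l=n$ (with additional free variables, e.g.\ the entries of $Z_1$ not lying in its last row, feeding into $\C^L$). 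With $P$ identified correctly the rest of your argument (polynomial elimination, inherited smoothness, $m>l$) goes through, but as written the step that is supposed to produce the values of $l$ would fail.
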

\begin{proof}
    In the case of the special linear group, this statement is implied by the proof of \cite{SLnPaper}*{Lemma 3.7}. And in the symplectic case, it follows by Lemma 3.14 and Lemma 3.15 in \cite{Schott}.
\end{proof}

\textbf{Convention:} In the same spirit as before, we shall sometimes write $\mathcal{G}_{sp}$ and $\mathcal{G}_{sl}$ for the respective varieties. 
Moreover, since we are only interested in smooth fibers and thus only in smooth varieties $\mathcal{G}$, we refrain from specifying this every time. We will see a classification of the smooth varieties in the next few subsections.

 For $1\leq i_0<\dots<i_l\leq m$, we define the $(l+1)$-tuple of variables $y=(z_{i_0},\dots,z_{i_l})$ and the corresponding vector field
\begin{align}\label{beautyfulVectorField}
    D_y(p):=\det \begin{pmatrix}
    \Deriv{z}{i_0} & \cdots & \Deriv{z}{i_l}\\
    \Deriv{z}{i_0}p_1(z) & \cdots & \Deriv{z}{i_l}p_1(z)\\
    \vdots && \vdots \\
    \Deriv{z}{i_0}p_l(z) & \cdots & \Deriv{z}{i_l}p_l(z)
\end{pmatrix}.
\end{align}

\begin{lemma}
    The vector fields of the form (\ref{beautyfulVectorField}) are tangent to $\mathcal{G}.$
\end{lemma}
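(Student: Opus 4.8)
The plan is to show that each vector field $D_y(p)$ defined by the determinant formula \eqref{beautyfulVectorField} annihilates every defining function $p_j$ of $\mathcal{G}$, which immediately implies tangency along $\mathcal{G}$ since a derivation that kills the ideal of $\mathcal{G}$ restricts to a vector field on $\mathcal{G}$. Writing $\partial_k := \partial/\partial z_{i_k}$ for brevity, the vector field $D_y(p)$ is the formal cofactor expansion of the determinant along its first row, so applying $D_y(p)$ to a function $g$ produces exactly the determinant of the $(l+1)\times(l+1)$ matrix obtained from the matrix in \eqref{beautyfulVectorField} by replacing the first row $(\partial_0,\dots,\partial_l)$ with the row of partial derivatives $(\partial_0 g,\dots,\partial_l g)$.

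So the key computation is: for each $j\in\{1,\dots,l\}$, applying $D_y(p)$ to $p_j$ gives the determinant of a matrix whose first row is $(\partial_0 p_j,\dots,\partial_l p_j)$ and whose $(j+1)$-st row is also $(\partial_0 p_j,\dots,\partial_l p_j)$. A determinant with two equal rows vanishes identically, hence $D_y(p)(p_j)=0$ as a polynomial identity on $\CC{m}$, and in particular on $\mathcal{G}$. This shows $D_y(p)$ is tangent to $\mathcal{G}$ at every smooth point. First I would state the cofactor-expansion interpretation carefully (it is the standard fact that such ``bordered determinant'' differential operators act on a function by substituting its gradient into the top row), then invoke the two-equal-rows vanishing for each $j$.

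The only genuinely non-routine point is making precise what ``tangent to $\mathcal{G}$'' means and why killing the generators $p_1,\dots,p_l$ of the ideal is enough: at a smooth point $z\in\mathcal{G}$ the tangent space $T_z\mathcal{G}$ is the common kernel of $d_z p_1,\dots,d_z p_l$, and the value $(D_y(p))_z\in T_z\CC{m}=\CC{m}$ pairs to zero against each $d_z p_j$ precisely because $D_y(p)(p_j)$ vanishes at $z$. Since $\mathcal{G}$ is assumed smooth, this holds at every point of $\mathcal{G}$, so $D_y(p)$ restricts to a (polynomial, hence algebraic) vector field on $\mathcal{G}$. I expect no real obstacle here; the main thing to be careful about is keeping the index bookkeeping between the row of operators $(\partial_{i_0},\dots,\partial_{i_l})$ and the rows of gradients of the $p_j$ straight, and noting that the identity is a polynomial identity valid on all of $\CC{m}$, not merely on the zero set.
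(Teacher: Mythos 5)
Your proposal is correct and is essentially the paper's own argument: the paper's proof is the one-line observation that $D_y(p)(p_i)=0$ because applying the operator replaces the first row by the gradient of $p_i$, which duplicates the $(i+1)$-th row. Your additional remarks on why annihilating the defining functions gives tangency at smooth points are a routine (and correct) elaboration of the same approach.
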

\begin{proof}
   Observe that $D_y(p)(p_i)=0$, since the first row equals the $(i+1)$-th row.
\end{proof}

Let $\mathcal{T}=\{(z_{i_0},\dots,z_{i_l}):1\leq i_0<\dots<i_l\leq m\}$ be the set of all $(l+1)$-tuples and ${\mathcal{V}=\{D_y(p): y\in \mathcal{T}\}}$ the collection of vector fields of the form (\ref{beautyfulVectorField}). Furthermore, let $${\Gamma(\mathcal{V})=\{\alpha^*V: \alpha\in \mathrm{Aut}_{hol}(X), V\in \mathcal{V}\}}$$ denote the set of vector fields generated by $\mathcal{V}$.

\begin{prop}\label{prop:spanningVectorFields}
\begin{enumerate}[label=(\roman*)]
    \item There are complete holomorphic vector fields $V_1,\dots,V_N\in \Gamma(\mathcal{V})$ spanning the tangent bundle $T\mathcal{G}_{sl}.$
    \item There are complete holomorphic vector fields $V_1,\dots,V_N \in \Gamma(\mathcal{V})$ spanning the tangent bundle $T\mathcal{G}_{sp}$.
\end{enumerate}
\end{prop}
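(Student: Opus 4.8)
The plan is to reduce the statement to two facts valid at every smooth point $z\in\mathcal{G}$: (a) the values $\{D_y(p)(z):y\in\mathcal{T}\}$ already span $T_z\mathcal{G}$, and (b) enough of the fields $D_y(p)$ are $\C$-complete, possibly after pulling back by a holomorphic automorphism (this is the only reason to pass from $\mathcal{V}$ to $\Gamma(\mathcal{V})$). Point (a) is pure linear algebra: smoothness at $z$ means $\mathrm{Jac}_z\,p=\bigl(\Deriv{z}{j}p_i(z)\bigr)$ has full rank $l$, and then $T_z\mathcal{G}=\ker\mathrm{Jac}_z\,p\subset\CC{m}$. Expanding the determinant (\ref{beautyfulVectorField}) along its first row shows $D_y(p)(z)=\sum_{j=0}^{l}(-1)^{j}M_{y,j}(z)\,\Deriv{z}{i_j}$, where $M_{y,j}(z)$ is the $l\times l$ minor of $\mathrm{Jac}_z\,p$ on the columns of $y$ other than $z_{i_j}$; by the classical Cramer / exterior-algebra description of the kernel of a full-rank $l\times m$ matrix, these ``generalized cross products'', as $y$ runs over all $(l+1)$-subsets, span exactly $\ker\mathrm{Jac}_z\,p$. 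So spanning of $T\mathcal{G}$ by $\mathcal{V}$ is automatic, and the whole content of the proposition lies in the completeness in (b). Note also that $\mathcal{V}$ is already a finite set, so there is nothing to prune.

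For (b) I would treat the two cases separately, using the explicit shape of $p$ recalled in the next subsections. In the special linear case $l=1$, so $D_{(z_i,z_j)}(p)=(\Deriv{z}{j}p)\,\Deriv{z}{i}-(\Deriv{z}{i}p)\,\Deriv{z}{j}$, and for the (very restricted) polynomial $p$ produced by the factorization in \cite{SLnPaper} one checks directly that each such field has a polynomial — in fact affine — flow in the time parameter; hence they are complete and lie in $\mathcal{V}$ itself. Indeed this is essentially contained in \cite{SLnPaper}, where the $D_y(p)$ are used to build a dominating (composed) spray on $\mathcal{G}_{sl}$, and ``dominating'' is precisely our spanning statement (a): thus this case is little more than a reformulation.

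In the symplectic case $l=n$ and the defining equations are \emph{bilinear} (of the form $(Wz-b)_i=0$ with $W$ symmetric), so $D_y(p)$ is a polynomial vector field of degree at most $n$ moving only the $n+1$ coordinates occurring in $y$. Freezing the remaining coordinates turns $D_y(p)$ into a polynomial vector field on $\CC{n+1}$; the point is that, because of the bilinear structure, this field is a sum of commuting semisimple and locally nilpotent algebraic vector fields — its Jordan decomposition — and therefore has an entire flow, so $D_y(p)$ is complete. A safer alternative is to invoke the classification of the smooth varieties $\mathcal{G}_{sp}$ carried out in the following subsections: there are only finitely many ``shapes'' of $(n+1)$-tuples $y$ up to the evident symmetries, and on each model one writes down the flow explicitly, exactly as in \cite{Schott}, where these fields provide a dominating spray on $\mathcal{G}_{sp}$. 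Together with step (a) this yields complete fields inside $\mathcal{V}$ spanning $T\mathcal{G}_{sp}$; the passage to $\Gamma(\mathcal{V})$ is a safety valve allowing us, should some $D_y(p)$ needed for spanning at a bad point fail to be complete, to replace it by a pullback $\alpha^{*}D_{y'}(p)$ under an algebraic shear $\alpha$ of $\mathcal{G}_{sp}$.

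The main obstacle is exactly the completeness of the determinantal fields in the symplectic case: a polynomial vector field of degree $n$ is in general far from complete, so one must genuinely exploit the special (bilinear, symmetric) form of the equations $Wz=b$, either via the Jordan-decomposition argument above or by reducing, through the classification of $\mathcal{G}_{sp}$, to a bounded list of explicit models whose flows are computed by hand. A secondary, purely bookkeeping obstacle is to verify that at \emph{every} point the complete members of $\mathcal{V}\cup\Gamma(\mathcal{V})$ — rather than the whole family $\mathcal{V}$ — still span, which is where the freedom to pull back by automorphisms enters.
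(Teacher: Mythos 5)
Your step (a) is correct and unproblematic: at a smooth point the Jacobian of $p$ has full rank $l$, the determinantal vectors $D_y(p)(z)$ are exactly the Cramer-type vectors spanning $\ker \mathrm{Jac}_z\,p = T_z\mathcal{G}$, so the family $\mathcal{V}$ spans pointwise. Your treatment of the special linear case is also essentially right: since $P^K_i$ is linear in each variable, each field $D_{(z_i,z_j)}(P^K_i)$ is affine in its two moving variables with coefficients in the frozen ones, hence complete; this is precisely the content of Lemmas 5.2 and 5.3 of \cite{SLnPaper}, which is all the paper itself does for part (i) — its proof of this Proposition is a citation, not an argument.

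The symplectic case, however, contains a genuine gap, and it sits exactly where you located the difficulty. The claim that every $D_y(p)$ is complete because the bilinear structure gives it a decomposition into commuting semisimple and locally nilpotent algebraic vector fields is unjustified: such an additive Jordan decomposition exists only for \emph{locally finite} derivations, and a determinantal field of degree up to $n$ whose coefficients involve the moving variables is in general neither locally finite nor complete; bilinearity of the equations $Wz=b$ does not rescue this. If all members of $\mathcal{V}$ were complete, the statement would not need $\Gamma(\mathcal{V})$ at all; the reason the Proposition is phrased with automorphism pullbacks is that only certain fields $D_y(p)$ are complete, and one must check that these, together with suitable pullbacks $\alpha^*D_{y'}(p)$, still span at every point. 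That is exactly the content of \cite{Schott}*{Theorem 3.36}, which the paper cites for part (ii) and explicitly describes as a long and technical mixture of abstract arguments and concrete computations. Your ``safer alternative'' (write down the flows explicitly, ``exactly as in \cite{Schott}'') is therefore not an independent proof but a reference to that theorem, and the verification you dismiss as secondary bookkeeping --- that the \emph{complete} members of $\Gamma(\mathcal{V})$ span at every point of $\mathcal{G}_{sp}$ --- is the heart of the matter, not an afterthought.
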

\begin{proof}
     (i) See Lemma 5.2 and Lemma 5.3 in \cite{SLnPaper}. \\
     (ii) We refer to \cite{Schott}*{Theorem 3.36}, which is actually a very difficult and technical proof, since it involves both abstract arguments and many concrete calculations.    
\end{proof}

\begin{cor}
    A smooth fiber $P^{-1}(y)\cong \mathcal{G}\times \C^L$ has the density property.
\end{cor}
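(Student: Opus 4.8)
The plan is to deduce this corollary from the results already assembled in the paper, chiefly Theorem~\ref{theorem:SufficientConditionsDP} (equivalently Corollary~\ref{cor:theorem2} in the algebraic setting) together with Proposition~\ref{prop:spanningVectorFields}. Since $P^{-1}(y)\cong \mathcal{G}\times\C^L$, and the density property of a product $\mathcal{G}\times\C^L$ follows once we know $\mathcal{G}$ itself has the density property (iterate Theorem~\ref{theorem:holFlexDP}: if $\mathcal{G}$ has the density property it is holomorphically flexible, so $\mathcal{G}\times\C$ is, and by induction $\mathcal{G}\times\C^L$ is for every $L\ge 0$), it suffices to prove that $\mathcal{G}=\mathcal{G}_{sl}$ respectively $\mathcal{G}_{sp}$ has the density property. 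So the real content is an application of Theorem~\ref{theorem:SufficientConditionsDP} to $\mathcal{G}$.

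First I would produce a compatible $n$-tuple on $\mathcal{G}$. This is the analogue for the Gromov--Vaserstein fibers of Lemma~\ref{lemma:Danielewski:compatible}: one selects a small number of the determinantal vector fields $D_y(p)$ of the form~(\ref{beautyfulVectorField}) — which by the second Lemma of Section~4.2 are tangent to $\mathcal{G}$ and, after checking completeness, are $\C$-complete — together with explicit coordinate functions playing the roles of the $a_\theta$ and of the factors in $\operatorname{span}(\prod\ker\theta_i)$, and arranges them into a rooted $S$-admissible tree. For the special linear case ($l=1$) this should closely parallel the Danielewski computation; for the symplectic case ($l=n$) the determinantal fields and the verification that coordinate functions lie in the appropriate $\ker\theta_i^2\setminus\ker\theta_i$ and that a non-zero ideal sits inside $\operatorname{span}(\prod\ker\theta_i)$ require more bookkeeping but are still explicit polynomial identities. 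Second, I would invoke Proposition~\ref{prop:spanningVectorFields} to obtain $\C$-complete holomorphic vector fields $V_1,\dots,V_N\in\Gamma(\mathcal{V})$ spanning $T\mathcal{G}$. Third, for each $V_i$ I must exhibit a function $f_i\in\ker V_i$ with $f_i(x)=0$ and $d_xf_i(\theta_1)\ne 0$ at a common point $x$; since each $V_i=\alpha_i^*D_{y_i}(p)$ is (up to automorphism) a determinantal field and such fields annihilate any coordinate function among the chosen $(l+1)$-tuple $y_i$, natural candidates for $f_i$ are (pullbacks of) affine-linear combinations of coordinates vanishing at $x$ and transverse to $\theta_1$; genericity of $x$ together with Stein-ness lets one arrange all these conditions simultaneously, exactly as in the proof of Lemma~\ref{lemma:theorem2}. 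Then Theorem~\ref{theorem:SufficientConditionsDP} applies and $\mathcal{G}$ has the density property; running it back through the product reduction finishes the corollary.

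The main obstacle I anticipate is the symplectic case: verifying that a genuinely small subfamily of the determinantal vector fields $D_y(p_{sp})$, all of which must be $\C$-complete, can be organized into an $S$-admissible tree satisfying condition~(1) of the compatible-tuple definition — i.e.\ finding the explicit non-zero ideal $I\subset\operatorname{span}(\prod_{i}\ker\theta_i)$ and the edge functions $a_\theta\in\ker\pi(v)^2\setminus\ker\pi(v)$ — is a nontrivial algebraic task because $\mathcal{G}_{sp}$ is cut out by $n$ equations rather than one, so kernels of the $\theta_i$ are smaller and products of kernels are correspondingly more constrained. The companion difficulty is matching the $f_i$ to the spanning fields $V_i$ from Proposition~\ref{prop:spanningVectorFields}, whose construction in \cite{Schott}*{Theorem 3.36} is already described as ``very difficult and technical''; in practice one would likely read off the $f_i$ directly from that construction (each $V_i$ being associated with a particular index tuple $y_i$, and $f_i$ a coordinate or linear form in the complementary variables), rather than prove their existence abstractly. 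Once the tree and the functions $f_i$ are in hand, the remaining steps — completeness of the $\theta_i$, the dimension count $N\ge\dim\mathcal{G}$, choosing the generic base point off the zero locus of $I$ and where the $V_i$ span and the $d_xf_i(\theta_1)\ne 0$, and the product reduction — are routine.
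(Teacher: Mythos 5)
Your reduction runs through far more than the corollary needs, and the part you lean on is exactly the part you do not prove. The paper's proof is two lines: Proposition \ref{prop:spanningVectorFields} provides complete holomorphic vector fields spanning $T\mathcal{G}$, so $\mathcal{G}$ is holomorphically flexible; hence so is $\mathcal{G}\times\C^{L-1}$ (adjoin the complete fields $\partial/\partial t_i$), and Theorem \ref{theorem:holFlexDP} applied to $X=\mathcal{G}\times\C^{L-1}$ immediately gives the density property of $\mathcal{G}\times\C^{L}$. No compatible tuple on $\mathcal{G}$, no appeal to Theorem \ref{theorem:SufficientConditionsDP}, and no functions $f_i$ are required; this is precisely the remark in the introduction that the product case $M\geq 1$ is ``much easier'' than the density property of $\mathcal{G}$ itself.

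By contrast, you propose to first establish the density property of $\mathcal{G}$ and then iterate Theorem \ref{theorem:holFlexDP}. That intermediate statement is the paper's main new result (Theorem \ref{prop:existenceCompatibleTuple}), and its essential content --- the explicit compatible tuples on $\mathcal{G}_{sl}$ and $\mathcal{G}_{sp}$ of Theorems \ref{theorem:SLcompatible} and \ref{theorem:SPcompatible}, including completeness of the chosen fields, the edge functions in $\ker\theta^2\setminus\ker\theta$, and the nonzero ideal inside $\overline{\mathrm{span}\left(\prod_i\ker\theta_i\right)}$ --- is exactly what you defer as ``bookkeeping'' and list as your main obstacle. In particular, in the symplectic case with $K=2$, $n=2$ no compatible pair is known to exist and one needs the specific $3$-tuple $(V_1,V_2,V_3)$; ``paralleling the Danielewski computation'' does not produce it. So, read as a standalone proof of the corollary, your argument has a genuine gap at its central step, in addition to being a detour. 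A further unsupported (though true) step is the claim that the density property of $\mathcal{G}$ implies its holomorphic flexibility, which you use to start the iteration; it is nowhere established in the paper and is unnecessary, since flexibility is exactly what Proposition \ref{prop:spanningVectorFields} delivers directly.
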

\begin{proof}
    The variety $\mathcal{G}$ is holomorphically flexible by the previous lemma. Then, the claim follows by Theorem \ref{theorem:holFlexDP}.
\end{proof}

   We show that $\mathcal{G}$ also has the density property. The proof is based on an application of Theorem \ref{theorem:SufficientConditionsDP}.
   
\begin{theorem}\label{prop:existenceCompatibleTuple}
    \begin{enumerate}[label=(\roman*)]
        \item There is a compatible $m$-tuple on $\mathcal{G}_{sl} $ for $n\geq 3.$ In particular, $\mathcal{G}_{sl}$ has the density property.
        \item There is a compatible $m$-tuple on $\mathcal{G}_{sp}$ for $n\geq 2.$ In particular, $\mathcal{G}_{sp}$ has the density property.
    \end{enumerate}
\end{theorem}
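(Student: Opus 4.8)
The plan is to verify the hypotheses of Theorem \ref{theorem:SufficientConditionsDP} (via its algebraic incarnation Corollary \ref{cor:theorem2}) for both $\mathcal{G}_{sl}$ and $\mathcal{G}_{sp}$. By Proposition \ref{prop:spanningVectorFields} we already have complete algebraic vector fields $V_1,\dots,V_N\in\Gamma(\mathcal{V})$ spanning the tangent bundle $T\mathcal{G}$, and each $V_i=\alpha_i^*D_{y_i}(p)$ has an algebraic flow because $D_{y_i}(p)$ is a determinantal vector field of the form (\ref{beautyfulVectorField}) which is complete with algebraic flow (it is locally nilpotent, being a Jacobian derivation on an affine variety), and $\alpha_i$ is an algebraic automorphism. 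So the only two things that genuinely need to be produced are: first, a compatible $m$-tuple $(\theta_1,\dots,\theta_m)$ on $\mathcal{G}$; and second, functions $f_i\in\ker V_i$ with $f_i(x_0)=0$ and $d_{x_0}f_i(\theta_1)\neq 0$ at a common point $x_0$.

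For the compatible $m$-tuple I would take the $\theta_j$ to be, up to reindexing, the coordinate-restricted versions of the basic vector fields, built from the determinantal fields $D_y(p)$ of (\ref{beautyfulVectorField}) together with the obvious ``shear'' or ``translation'' vector fields coming from the product structure $P^{-1}(y)\cong\mathcal{G}\times\C^L$ — in the $\C^L$ directions these are just $\partial/\partial z_j$, which are complete with all coordinate functions in their kernels. The admissible rooted tree is then assembled exactly as in the Danielewski computation (Lemma \ref{lemma:Danielewski:compatible}): one chooses a coordinate $z_k$ that lies in $\ker\theta_1$ and simultaneously in $\ker\pi(v)^2\setminus\ker\pi(v)$ for each child $v$, and sets $\epsilon(v,\mathrm{root}):=z_k$, iterating down the tree. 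The ideal condition $I\subset\mathrm{span}\big(\prod_{i=1}^m\ker\theta_i\big)$ is obtained by exhibiting, for each $i$, a coordinate function lying in $\ker\theta_i$ whose product over $i$ is a non-zero regular function generating a non-zero ideal; here the explicit polynomial defining equations of $\mathcal{G}_{sl}$ (one equation, $l=1$) and $\mathcal{G}_{sp}$ ($l=n$ equations) make it transparent which coordinates kill which $\theta_i$, and the restrictions $n\geq 3$ (resp. $n\geq 2$) guarantee enough coordinates are available to make the tree and the ideal work — this is precisely where the dimension restrictions enter.

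For the second condition, once $\theta_1$ is fixed (say $\theta_1$ involves a derivative $\partial/\partial z_{i_0}$ so that $d_{x_0}z_{i_0}(\theta_1)$ or $d_{x_0}(\text{some coordinate})(\theta_1)\neq 0$), I would choose $x_0$ to be a generic smooth point of $\mathcal{G}$ at which all the $V_i$ are in general position, and for each $V_i$ pick $f_i$ to be an affine-linear combination of coordinates lying in $\ker V_i$ normalized so that $f_i(x_0)=0$; since $V_i\in\Gamma(\mathcal{V})$ and the pairing $d_{x_0}f_i(\theta_1)$ is a polynomial in $x_0$ that is not identically zero (one checks this on the explicit fields), a generic choice of $x_0$ avoiding the finitely many bad hypersurfaces makes $d_{x_0}f_i(\theta_1)\neq 0$ for all $i$ simultaneously, and also keeps $x_0$ off the zero locus of the ideal $J$ from the compatible tuple. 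Corollary \ref{cor:theorem2} then yields the algebraic density property; since algebraic density property implies density property, the ``in particular'' clauses follow.

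The main obstacle is the construction of the compatible $m$-tuple in the symplectic case: unlike $\mathcal{G}_{sl}$, which is cut out by a single equation and behaves much like a higher-dimensional Danielewski surface, $\mathcal{G}_{sp}$ is the common zero set of $n$ quadratic-type equations, so one must carefully organize the coordinates into a rooted tree so that at every edge a single coordinate simultaneously has degree one with respect to the child field and lies in the kernel of the parent field, and so that the remaining coordinates still furnish a non-zero product in $\prod_i\ker\theta_i$. Checking that $n\geq 2$ (and $n\geq 3$ in the special linear case) is exactly the threshold at which this combinatorial/algebraic bookkeeping closes up — and keeping track of which of the many defining equations forces which coordinate into which kernel is the technically delicate part, essentially a bookkeeping reprise of Proposition \ref{prop:spanningVectorFields}(ii). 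Everything else (completeness, algebraicity of flows, genericity of $x_0$) is routine given the results already proved.
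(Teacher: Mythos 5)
Your overall scaffolding coincides with the paper's: combine a compatible tuple with the spanning complete fields of Proposition \ref{prop:spanningVectorFields} and functions $f_i\in\ker V_i$ vanishing at a point where $d_xf_i(\theta_1)\neq 0$, then apply the criterion. But the actual mathematical content of the statement --- the existence of a compatible tuple on $\mathcal{G}_{sl}$ and on $\mathcal{G}_{sp}$ --- is precisely what your proposal does not establish. You assert the admissible rooted tree can be assembled ``exactly as in the Danielewski computation'' and that the bounds $n\geq 3$, $n\geq 2$ are ``precisely where the dimension restrictions enter'', but you never exhibit the fields $\theta_j$, the edge functions, or the verification of the degree-one condition. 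In the paper this is the substance of Theorems \ref{theorem:SLcompatible} and \ref{theorem:SPcompatible}: in the special linear case one takes the very specific pair $V_1=\frac{\partial P^K_i}{\partial z_{1,n2}}\Deriv{z}{1,n1}-\frac{\partial P^K_i}{\partial z_{1,n1}}\Deriv{z}{1,n2}$ and its analogue $V_2$ in the $Z_2$-variables, and the nontrivial step is the second-derivative computation $\Deriv{z}{1,nj}\frac{\partial P^K_i}{\partial z_{1,n2}}=0$ showing $z_{1,n2}\in\ker V_1^2\setminus\ker V_1$; in the symplectic case one needs the explicit quadratic fields $\gamma$ (for $n\geq 3$), $\tilde\gamma$ (for $n=2$, $K\geq 3$), and for $n=2$, $K=2$ a genuinely different compatible $3$-tuple $(V_1,V_2,V_3)$ on the five-dimensional variety, where no compatible pair is known. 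None of this is supplied, so the central existence claim remains unproven; it is not a ``bookkeeping reprise'' of results already available.

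Several auxiliary claims are also incorrect or inapplicable. The fields $D_y(p)$ of the form (\ref{beautyfulVectorField}) are only shown to be tangent to $\mathcal{G}$; they are in general neither locally nilpotent nor complete (already on a Danielewski surface the determinantal field $x\deriv{x}-y\deriv{y}$ is semisimple, and a generic $D_y(p)$ need not be complete at all), so your justification of completeness and of algebraic flows fails. Moreover, the spanning complete fields of Proposition \ref{prop:spanningVectorFields} lie in $\Gamma(\mathcal{V})$, i.e.\ they are pullbacks by \emph{holomorphic} automorphisms, hence need not be algebraic with algebraic flows; your route through Corollary \ref{cor:theorem2} is therefore not available, and the paper for exactly this reason argues in the holomorphic category via Theorem \ref{theorem:SufficientConditionsDP} together with Lemma \ref{lemma:usefulProperty} (whose hypothesis $\theta_1\notin\mathrm{span}((V_i)_x)$ you would still need to arrange at your generic point). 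Finally, the ``translation fields $\partial/\partial z_j$ coming from the product structure $P^{-1}(y)\cong\mathcal{G}\times\C^L$'' are vector fields on the product, not on $\mathcal{G}$; the whole point of the theorem is to handle $\mathcal{G}$ itself, where those fields do not exist, so they cannot enter the compatible tuple.
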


\begin{proof}
    In the following subsections we will show the existence of a compatible $m$-tuple for each case (Theorem \ref{theorem:SLcompatible} for the special linear and Theorem \ref{theorem:SPcompatible} for the symplectic case). Then the claims follow immediately from Theorem \ref{theorem:SufficientConditionsDP}, Proposition \ref{prop:spanningVectorFields} and Lemma \ref{lemma:usefulProperty} below.
\end{proof}

\begin{lemma}\label{lemma:usefulProperty}
    Let $x\in \mathcal{G}$ and $V\in \Gamma(\mathcal{V})$ with $V_x\neq 0$. Given any tangent vector $${W \in T_x \mathcal{G} \setminus \mathrm{span}(V_x)},$$ there is a holomorphic function $f\in \ker V$ with $f(x)=0$ and $d_x f(W)\neq 0$.
\end{lemma}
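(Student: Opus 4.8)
The plan is to produce the desired function $f$ by taking a suitable first integral of the vector field $V$ and then correcting it by a constant so that it vanishes at $x$. Recall that $V = \alpha^* D_y(p)$ for some $\alpha \in \mathrm{Aut}_{hol}(\mathcal{G})$ and some $(l+1)$-tuple $y = (z_{i_0}, \dots, z_{i_l})$; since pulling back by $\alpha$ is harmless (it merely pulls back the candidate $f$ as well), I would first reduce to the case $V = D_y(p)$ itself, replacing $x$ by $\alpha(x)$ and $W$ by $d_x\alpha(W)$. So it suffices to find, for a point $x \in \mathcal{G}$ with $D_y(p)_x \neq 0$ and a tangent vector $W \notin \mathrm{span}(D_y(p)_x)$, a regular (hence holomorphic) function $f$ with $D_y(p)(f) = 0$, $f(x) = 0$, and $d_x f(W) \neq 0$.

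The key observation is that the coordinate functions $z_j$ for $j \notin \{i_0, \dots, i_l\}$ all lie in $\ker D_y(p)$: indeed $D_y(p)$ is a combination of the partial derivatives $\partial/\partial z_{i_0}, \dots, \partial/\partial z_{i_l}$ only, so it annihilates every coordinate not among the chosen ones. More generally, for the variables $z_{i_0}, \dots, z_{i_l}$ themselves, expanding the determinant (\ref{beautyfulVectorField}) along the first row shows that $D_y(p)(z_{i_k})$ equals (up to sign) the $l\times l$ minor of the Jacobian of $p$ obtained by deleting the $i_k$-th column; one checks (again by the cofactor expansion, now of an $(l+1)\times(l+1)$ determinant with a repeated row) that $\sum_k \pm D_y(p)(z_{i_k})\, \partial p_r/\partial z_{i_k} = 0$ for each $r$, so on $\mathcal{G}$ the vector $(D_y(p)(z_{i_0}), \dots, D_y(p)(z_{i_l}))$ is, at each point, proportional to $D_y(p)$ read in the $y$-coordinates. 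In particular $D_y(p)_x \neq 0$ forces at least one $D_y(p)(z_{i_k})(x) \neq 0$. I would use this to build enough elements of $\ker D_y(p)$ near $x$: the functions $z_j$ ($j \notin \{i_0,\dots,i_l\}$) together with the $l$ functions $p_r$ (which vanish identically on $\mathcal{G}$ but whose differentials are available) — more usefully, one passes to local first integrals. The cleanest route is: on a neighborhood of $x$ the level sets of $D_y(p)$ foliate $\mathcal{G}$ by curves, and $\ker_x$ of the linear functional $"f \mapsto d_x f"$ restricted to $\ker D_y(p)$ is exactly the hyperplane $\mathrm{span}(D_y(p)_x)^{\perp}$ — because locally $\mathcal{G}$ admits a submersion onto a transversal whose fibers are the $V$-orbits, and the pullbacks of coordinates on the transversal span all of $\{d_x f : f \in \ker V\}$, which is a codimension-one subspace of $T_x^*\mathcal{G}$ annihilating precisely $D_y(p)_x$. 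Since $W \notin \mathrm{span}(D_y(p)_x)$, there is $f \in \ker V$ with $d_x f(W) \neq 0$; subtracting $f(x)$ (which is still in $\ker V$) arranges $f(x) = 0$.

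To make this rigorous without invoking a local flow-box I would instead argue algebraically/Steinly: the set $\{d_x f : f \in \ringofholo,\ V(f) = 0\}$ is a linear subspace $\Lambda$ of $T_x^*\mathcal{G}$ contained in the annihilator $(V_x)^{\perp}$, which is a hyperplane since $V_x \neq 0$. It remains to show $\Lambda$ is \emph{all} of $(V_x)^{\perp}$, i.e. has dimension $\dim \mathcal{G} - 1$. For this, note that near $x$ we may extend $V$ to a holomorphic frame $V, E_2, \dots, E_{d}$ of $T\mathcal{G}$ ($d = \dim\mathcal{G}$); the kernel of $V$ (as a derivation on the local ring) is a subring whose maximal-ideal-modulo-its-square surjects onto $(V_x)^{\perp}$ by the standard fact that a nonvanishing holomorphic vector field is locally straightenable (Cauchy--Kovalevskaya / holomorphic flow box), so locally $\ker V$ = functions independent of the first straightened coordinate, and their differentials at $x$ fill the hyperplane $(V_x)^{\perp}$. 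Finally, a Cartan Theorem B argument (exactly as in the proof of Theorem~\ref{theorem:holFlexDP}) promotes a local first integral with the prescribed $1$-jet at $x$ to a global holomorphic $f \in \ker V$ on all of $\mathcal{G}$: one solves the $\bar\partial$-type / extension problem for the overdetermined system $V(f) = 0$ with the jet constraint, using that $\mathcal{G}$ is Stein and $V$ is complete so that $\ker V$ is large (it separates points and jets away from the zero set of $V$, by the compatible-tuple machinery already in play). Then $f - f(x)$ is the required function.

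The main obstacle is the last point: upgrading a \emph{local} first integral to a \emph{global} holomorphic function in $\ker V$ with a prescribed value and differential at $x$. The purely local straightening gives the right $1$-jet but no control at infinity; one needs that $V$ is $\C$-complete (so its flow is globally defined) and that $\mathcal{G}$ is Stein, in order to average/extend. In the algebraic context underlying Theorem~\ref{prop:existenceCompatibleTuple} this is cleaner: for $V = D_y(p)$ the functions $z_j$ with $j \notin \{i_0,\dots,i_l\}$ are \emph{global} elements of $\ker V$, and combined with the identity $\sum_k \pm D_y(p)(z_{i_k})\,\partial p_r/\partial z_{i_k} = 0$ one gets global rational — in fact, after clearing the common factor, polynomial — first integrals whose differentials already span $(V_x)^{\perp}$ at every $x$ where $V_x\neq 0$; pulling back by $\alpha$ and subtracting the value at $x$ finishes. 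I expect the written proof to take this concrete algebraic route rather than the soft Stein one, since all the vector fields here are of the explicit form (\ref{beautyfulVectorField}) up to automorphisms, and to package the needed determinant identities as a short computation.
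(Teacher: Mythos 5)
Your final (``concrete algebraic'') route is in spirit the paper's: reduce to $V=D_y(p)$ by automorphism invariance, and take $f$ to be a coordinate function $z_j-x_j$ with $j\notin\{i_0,\dots,i_l\}$, these being global elements of $\ker V$. But the decisive step is left unproven: you assert that the differentials of these kernel functions ``already span $(V_x)^{\perp}$ at every $x$ where $V_x\neq 0$'', equivalently that for every $W\in T_x\mathcal{G}\setminus\mathrm{span}(V_x)$ some $d_xz_j(W)\neq 0$, and the justification you offer does not establish it. The identity $\sum_k \pm D_y(p)(z_{i_k})\,\partial p_r/\partial z_{i_k}=0$ is nothing but the tangency relation $V(p_r)=0$ (and your ``proportionality'' remark is vacuous, since the $D_y(p)(z_{i_k})$ \emph{are} the components of $V$); it produces no new first integrals, and the ``rational, after clearing the common factor polynomial, first integrals'' are never exhibited. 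Since this spanning/detection statement is exactly the content of the lemma, the argument as written has a genuine gap. (The earlier flow-box plus Cartan Theorem~B globalization you sketch is both unnecessary and itself delicate --- producing a \emph{global} first integral of a complete field with prescribed $1$-jet is not a routine Theorem~B application --- but you discard that route anyway, so the relevant issue is the one above.)

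The paper closes precisely this step with an auxiliary family of determinant fields: after normalizing $y=(z_1,\dots,z_{k+1})$ and $\alpha_{k+1}(x)\neq 0$ (some component of $V_x$ is nonzero), it considers $V_i:=D_{y_i}(p)$ for $y_i=(z_1,\dots,z_k,z_i)$, $i=k+2,\dots,m$, which have the form $V_i=\alpha_{k+1}\Deriv{z}{i}+\sum_{j\le k}\tilde\alpha_{ij}\Deriv{z}{j}$. Hence $V_x,(V_{k+2})_x,\dots,(V_m)_x$ form a basis of $T_x\mathcal{G}$, so writing $W=\lambda V_x+\sum_{i\ge k+2}\mu_i(V_i)_x$ gives some $\mu_j\neq 0$, and $f=z_j-x_j\in\ker V$ satisfies $f(x)=0$ and $d_xf(W)=\mu_j\alpha_{k+1}(x)\neq 0$. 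If you prefer your dual formulation, the missing argument can also be supplied directly: if $\sum_{j\notin\{i_0,\dots,i_l\}}c_j\,dz_j$ vanishes on $T_x\mathcal{G}$, then it equals $\sum_r\lambda_r\,d_xp_r$, and comparing the components in the tuple directions forces $\lambda=0$ because $V_x\neq 0$ means the $l\times(l+1)$ submatrix $\bigl(\partial p_r/\partial z_{i_k}(x)\bigr)$ has full rank $l$; a dimension count ($m-l-1$ independent covectors inside the $(m-l-1)$-dimensional hyperplane $(V_x)^{\perp}$) then yields the spanning. Either of these finishes your proof; without one of them it is incomplete.
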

\begin{proof}
    Note that it suffices to show the claim for vector fields in $\mathcal{V}$, since the conclusion of the lemma is invariant under holomorphic automorphisms. Let $V=D_y(p)$ be a vector field of the form (\ref{beautyfulVectorField}). Without loss of generality, we may assume that $y=(z_1,\dots,z_{k+1})$ is the corresponding $(k+1)$-tuple, that is, we have
     $$ V = \sum_{i=1}^{k+1} \alpha_i \Deriv{z}{i},$$
     where $\alpha_i$, $i=1,\dots,k+1,$ are regular functions given by
     $$ \alpha_i(z) = V(z_i) = \det \begin{pmatrix}
         \Deriv{z}{1}p_1(z) & \cdots &\Deriv{z}{i-1}p_1(z) & \Deriv{z}{i+1}p_1(z)& \cdots & \Deriv{z}{k+1}p_1(z)\\
         \vdots & &\vdots  & \vdots & & \vdots \\
         \Deriv{z}{1}p_k(z) & \cdots &\Deriv{z}{i-1}p_k(z) & \Deriv{z}{i+1}p_k(z)& \cdots & \Deriv{z}{k+1}p_k(z)
     \end{pmatrix}.$$
     By assumption, we have $V_x\neq 0$. We may therefore assume, again without loss of generality, that $\alpha_{k+1}(x)\neq 0.$ Let $W \in T_x X\setminus \mathrm{span}(V_x)$. 

     The vector fields $V_i:=D_{y_i}(p)$ corresponding to the $(k+1)$-tuples $y_i=(z_1,\dots,z_k,z_i)$, $i=k+2,\dots,m,$ are given by
$$V_i=\alpha_{k+1}\Deriv{z}{i}+ \sum_{j=1}^k \tilde{\alpha}_{ij}\Deriv{z}{j}$$
for some regular functions $\tilde{\alpha}_{ij}, i=k+2,\dots,m, j=1,\dots,k.$ Hence $V_x, (V_{k+2})_x,\dots,(V_m)_x$ form a basis of $T_xX$. Then
$$ W = \lambda V_x + \sum_{i=k+2}^m \mu_i (V_i)_x$$
with $\mu_j\neq 0$ for some $j\in \{k+2,\dots,m\}$. Let $\pi_j$ denote the projection to the $j$-th component. Then we set
$f(z):=\pi_j(z-x)=z_j-x_j$. Observe that $f(x)=0$ and $f\in \ker V$. It remains to show that $d_xf(W)\neq 0$. By construction, $d_xf$ is also the projection to component $j$, that is, $d_xf(W)=\mu_j\neq 0.$
\end{proof}

\subsection{The special linear case}
In this subsection we find two complete holomorphic vector fields $V_1$ and $V_2$ on $\mathcal{G}$ that together form a compatible $2$-tuple $(V_1,V_2)$. For this reason we look more closely at the polynomial mapping $P\colon\C^m\to \C^k.$ For an odd natural number $K$ we denote elements in $\C^{n(n-1)/2}$ as follows
$$  Z_K = (z_{K,21},\dots,z_{K,kl},\dots,z_{K,n(n-1)}),\quad k>l. $$
Then we set
$$ M_K(Z_K) = \begin{pmatrix}
    1 & 0 & \cdots & 0 \\
    z_{K,21} & 1 & \ddots & \vdots \\
    \vdots & \ddots & \ddots & 0 \\
    z_{K,n1} & \cdots & z_{K,n(n-1)} & 1
\end{pmatrix}.$$
For even $K$ we proceed similarly. We only consider the transposed version, that is, we write elements in $\C^{n(n-1)/2}$ as
$$ Z_K = (z_{K,12},\dots,z_{K,kl},\dots,z_{K,(n-1)n}),\quad k<l,$$
and we set
$$ M_K(Z_K) = \begin{pmatrix}
    1 & z_{K,12} & \cdots & z_{K,1n} \\
    0 & 1 & \ddots & \vdots \\
    \vdots & \ddots & \ddots & z_{K,(n-1)n}\\
    0 & \cdots & 0 & 1
\end{pmatrix}.$$
Set $m=\frac{n(n-1)}{2}$. For a fixed natural number $K$, the Gromov--Vaserstein fibration $${P^K=(P^K_1,\dots,P^K_n)\colon(\C^m)^K\to \C^n}$$ is given by
$$ P^K(Z_1,\dots,Z_K) = e_n^T M_1(Z_1)^{-1}\cdots M_K(Z_K)^{-1}.$$

We now present the definition of the variety $\mathcal{G}$.
\begin{defi}
    For $L\geq 2$, $1\leq i\leq n$ and $a\in \C^*$, we define
    $$ \mathcal{G}:=\mathcal{G}_{L,i,a}=\{Z\in (\C^m)^L: P_i^L(Z)=a\}.$$
\end{defi}

\begin{lemma}
    Let $K\geq 3$ and $a=(a_1,\dots,a_n)\in \C^n\setminus \{0\}$. Then we have
    $$ (P^K)^{-1}(a) \cong \mathcal{G}_{K-1,i,a_i}\times \C^l,$$
    for some $1\leq i\leq n,$ and some natural number $l$.
\end{lemma}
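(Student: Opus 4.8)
The plan is to reduce the fiber $(P^K)^{-1}(a)$ over a point $a=(a_1,\dots,a_n)\in\C^n\setminus\{0\}$ to a single defining equation by an explicit change of coordinates, peeling off one elementary factor at a time. Recall that $P^K(Z_1,\dots,Z_K) = e_n^T M_1(Z_1)^{-1}\cdots M_K(Z_K)^{-1}$, so a point lies in $(P^K)^{-1}(a)$ iff the last row of $M_1(Z_1)^{-1}\cdots M_K(Z_K)^{-1}$ equals $a$. First I would multiply on the right by $M_K(Z_K)$: the condition becomes $e_n^T M_1(Z_1)^{-1}\cdots M_{K-1}(Z_{K-1})^{-1} = a\cdot M_K(Z_K)$. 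Since $a\neq 0$, some coordinate $a_i$ is nonzero; the entries of the vector $a\cdot M_K(Z_K)$ depend affinely (indeed, linearly) on the coordinates $Z_K$ of the last factor, and because $M_K(Z_K)$ is unipotent triangular, the map $Z_K \mapsto a\cdot M_K(Z_K)$ is a polynomial isomorphism onto an affine subspace of $\C^n$ once we fix the $i$-th coordinate. Concretely, $Z_K$ is determined by the remaining $n-1$ coordinates of $a\cdot M_K(Z_K)$, which can be chosen freely, and the $i$-th coordinate of $a\cdot M_K(Z_K)$ then reads off as $P^{K-1}_i(Z_1,\dots,Z_{K-1}) = a_i$ — this is exactly the defining equation of $\mathcal{G}_{K-1,i,a_i}$.

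The key steps, in order, are: (1) rewrite membership in $(P^K)^{-1}(a)$ as $e_n^T M_1^{-1}\cdots M_{K-1}^{-1} = a\cdot M_K(Z_K)$; (2) observe that since $a\neq 0$ and $M_K(Z_K)$ is unipotent triangular, the assignment $Z_K \mapsto a\cdot M_K(Z_K)$, for an appropriate choice of index $i$ with $a_i\neq 0$, is a polynomial isomorphism from $(\C^m)$ onto $\{v\in\C^n : v_i = e_i\text{-component constraint}\}\times(\text{the free part})$, in a way that lets us solve for all of $Z_K$ in terms of $n-1$ of the target coordinates; (3) conclude that the graph of this solution identifies $(P^K)^{-1}(a)$ with the subset of $(\C^m)^{K-1}\times\C^{\,l'}$ (where $\C^{\,l'}$ records the $n-1$ free coordinates) cut out by $P^{K-1}_i(Z_1,\dots,Z_{K-1}) = a_i$, i.e. with $\mathcal{G}_{K-1,i,a_i}\times\C^{\,l'}$; (4) set $l:=l'$ (bookkeeping of the exact value). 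The hypothesis $K\geq 3$ guarantees $K-1\geq 2$, which is the standing requirement $L\geq 2$ in the definition of $\mathcal{G}_{L,i,a}$.

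The main obstacle is step (2): one must verify carefully that peeling off $M_K$ really does split off a Euclidean factor cleanly, i.e. that the $n-1$ "non-$i$" entries of $a\cdot M_K(Z_K)$ can be taken as independent free coordinates while simultaneously determining $Z_K$ uniquely and polynomially. This is where the unipotent triangular structure of $M_K$ and the non-vanishing of $a_i$ are both essential: upper/lower triangularity ensures the system for $Z_K$ is solvable by back-substitution, and $a_i\neq 0$ ensures the $i$-th equation genuinely constrains $(Z_1,\dots,Z_{K-1})$ rather than being trivially satisfied or over-determining $Z_K$. Once this is set up the rest is routine, and in fact this is precisely the mechanism already used in the cited lemmas of \cite{SLnPaper}; our statement is the $K$-step analogue phrased for the varieties $\mathcal{G}_{L,i,a}$, so I would if needed simply invoke \cite{SLnPaper}*{Lemma 3.7} (or its proof) rather than reprove the splitting from scratch.
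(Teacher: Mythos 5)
Your overall route (peel off the last factor $M_K$, reduce to a single equation $P^{K-1}_i=a_i$, and fall back on \cite{SLnPaper}*{Lemma 3.7} if needed) is the same as the paper's, which in fact only illustrates the mechanism in the case $K=n=3$, $a_3\neq 0$ and cites \cite{SLnPaper}*{Lemma 3.7} for the full proof; so the final sentence of your proposal is a legitimate way out. However, the explicit argument you sketch in step (2) is not correct as stated, in two concrete ways. First, the map $Z_K\mapsto a\,M_K(Z_K)$ cannot be ``a polynomial isomorphism onto an affine subspace'' determining all of $Z_K$: its source has dimension $m=n(n-1)/2$ while at most $n-1$ coordinates of the target vary, so for $n\geq 3$ only the entries of $Z_K$ that are multiplied by the chosen nonzero $a_i$ (at most $n-1$ of them, e.g.\ $z_{K,i1},\dots,z_{K,i(i-1)}$ when $K$ is odd) get determined; the remaining entries of $Z_K$ are genuinely free, and \emph{they} are what produces the factor $\C^l$ (in the paper's example $z_{3,21}$ is free and $l=1$). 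Your bookkeeping ``$\C^{l'}$ records the $n-1$ free coordinates'' of the target is the wrong count: the non-$i$ components of $a\,M_K(Z_K)$ are not free parameters, they are forced to equal $P^{K-1}_j(Z_1,\dots,Z_{K-1})$.

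Second, and more seriously, ``some coordinate $a_i\neq 0$'' does not suffice for a one-step reduction. Take $K$ odd and $a=(1,0,\dots,0)$: then $(a\,M_K(Z_K))_l=a_l+\sum_{k>l}a_k z_{K,kl}=a_l$, so $a\,M_K(Z_K)\equiv a$ is \emph{constant}, no entry of $Z_K$ is determined, and the peeled condition is the full system $P^{K-1}(Z_1,\dots,Z_{K-1})=a$ ($n$ equations), not the single equation $P^{K-1}_1=a_1$. In general, for $K$ odd the clean one-step splitting works only when $a_n\neq 0$ (and for $K$ even only when $a_1\neq 0$); if the extremal coordinate vanishes, the peeling leaves extra equations $P^{K-1}_l=0$ involving none of the $Z_K$-variables, and one must iterate the reduction --- this is exactly what the full proof in \cite{SLnPaper}*{Lemma 3.7} handles, and why the paper defers to it rather than arguing in one step. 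So either restrict your explicit argument to the extremal case and quote the cited lemma for general $a$, or carry out the iterated reduction; as written, step (2) fails.
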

We illustrate the idea of the proof with a simple example. A full proof can be found in the proof of \cite{SLnPaper}*{Lemma 3.7}. We consider the case $K=n=3$ and we assume that $a_3\neq 0.$ We have
$$ a = P^3(Z_1,Z_2,Z_3) = P^2(Z_1,Z_2)M_3(Z_3)^{-1}$$
if and only if
\begin{align*}
    P^2(Z_1,Z_2) &= a M_3(Z_3) = \begin{pmatrix}
    a_1 & a_2 & a_3
\end{pmatrix}\begin{pmatrix}
    1 & 0 & 0 \\ z_{3,21} & 1 & 0 \\ z_{3,31} & z_{3,32} & 1
\end{pmatrix} \\ &= \begin{pmatrix}
    a_1 + a_2z_{3,21} + a_3z_{3,31} & a_2 + a_3z_{3,32} & a_3\end{pmatrix}.
\end{align*} 
We can now express the variables $z_{3,31}$ and $z_{3,32}$ in terms of the others, that is,
\begin{align*}
    z_{3,32} &= \frac{1}{a_3}\left(P_2^2(Z_1,Z_2) - a_2\right)\\
    z_{3,31} &= \frac{1}{a_3}\left(P^2_1(Z_1,Z_2)-a_1 - a_2z_{3,21}\right).
\end{align*}
Observe that there are no conditions for $z_{3,21}$, therefore, the fiber $(P^3)^{-1}(a)$ is biholomorphic to
$\{Z\in (\C^3)^2: P^2_3(Z)=a_3\}\times \C$.

\begin{prop}
Let $L\geq 2$ and $a\in \C^*$. Then
\begin{enumerate}[label=(\arabic*)]
    \item $\mathcal{G}_{L,i,a}$ is smooth for $1\leq i<n$,
    \item $\mathcal{G}_{L,n,a}$ is smooth for $a\neq 1.$
\end{enumerate}
\end{prop}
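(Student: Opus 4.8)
The plan is to verify smoothness by examining the Jacobian of the single defining equation $P_i^L(Z) - a = 0$ on $(\C^m)^L$ and showing it has no zeros on the fiber $\mathcal{G}_{L,i,a}$. The key observation is that $P^L(Z) = e_n^T M_1(Z_1)^{-1}\cdots M_L(Z_L)^{-1}$ is a row vector whose entries are polynomials, and for a single component $P_i^L$ we must show that $d_Z P_i^L \neq 0$ whenever $P_i^L(Z) = a$. First I would reduce to understanding the partial derivatives of $P_i^L$ with respect to the entries of the last factor $Z_L$ (or, by the symmetry of the construction, one could also differentiate in $Z_1$). Writing $Q = e_n^T M_1^{-1}\cdots M_{L-1}^{-1}$ and $P^L = Q M_L(Z_L)^{-1}$, the entries of $Q$ are polynomials in $Z_1,\dots,Z_{L-1}$ and one computes $\partial P_i^L / \partial z_{L,kl}$ in terms of the entries of $Q$ and of $M_L^{-1}$.

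Second, I would distinguish the two cases according to whether we are in the lower- or upper-triangular situation for $M_L$, i.e.\ whether $L$ is odd or even, and track precisely which entry $z_{L,kl}$ the index $i$ interacts with. The point is that $P_i^L$ depends linearly (affinely) on the ``free'' variables in the appropriate column or row of $M_L$, with coefficients that are themselves entries of the vector $Q$ — and $Q$ is an invertible-matrix expression applied to $e_n^T$, so $Q$ is never the zero vector. Thus the only way for all these partial derivatives to vanish simultaneously is for a specific collection of entries of $Q M_L^{-1}$, i.e.\ of $P^L$ itself, to vanish. One then shows that this forces $P_i^L = 0$ in case (1) (where $1 \le i < n$, so $P_i^L$ is one of the entries that would have to vanish), contradicting $P_i^L(Z) = a \neq 0$; and in case (2), where $i = n$, the relevant entry $P_n^L$ is structurally forced to equal $1$ (being the last entry of $e_n^T$ times a product of unipotent matrices of the relevant triangular shape), so the vanishing condition becomes $P_n^L = 0$, which contradicts $P_n^L(Z) = a \neq 1$. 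This is exactly why the hypothesis $a \neq 1$ is needed only in case (2): when $i < n$ any nonzero value of $a$ already excludes the singular locus, but when $i = n$ the function $P_n^L$ takes the constant-like value $1$ along a degenerate stratum, so one must additionally stay away from $a = 1$.

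The main obstacle I expect is the bookkeeping in the second step: correctly identifying, for each parity of $L$ and each index $i$, which entries of $P^L$ appear as the coefficients of the free variables in $\partial P_i^L/\partial z_{L,kl}$, and confirming that among these entries the one that ``must'' vanish is precisely $P_i^L$ in case (1) and that $P_n^L \equiv 1$ on the problematic locus in case (2). A clean way to organize this is to use the column-wise (resp. row-wise) action: $M_L(Z_L)^{-1}$ differs from the identity only in one triangular block, and right-multiplication by it replaces $P^L$'s entries by affine combinations whose linear parts are governed by the strictly-triangular part; differentiating in $z_{L,kl}$ then isolates a single entry of $Q$ (times $\pm 1$), and requiring all of them to vanish pins down a sub-row of $Q$, hence of $P^L$. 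Once this linear-algebra dictionary is set up, the smoothness claim and the precise case split fall out, and I would close by invoking the earlier lemmas identifying $\mathcal{G}_{L,i,a}$ as the relevant fiber to transfer the conclusion to the statement as phrased.
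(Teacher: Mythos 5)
Your overall strategy---show that the gradient of the single defining function $P_i^L - a$ has no zeros on the fiber---is a viable and genuinely different route from the paper, which instead uses the isomorphism $\mathcal{G}_{L,i,a}\times\C^l\cong (P^{L+1})^{-1}(b)$ and quotes the classification of singular fibers of $P^{L+1}$ from \cite{SLnPaper}*{Remark 4.1}. However, your plan has a genuine gap at its first step: restricting to the partial derivatives with respect to the last block $Z_L$ (or, alternatively, the first block $Z_1$) is not sufficient. By the derivative formula $\partial P^L_i/\partial z_{L,kl} = -P^L_k\, e_l^T M_L(Z_L)^{-1} e_i$, the simultaneous vanishing of all $Z_L$-derivatives only forces $P^L_k=0$ for $k<i$ (when $L$ is even; for $k>i$ when $L$ is odd), and never forces $P^L_i=0$, since the pairs $(k,l)=(i,l)$ contribute nothing (the relevant entry of the triangular matrix $M_L^{-1}$ is $0$). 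Concretely, for $n=3$, $L=2$, $i=2$ one has $P_2^2=-u\,z_{2,12}-z_{1,32}$ with $u=z_{1,21}z_{1,32}-z_{1,31}=P_1^2$, so all $Z_2$-derivatives vanish exactly on $\{P_1^2=0\}$, a locus meeting every fiber $\{P_2^2=a\}$ with $a\neq 0$; no contradiction can be extracted from the last block alone (these points are smooth points of the fiber only because the $Z_1$-derivatives survive there). A first-block-only reduction fails similarly as soon as $L$ is large enough that $M_1^{-1}\cdots M_L^{-1}$ can have its $i$-th column proportional to $e_n$. So your key claimed implication ``all these partials vanish $\Rightarrow P_i^L=0$'' is false as stated.

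The second gap is in case (2): $P_n^L=e_n^TM_1(Z_1)^{-1}\cdots M_L(Z_L)^{-1}e_n$ is not ``structurally equal to $1$'' (already $P_2^2=z_{1,21}z_{2,12}+1$ for $n=2$), and the sentence ``the vanishing condition becomes $P_n^L=0$, which contradicts $a\neq 1$'' is internally inconsistent---if the condition were $P_n^L=0$, the hypothesis $a\neq 0$ would already suffice and (2) would be false. What a correct direct proof must show is that every critical point of $P_i^L$ forces the value: combining the derivatives of \emph{all} blocks $Z_1,\dots,Z_L$ in an induction (from the $Z_1$-derivatives and $P^1_n=1$ one gets that the $i$-th column of $M_1^{-1}\cdots M_L^{-1}$ equals $c\,e_n$ with $c=P_i^L\neq 0$; pushing this through the blocks, using the even-indexed ones to kill $P^J_k$ for $k<n$, one ends with the $i$-th column of $M_L^{-1}$ being $c\,e_n$, which is impossible for $i<n$ and forces $c=1$ for $i=n$). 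This multi-block bookkeeping---or, as the paper does, the reduction to the known singular-fiber classification---is the missing ingredient; with the derivatives of a single block the argument cannot close.
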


\begin{proof}
    Let $L\geq 2$ and $a\in \C^*$. Then we have
    $$ \mathcal{G}_{L,i,a} \times \C^l \cong (P^{L+1})^{-1}(b)=:\mathcal{F},$$
    for some natural number $l$ and some $b=(b_1,\dots,b_n)\in \C^n.$ In \cite{SLnPaper}*{Remark 4.1} we have a classification of the singular fibers. We distinguish two cases.
    
    For $L$ odd (and obviously $L+1$ even), we have $b_1=\dots=b_{i-1}=0$ and $b_i=a\neq 0.$ Moreover, the fiber $\mathcal{F}$ is regular if and only if $b\neq e_n^T.$ And this proves the claim for odd numbers $L$.

    For $L$ even, we have $b_n=\dots=b_{i+1}=0$ and $b_i=a\neq 0$. Moreover, the fiber $\mathcal{F}$ is regular if and only if $b_n\neq 1.$ And this proves the claim for even numbers $L.$
\end{proof}

\begin{lemma}
    For $L\leq K$, we have
    $$ \Deriv{z}{L,kl} P^K_i = -P_k^L\cdot e_l^TM_L(Z_L)^{-1}\cdots M_K(Z_K)^{-1}e_i$$
\end{lemma}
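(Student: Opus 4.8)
The plan is to compute the partial derivative $\Deriv{z}{L,kl} P^K_i$ directly from the definition
$$ P^K(Z_1,\dots,Z_K) = e_n^T M_1(Z_1)^{-1}\cdots M_K(Z_K)^{-1},$$
by applying the product rule and the formula for the derivative of the inverse of a matrix that depends on a parameter.

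First I would recall that for a smoothly varying invertible matrix $A(t)$ one has $\partial_t (A^{-1}) = -A^{-1}(\partial_t A)A^{-1}$. Since the factor $M_L(Z_L)^{-1}$ is the only one in the product $M_1(Z_1)^{-1}\cdots M_K(Z_K)^{-1}$ depending on the variable $z_{L,kl}$, differentiating the whole product in $z_{L,kl}$ yields
$$ \Deriv{z}{L,kl} P^K_i = -e_n^T M_1(Z_1)^{-1}\cdots M_{L-1}(Z_{L-1})^{-1}M_L(Z_L)^{-1}\Bigl(\Deriv{z}{L,kl}M_L(Z_L)\Bigr) M_L(Z_L)^{-1}\cdots M_K(Z_K)^{-1} e_i,$$
where $e_i$ picks out the $i$-th column, i.e.\ the $i$-th entry of the row vector. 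Next I would observe that $\Deriv{z}{L,kl}M_L(Z_L) = e_k e_l^T$: by the explicit form of $M_L(Z_L)$ (either the lower- or upper-triangular unipotent matrix, depending on the parity of $L$), the entry in position $(k,l)$ is exactly the variable $z_{L,kl}$ and all other entries are constants, so its derivative is the elementary matrix with a single $1$ in position $(k,l)$. Substituting and using $e_n^T M_1(Z_1)^{-1}\cdots M_{L-1}(Z_{L-1})^{-1}M_L(Z_L)^{-1} = P^L(Z_1,\dots,Z_L)$, hence $e_n^T M_1^{-1}\cdots M_L^{-1} e_k = P^L_k$, the expression factors as
$$ \Deriv{z}{L,kl} P^K_i = -\bigl(P^L_k\bigr)\cdot e_l^T M_L(Z_L)^{-1}\cdots M_K(Z_K)^{-1} e_i,$$
which is the claimed identity.

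There is essentially no serious obstacle here; the only point requiring a little care is the bookkeeping of indices and the convention that $P^K$ is a \emph{row} vector, so that $P^K_i$ is extracted by right-multiplication with $e_i$, and the telescoping of $e_n^T M_1^{-1}\cdots M_L^{-1}$ into $P^L$. One should also note that the formula as stated implicitly assumes $L \le K$ so that $M_L(Z_L)^{-1}$ genuinely appears among the factors to the right of $M_{L-1}(Z_{L-1})^{-1}$; the edge case $L = K$ works verbatim with the product $M_L(Z_L)^{-1}\cdots M_K(Z_K)^{-1}$ reducing to the single factor $M_K(Z_K)^{-1}$. I expect the whole verification to be a few lines once the derivative-of-inverse identity is invoked.
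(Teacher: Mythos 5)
Your proposal is correct and follows essentially the same route as the paper: the paper also obtains $\Deriv{z}{L,kl} M_L(Z_L)^{-1} = -M_L(Z_L)^{-1}E_{kl}M_L(Z_L)^{-1}$ (deriving the derivative-of-inverse identity via the product rule), notes $\Deriv{z}{L,kl}M_L(Z_L)=E_{kl}=e_ke_l^T$, and then telescopes $e_n^TM_1(Z_1)^{-1}\cdots M_L(Z_L)^{-1}e_k=P^L_k$ exactly as you do.
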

\begin{proof}
    The product rule implies
    \begin{align*}
    0 &= \Deriv{z}{L,kl} (M_L(Z_L)^{-1}M_L(Z_L)) \\ &= \Deriv{z}{L,kl} M_L(Z_L)^{-1}M_L(Z_L) + M_L(Z_L)^{-1}\underbrace{\Deriv{z}{L,kl} M_L(Z_L)}_{=E_{kl}}, \end{align*}
    where $E_{kl}$ denotes the $n\times n$-matrix having a one at entry $(k,l)$ and zeros elsewhere. Hence
    $$ \Deriv{z}{L,kl} M_L(Z_L)^{-1} = - M_L(Z_L)^{-1}E_{kl}M_L(Z_L)^{-1}.$$
    Another application of the product rule implies
    \begin{align*}
        \Deriv{z}{L,kl} P^K_i &= e_n^T M_1(Z_1)^{-1}\cdots \Deriv{z}{L,kl}M_L(Z_L)^{-1} \cdots M_K(Z_K)^{-1}e_i\\
        &= -\underbrace{e_n^TM_1(Z_1)^{-1}\cdots M_L(Z_L)^{-1}}_{=P^L}\underbrace{E_{kl}M_L(Z_L)^{-1}\cdots M_K(Z_K)^{-1}e_i}_{=e_l^TM_L(Z_L)^{-1}\cdots M_K(Z_K)^{-1}e_i\cdot e_k}\\
        &=- P^L_k \cdot e_l^T M_L(Z_L)^{-1}\cdots M_K(Z_K)^{-1}e_i
    \end{align*}
    and this finishes the proof.
\end{proof}

\begin{lemma}
    For $L<K$ we have $\Deriv{z}{L,kl}P_i^K\not\equiv 0$ on $\mathcal{G}.$ Moreover, we have $\Deriv{z}{K,kl}P_i^K\not\equiv 0$ on $\mathcal{G}$ if and only if ($K$ is odd and $l\geq i$) or ($K$ is even and $l\leq i$).
\end{lemma}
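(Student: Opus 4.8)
The statement to be proved compares two cases. The first claim — that $\partial/\partial z_{L,kl}\, P_i^K\not\equiv 0$ on $\mathcal{G}$ for $L<K$ — should follow almost immediately from the explicit formula in the preceding lemma, namely
$$\Deriv{z}{L,kl} P^K_i = -P_k^L\cdot e_l^TM_L(Z_L)^{-1}\cdots M_K(Z_K)^{-1}e_i.$$
The plan is to argue that neither factor vanishes identically: the polynomial $P_k^L$ is visibly not the zero polynomial on all of $(\C^m)^L$ (it is an entry of a product of unipotent matrices and their inverses), and since $\mathcal{G}$ is defined by a single equation $P^L_{i_0}=a$ that does not force $P^L_k$ to vanish, the restriction $P_k^L|_{\mathcal{G}}$ is still not identically zero; similarly the vector $e_l^TM_L(Z_L)^{-1}\cdots M_K(Z_K)^{-1}e_i$ is a polynomial that is $1$ at $Z_L=\dots=Z_K=0$ when $l=i$ and, for $l\neq i$, is a nonzero polynomial because the matrices $M_j$ are generic unipotent. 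The product of two polynomials on an irreducible variety that are each not identically zero is again not identically zero, which gives $L<K$ case. I would keep this short.

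The second, more delicate claim concerns $L=K$, i.e. $\partial/\partial z_{K,kl}\, P_i^K$. Here the same lemma gives
$$\Deriv{z}{K,kl} P^K_i = -P_k^K\cdot e_l^TM_K(Z_K)^{-1}e_i,$$
and now the issue is that both factors can genuinely vanish identically depending on the parity of $K$ and the relative positions of $l$ and $i$. The plan is to analyze the scalar $e_l^TM_K(Z_K)^{-1}e_i$, which is the $(l,i)$-entry of $M_K(Z_K)^{-1}$. For $K$ odd, $M_K$ is lower-triangular unipotent, hence so is $M_K^{-1}$, so the $(l,i)$-entry vanishes identically precisely when $l<i$ and is a nonzero polynomial (in fact $1$ when $l=i$, and a genuine polynomial in the $z_{K,\bullet}$ when $l>i$) exactly when $l\geq i$; the same reasoning transposed gives that for $K$ even the $(l,i)$-entry of $M_K^{-1}$ is nonzero iff $l\leq i$. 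Then I need to check that $P_k^K$ is not identically zero on $\mathcal{G}$ — which, as in the first case, holds because the defining equation of $\mathcal{G}$ does not kill the relevant entry of the matrix product — so the parity/position condition on $l$ versus $i$ is both necessary and sufficient for $\Deriv{z}{K,kl}P_i^K\not\equiv 0$.

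The main obstacle, and the one place I would be careful, is verifying that $e_l^TM_K(Z_K)^{-1}e_i$ is \emph{not} identically zero in the "good" range of $(l,i)$ and \emph{is} identically zero otherwise, because this requires knowing the precise structure of $M_K(Z_K)^{-1}$. The clean way is to note that $M_K(Z_K)$ is unipotent lower-triangular (for $K$ odd), so $M_K(Z_K)^{-1} = I - N + N^2 - \cdots$ where $N$ is strictly lower-triangular; hence $M_K(Z_K)^{-1}$ is again unipotent lower-triangular, its $(l,l)$-entry is $1$, its entries strictly above the diagonal vanish, and its entries strictly below the diagonal are nonzero polynomials in the $z_{K,\bullet}$ (e.g. the $(l,i)$-entry for $l=i+1$ equals $-z_{K,(i+1)i}$, which is manifestly not the zero polynomial). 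The even case is the transpose of this. Combining this with the non-vanishing of $P^K_k|_{\mathcal{G}}$ finishes the proof. I should also remark that the condition is stated for the restriction to $\mathcal{G}$, so I must make sure that the single defining equation of $\mathcal{G}$ cannot conspire to annihilate a product of polynomials neither of which it annihilates — this is immediate from irreducibility (or from the product structure $\mathcal{G}\times\C^l\cong$ a fiber, so $\mathbb{C}[\mathcal{G}]$ is a domain).
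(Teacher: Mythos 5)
Your proposal is correct and follows essentially the same route as the paper: apply the derivative formula from the preceding lemma, observe that both factors are nonvanishing at a generic point of $\mathcal{G}$ for $L<K$, and for $L=K$ read off the vanishing pattern from the triangular structure of $M_K(Z_K)^{-1}$ (the paper phrases this with $M_K(Z_K)$, which has the same triangular shape). Your extra remarks on the unipotent expansion of $M_K(Z_K)^{-1}$ and on irreducibility of $\mathcal{G}$ only make explicit what the paper leaves to the words ``generic point''.
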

\begin{proof}
First consider the case $L<K$. For a generic point $Z\in \mathcal{G}$ we have $P_k^L(Z)\neq 0$ and $e_l^TM_L(Z_L)^{-1}\cdots M_K(Z_K)^{-1}e_i\neq 0$. Therefore we get $\Deriv{z}{L,kl}P_i^K\not\equiv 0$ on $\mathcal{G}$.

For $L=K$ and $K$ even, we have $e_l^TM_K(Z_K)e_i\equiv 0$ for $l>i$ and thus
$$ \Deriv{z}{K,kl}P_i^K \equiv 0,\quad l>i.$$
At a generic point $Z\in \mathcal{G}$, however, we have $P_k^K(Z)\neq 0$ and $e_l^TM_K(Z_K)e_i\not\equiv 0$ for $l\leq i.$ Therefore $\Deriv{z}{K,kl}P_i^K\not\equiv 0$ for $l\leq i.$

Similarly, for $L=K$ and $K$ odd we can conclude that $\Deriv{z}{K,kl}P_i^K \not\equiv 0$ on $\mathcal{G}$ if and only if $l\geq i.$
\end{proof}

\begin{theorem}\label{theorem:SLcompatible}
    For $K\geq 2$, the complete holomorphic vector fields
    $$V_1 = \frac{\partial P^K_i}{\partial z_{1,n2}}\Deriv{z}{1,n1} - \frac{\partial P_i^K}{\partial z_{1,n1}}\Deriv{z}{1,n2}$$
    and
    $$ V_2 = \frac{\partial P^K_i}{\partial z_{2,2n}}\Deriv{z}{2,1n} - \frac{\partial P_i^K}{\partial z_{2,1n}}\Deriv{z}{2,2n}$$
    build a compatible $2$-tuple $(V_2,V_1)$.
\end{theorem}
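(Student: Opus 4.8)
The plan is to verify the two conditions in Definition~\ref{def:holomorphicCompatibleTuple} for the ordered pair $(V_2,V_1)$, so that $\theta_1=V_2$ plays the role of the root. Everything rests on one structural observation, which I would establish first: each of $V_1,V_2$ is a \emph{shear-type} field $\alpha\deriv{u}-\beta\deriv{v}$ whose coefficients $\alpha,\beta$ are free of the two coordinates $u,v$ being differentiated. For $V_1$, where $(u,v)=(z_{1,n1},z_{1,n2})$, combine the formula $\Deriv{z}{L,kl}P^K_i=-P^L_k\cdot e_l^TM_L(Z_L)^{-1}\cdots M_K(Z_K)^{-1}e_i$ established above with the fact that $M_1(Z_1)$ is unipotent lower triangular with leading $2\times 2$ block $\left(\begin{smallmatrix}1&0\\z_{1,21}&1\end{smallmatrix}\right)$ and vanishing top-right block; this gives $P^1_n=1$, $e_1^TM_1(Z_1)^{-1}=e_1^T$ and $e_2^TM_1(Z_1)^{-1}=-z_{1,21}e_1^T+e_2^T$, whence
\[
\frac{\partial P^K_i}{\partial z_{1,n1}}=-e_1^T M_2(Z_2)^{-1}\cdots M_K(Z_K)^{-1}e_i,\qquad
\frac{\partial P^K_i}{\partial z_{1,n2}}=-e_2^T M_1(Z_1)^{-1}\cdots M_K(Z_K)^{-1}e_i,
\]
which involve only $z_{1,21}$ and the blocks $Z_2,\dots,Z_K$, and in particular neither involves $z_{1,n1}$ or $z_{1,n2}$. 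The same argument applied to the unipotent upper triangular factor $M_2(Z_2)$ (using $e_n^TM_2(Z_2)^{-1}=e_n^T$, $M_2(Z_2)^{-1}e_1=e_1$, $M_2(Z_2)^{-1}e_2=-z_{2,12}e_1+e_2$) shows that the coefficients of $V_2$ involve only $z_{2,12}$ and the blocks $Z_1,Z_3,\dots,Z_K$, hence are free of $z_{2,1n},z_{2,2n}$. (Here $n\geq 3$ is used, so that the index pairs in play are pairwise distinct.) As a by-product, a field $\alpha\deriv{u}-\beta\deriv{v}$ with $\alpha,\beta$ free of $u,v$ is locally nilpotent, which accounts for the $\C$-completeness of $V_1,V_2$.

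With this in hand, condition~(2) -- the admissible tree -- is immediate: take the rooted tree on $\{V_1,V_2\}$ with root $V_2$ and a single edge from $V_1$ to $V_2$, and set $\epsilon(V_1,V_2):=z_{1,n1}$. This is legitimate because $z_{1,n1}\in\ker V_2$ (it is not one of the two coordinates $V_2$ differentiates); $z_{1,n1}\notin\ker V_1$, since $V_1(z_{1,n1})=\frac{\partial P^K_i}{\partial z_{1,n2}}$, which is not identically zero on $\mathcal{G}$ by the lemma above with $L=1<K$ -- this is precisely where the hypothesis $K\geq 2$ enters; and $z_{1,n1}\in\ker V_1^2$, since $V_1^2(z_{1,n1})=V_1\!\big(\tfrac{\partial P^K_i}{\partial z_{1,n2}}\big)=0$ because that coefficient is free of $z_{1,n1},z_{1,n2}$ by the first step. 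One really wants $V_2$, not $V_1$, as the root: for $K=2$ and $i<n$ the field $V_2$ may vanish identically on $\mathcal{G}$, whereas the computation just used needs only the non-degeneracy of $V_1$.

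For condition~(1), I would prove the strongest possible statement, namely $\mathrm{span}\!\big(\ker V_2\cdot\ker V_1\big)=\C[\mathcal{G}]$, so that $I=\C[\mathcal{G}]$ is the desired non-zero ideal. The ring $\C[\mathcal{G}]$ is generated as a $\C$-algebra by the images of the coordinate functions $z_{L,kl}$; each of these lies in $\ker V_1$ except for $z_{1,n1},z_{1,n2}$, and in $\ker V_2$ except for $z_{2,1n},z_{2,2n}$, and since $K\geq 2$ these two exceptional pairs are disjoint (their first indices are $1$ and $2$). Hence every coordinate function lies in $\ker V_1$ or in $\ker V_2$. Since $\ker V_1$ and $\ker V_2$ are subrings, any monomial $\mu$ in the coordinates splits as $\mu=\mu'\mu''$, with $\mu'$ collecting the factors equal to $z_{1,n1}$ or $z_{1,n2}$ (so $\mu'\in\ker V_2$) and $\mu''$ the remaining factors (so $\mu''\in\ker V_1$); thus $\mu\in\ker V_2\cdot\ker V_1$, and summing over monomials yields $\C[\mathcal{G}]=\mathrm{span}(\ker V_2\cdot\ker V_1)$.

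Combining the three steps, $(V_2,V_1)$ satisfies both conditions and is a compatible $2$-tuple. The only genuinely laborious step is the first one -- tracking which of the coordinates $z_{1,21},z_{1,n1},z_{1,n2},z_{2,12},z_{2,1n},z_{2,2n}$ actually occur in $\frac{\partial P^K_i}{\partial z_{1,n1}}$, $\frac{\partial P^K_i}{\partial z_{1,n2}}$, $\frac{\partial P^K_i}{\partial z_{2,1n}}$, $\frac{\partial P^K_i}{\partial z_{2,2n}}$ -- but this reduces to the block-triangular structure of a single matrix factor ($M_1$ for $V_1$, $M_2$ for $V_2$) together with the formula for $\Deriv{z}{L,kl}P^K_i$ already at our disposal; conditions (1) and (2) then follow with essentially no further work.
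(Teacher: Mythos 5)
Your argument is correct and verifies the same two conditions as the paper, with condition (1) handled identically (every coordinate $z_{L,kl}$ lies in $\ker V_1$ or $\ker V_2$, and products of such coordinates span the polynomials, hence a dense subspace of $\mathcal{O}(\mathcal{G})$, so one may take $I=\mathcal{O}(\mathcal{G})$ --- your phrase ``$I=\C[\mathcal{G}]$ is the desired ideal'' should be read this way). The difference lies in how the edge condition is established: the paper takes $f=z_{1,n2}$ and shows $\tfrac{\partial}{\partial z_{1,nj}}\tfrac{\partial P_i^K}{\partial z_{1,n2}}=0$ for $j=1,\dots,n-1$ by factoring $M_1(Z_1)^{-1}$ into a block-diagonal matrix times a bottom-row shear, whereas you take $f=z_{1,n1}$ and compute all four coefficients in closed form from $\tfrac{\partial P_i^K}{\partial z_{L,kl}}=-P_k^L\,e_l^TM_L(Z_L)^{-1}\cdots M_K(Z_K)^{-1}e_i$ together with $P^1_n=1$, $e_1^TM_1^{-1}=e_1^T$, $e_2^TM_1^{-1}=-z_{1,21}e_1^T+e_2^T$ and the analogous facts for $M_2$. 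Both rest on the same unitriangular structure, but your version yields a bit more: the coefficients of $V_1$ (resp.\ $V_2$) are manifestly free of $z_{1,n1},z_{1,n2}$ (resp.\ $z_{2,1n},z_{2,2n}$), which gives completeness of both fields as shears (the paper instead invokes linearity of $P_i^K$ in each variable) and makes $V_1^2(f)=0$ immediate; your use of the $L<K$ non-vanishing lemma for $f\notin\ker V_1$ is exactly where $K\geq 2$ enters, as in the paper. Your observations that $n\geq 3$ is needed for the index pairs to be distinct, and that $V_2$ may vanish identically for $K=2$, $i<n$ (so the root must be $V_2$, with the edge condition carried by $V_1$), are accurate and consistent with the paper's setup, though the paper does not spell them out.
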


\begin{proof}
    Observe that $P_i^K$ is (at most) linear in each variable $z_{L,kl}$. Therefore the two holomorphic vector fields $V_1$ and $V_2$ are $\C$-complete.

    We check the two properties of Definition \ref{def:holomorphicCompatibleTuple}. Note that each variable $z_{L,kl}$ is in the kernel of $V_1$ or $V_2$. Hence we have
        $$ \overline{\mathrm{span} (\ker V_1 \cdot \ker V_2)} = \mathcal{O}(\mathcal{G})$$
   and this implies (1). 

      For property (2), it remains to find a function $f\in \left(\ker V_1^2\setminus \ker V_1 \right)\cap \ker V_2.$ We show that $f=z_{1,n2}$ does the job. Clearly, $f\in \ker V_2$. Moreover, $V_1(f)=-\frac{\partial P^K_i}{\partial z_{1,n1}}\not\equiv 0$ by the above lemma. We prove that
      $$ \Deriv{z}{1,nj} \frac{\partial P_i^K}{\partial z_{1,n2}}=0,\quad 1\leq j \leq n-1.$$
We can write
      $$ M_1(Z_1)= \begin{pmatrix}
          M & 0\\ z^T & 1
      \end{pmatrix} = \begin{pmatrix}
          M & 0 \\ 0 & 1 
      \end{pmatrix}\begin{pmatrix}
          I_{n-1} & 0 \\ z^T & 1
      \end{pmatrix},$$
      for some lower triangular $(n-1)\times (n-1)$-matrix $M$ with $\Deriv{z}{1,nj}M=0$ for $j=1,\dots,n-1$, and $z^T=(z_{1,n1},\dots,z_{1,n(n-1)})$. Since
      $$ M_1(Z_1)^{-1} = \begin{pmatrix}
          I_{n-1} & 0 \\ -z^T & 1
      \end{pmatrix} \begin{pmatrix}
          M^{-1} & 0 \\ 0 & 1
      \end{pmatrix}$$
     we obtain
      $$ \Deriv{z}{1,nj}\Deriv{z}{1,n2} M_1(Z_1)^{-1} = \underbrace{\Deriv{z}{1,nj}\Deriv{z}{1,n2}\begin{pmatrix}
          I_{n-1} & 0 \\ -z^T & 1
      \end{pmatrix}}_{=0}\cdot  \begin{pmatrix}
          M^{-1} & 0 \\ 0 & 1
      \end{pmatrix} = 0$$
      for $j=1,\dots,n-1.$
      By the recursive formula of $P^K$ and the product rule, we conclude that
   $$ \Deriv{z}{1,nj}\frac{\partial P^K_i}{\partial z_{1,n2}} = 0. $$
   This means, in particular, that $V_1(f)\in \ker V_1$, which was to be proved.  
\end{proof}

\subsection{The symplectic case}
Let us start with the definition of a symplectic matrix. Consider the skew-symmetric $2n\times 2n$-matrix
$$ \Omega = \begin{pmatrix}
    0&I_n\\-I_n&0
\end{pmatrix},$$
where $I_n$ denotes the $n\times n$-identity matrix and $0$ the $n\times n$-zero matrix. Then
$$ \SpC = \{M\in \C^{2n\times 2n}: M^T\Omega M=\Omega\}$$
is the symplectic group. As in the previous subsection, we define an elementary mapping $$ M\colon\C^m\to \SpC,$$
where $m= n(n+1)/2$. For a natural number $K$ we write elements in $\C^m$ as follows
$$ Z_K =(z_{K,11},\dots,z_{K,kl},\dots,z_{K,nn}),\quad 1\leq k\leq l\leq n. $$
Observe that 
$$ \psi(Z_K)= \begin{pmatrix}
    z_{K,11}& z_{K,12} & \cdots & z_{K,1n} \\
    z_{K,12} & z_{K,22} & \cdots & z_{K,2n} \\
    \vdots & \vdots & \ddots & \vdots \\
    z_{K,1n} & z_{K,2n} &\cdots & z_{K,nn}
\end{pmatrix}$$
defines an isomorphism $\psi\colon\C^m\to \mathrm{Sym}_n(\C)$, where $\mathrm{Sym}_n(\C)$ denotes the vector space of symmetric $n\times n$-matrices, that is, matrices $A\in \C^{n\times n}$ with $A^T=A$. By abuse of notation, we let $Z_K$ denote both the vector and the matrix. Then we define for even $K$
$$ M_K(Z_K)=\begin{pmatrix}
    I_n & Z_K \\ 0& I_n
\end{pmatrix}$$
and for odd $K$
$$ M_K(Z_K) = \begin{pmatrix}
    I_n & 0\\ Z_K & I_n
\end{pmatrix}.$$
Note that $M_K(Z_K)$ is actually a symplectic matrix. 

For a fixed natural number $K$, the Gromov--Vaserstein fibration $$ P^K=(P^K_1,\dots,P^K_{2n})\colon(\C^m)^K\to \C^{2n}$$ is given by
$$ P^K(Z_1,\dots,Z_K) = e_{2n}^TM_1(Z_1)\cdots M_K(Z_K).$$
We introduce the notation
$$ P_f^K = (P_1^K,\dots,P_n^K),\quad P_s^K=(P^K_{n+1},\dots,P_{2n}^K).$$
\begin{defi}
    Let $a=(a_1,\dots,a_n)\in \C^n\setminus \{0\}$. For $K\geq 2$ even, we define
    $$ \mathcal{G}:=\mathcal{G}_{K,a}:=\{Z\in \C^n\times (\C^m)^{K-1}: P_s^{K}(Z)=a\}.$$
    And for $K\geq 3$ odd, we define
    $$ \mathcal{G}:=\mathcal{G}_{K,a}:=\{Z\in \C^n\times (\C^m)^{K-1}: P_f^{K}(Z)=a\}.$$
\end{defi}

\begin{lemma}
    Let $K\geq 2$ and $a\in \C^n\setminus \{0\}$. Then
$$ \mathcal{G}_{K,a}\times \C^l \cong (P^{K+1})^{-1}(y)$$
for some natural number $l$ and some $y\in \CwO$.
\end{lemma}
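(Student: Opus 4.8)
The plan is to reduce the statement, exactly as in the special linear case (cf.\ the proof of \cite{SLnPaper}*{Lemma~3.7}), to a single block-matrix identity plus one elementary linear-algebra fact. Since
$$ P^{K+1}(Z_1,\dots,Z_{K+1}) \;=\; e_{2n}^T M_1(Z_1)\cdots M_{K+1}(Z_{K+1}) \;=\; P^{K}(Z_1,\dots,Z_K)\cdot M_{K+1}(Z_{K+1}), $$
with $P^K$ read as a row vector in $\C^{2n}$, the condition $P^{K+1}(Z)=y$ is equivalent to $P^{K}(Z_1,\dots,Z_K) = y\cdot M_{K+1}(Z_{K+1})^{-1}$. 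First I would record the bookkeeping observation that $e_{2n}^T M_1(Z_1)$ depends only on the last row $(z_{1,1n},\dots,z_{1,nn})$ of the symmetric matrix $Z_1$, so that both $P^K$ and $P^{K+1}$ descend to the reduced domains $\C^n\times(\C^m)^{K-1}$ and $\C^n\times(\C^m)^{K}$; the variety $\mathcal{G}_{K,a}$ already lives in the former, and the block $Z_{K+1}$ is precisely the coordinate by which $(P^{K+1})^{-1}(y)$ exceeds the domain of $P^K$.

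Next I would split according to the parity of $K$ and write $y=(y_f,y_s)$ with $y_f,y_s\in\C^n$. If $K$ is even, then $K+1$ is odd, $M_{K+1}(Z_{K+1})^{-1}=\left(\begin{smallmatrix} I_n & 0\\ -Z_{K+1} & I_n\end{smallmatrix}\right)$, and a one-line block multiplication gives $y\cdot M_{K+1}(Z_{K+1})^{-1}=(y_f-y_s Z_{K+1},\,y_s)$; hence $P^{K+1}(Z)=y$ becomes the two conditions $P_s^{K}=y_s$ and $y_s Z_{K+1}=y_f-P_f^{K}$. Choosing $y:=(0,a)$, which lies in $\CwO$ because $a\neq 0$, the first condition is exactly the defining equation of $\mathcal{G}_{K,a}$ and the second reads $a\,Z_{K+1}=-P_f^{K}$. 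For $K$ odd the identical computation with $M_{K+1}(Z_{K+1})^{-1}=\left(\begin{smallmatrix} I_n & -Z_{K+1}\\ 0 & I_n\end{smallmatrix}\right)$ and the choice $y:=(a,0)\in\CwO$ yields $P_f^{K}=a$ (the defining equation of $\mathcal{G}_{K,a}$) together with $a\,Z_{K+1}=-P_s^{K}$. In either case one is reduced to an equation of the form $a\,Z_{K+1}=\gamma(w)$, where $Z_{K+1}$ ranges over $\mathrm{Sym}_n(\C)$ and $\gamma\colon\mathcal{G}_{K,a}\to\C^n$ is the polynomial function $w\mapsto -P_f^{K}(w)$ (resp.\ $w\mapsto -P_s^{K}(w)$).

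The step I expect to be the (minor) crux is to solve this last equation uniformly. The key point is the elementary fact that for $a\in\C^n\setminus\{0\}$ the linear map $\mathrm{Sym}_n(\C)\to\C^n$, $Z\mapsto a\,Z$, is surjective; indeed, after a linear change of variables bringing $a$ to the first standard basis vector $e_1$, $a\,Z$ is just the (unconstrained) first row of $Z$. Fixing a linear section $\sigma\colon\C^n\to\mathrm{Sym}_n(\C)$ of $Z\mapsto a\,Z$ and setting $\mathcal{K}:=\ker(Z\mapsto a\,Z)\cong\C^{n(n-1)/2}$, one then checks that
$$ \mathcal{G}_{K,a}\times\mathcal{K}\;\longrightarrow\;(P^{K+1})^{-1}(y),\qquad (w,\zeta)\;\longmapsto\;\bigl(w,\ \sigma(\gamma(w))+\zeta\bigr) $$
is a biholomorphism, with (polynomial) inverse $(w,Z_{K+1})\mapsto\bigl(w,\ Z_{K+1}-\sigma(\gamma(w))\bigr)$. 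This gives the claimed isomorphism with $l=n(n-1)/2$; if one prefers to keep the full domain $(\C^m)^{K+1}$ of $P^{K+1}$, one picks up an extra free factor $\C^{\,m-n}=\C^{n(n-1)/2}$ from the unused entries of $Z_1$, so $l=n(n-1)$. Beyond getting the two parity conventions straight, the argument is entirely formal.
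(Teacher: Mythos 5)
Your argument is correct and takes essentially the same route as the paper: the identity $P^{K+1}=P^K\cdot M_{K+1}(Z_{K+1})$ reduces the fiber to the defining equation of $\mathcal{G}_{K,a}$ together with an affine equation in $Z_{K+1}$, whose solution set is a graph over $\mathcal{G}_{K,a}$ times an affine factor, with the unused entries of $Z_1$ supplying the rest, so that $l=n(n-1)$ on the full domain $(\C^m)^{K+1}$. The only (cosmetic) difference is that where the paper rearranges $P_s^K=b-aZ_{K+1}$ to express $n$ of the entries of $Z_{K+1}$, you package this elimination invariantly via a linear section of the surjective map $\mathrm{Sym}_n(\C)\to\C^n$, $Z\mapsto aZ$, which handles the symmetry constraint cleanly and makes the biholomorphism and its inverse explicit.
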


\begin{proof}
    Without loss of generality, we assume $K$ to be odd (the even case is symmetrical). Set $y=(a,b)$ for an arbitrary $b\in \C^n.$ We have $a\neq 0$ by assumption, hence $y\in \CwO.$ Now observe that
    \begin{align*}
    \begin{pmatrix}
    a & b 
\end{pmatrix} &= P^{K+1}(Z_1,\dots,Z_{K+1})= P^{K}(Z_1,\dots,Z_{K})M_{K+1}(Z_{K+1}) \\ &= \begin{pmatrix}
    P_f^{K} & P_s^{K}
\end{pmatrix} \Eu{Z_{K+1}}
\end{align*}
and thus
\begin{align*}
    \begin{pmatrix}
    P_f^{K} & P_s^{K}
\end{pmatrix} = \begin{pmatrix}
    a & b 
\end{pmatrix} \Eu{-Z_{K+1}} = \begin{pmatrix}
    a & b - aZ_{K+1}
\end{pmatrix}.
\end{align*}
Since we assume $a\neq 0$, we can rearrange the equation $P_s^K=b-aZ_{K+1}$ in such a way that $n$ of the variables $z_{K+1,kl}$ can be expressed. This leads to
$$ \underbrace{\{Z\in (\C^m)^K:P_f^K(Z)=a\}}_{\tilde{\mathcal{G}}}\times \C^{m-n}\cong (P^{K+1})^{-1}(y).$$
Observe that there are no conditions placed on the variables $z_{1,kl}$ for $1\leq k\leq l < n,$ by definition of $P^{K+1}$ (since we project the first factor $M_1(Z_1)$ to the last row). Therefore we obtain
$$ \tilde{\mathcal{G}} \cong \mathcal{G}\times \C^{m-n}$$
and this finishes the proof. For more details, see \cite{Schott}*{Lemma 3.15}. 
\end{proof}

\begin{prop}
    Let $K\geq 2$ and $a\in \C^n\setminus \{0\}.$ Then $\mathcal{G}_{K,a}$ is smooth if one of the following properties is satisfied
    \begin{enumerate}[label=(\arabic*)]
        \item $K$ is odd,
        \item $K$ is even and $a\neq e_n^T.$
    \end{enumerate}
\end{prop}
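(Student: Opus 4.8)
The plan is to translate the question into the regularity of a single fiber of the Gromov--Vaserstein fibration $P^{K+1}$, whose singular fibers have already been classified in \cite{Schott}. Since a variety is smooth if and only if its product with an affine space is smooth, the preceding lemma reduces the claim to showing that $(P^{K+1})^{-1}(y)$ is a regular fiber, where $y\in\CwO$ is the base point produced in the proof of that lemma. Tracing that proof (and its mirror image when $K$ is even), one reads off that $y=(a,b)$ for $K$ odd and $y=(b,a)$ for $K$ even, where $b\in\C^n$ is arbitrary; thus the first $n$ coordinates of $y$ form the block $a$ when $K$ is odd, the last $n$ coordinates form the block $a$ when $K$ is even, and in either case this block is non-zero since $a\neq 0$. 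Consistently with the preceding lemma, the regularity of $(P^{K+1})^{-1}(y)$ will turn out not to depend on the free parameter $b$.

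Next I would invoke the classification of singular fibers of $P^{K+1}$ from \cite{Schott}, the symplectic counterpart of \cite{SLnPaper}*{Remark 4.1}: when $K+1$ is even, $(P^{K+1})^{-1}(y)$ is singular only for $y=e_{2n}^T$; when $K+1$ is odd, it is singular exactly when the last $n$ coordinates of $y$ equal $e_n^T$. In case (1) $K$ is odd, so $K+1$ is even and the first $n$ coordinates of $y=(a,b)$ form the block $a\neq 0$, whereas the first $n$ coordinates of $e_{2n}^T$ vanish; hence $y\neq e_{2n}^T$, the fiber $(P^{K+1})^{-1}(y)$ is regular, and $\mathcal{G}_{K,a}$ is smooth. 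In case (2) $K$ is even, so $K+1$ is odd and the last $n$ coordinates of $y=(b,a)$ form the block $a$; hence $(P^{K+1})^{-1}(y)$ is singular if and only if $a=e_n^T$, and $\mathcal{G}_{K,a}$ is smooth whenever $a\neq e_n^T$. The same computation gives the converse for even $K$, so there $\mathcal{G}_{K,a}$ is in fact smooth if and only if $a\neq e_n^T$.

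The substantive input is \cite{Schott}'s classification of singular fibers; in the present paper the argument is only a careful bookkeeping of the parity of $K$ and of the two halves of the base point $y$. A self-contained alternative would be to compute the Jacobian of the defining map ($P_f^K$ if $K$ is odd, $P_s^K$ if $K$ is even) along $\mathcal{G}_{K,a}$ via a product-rule formula for the partial derivatives $\partial P_i^K/\partial z_{L,kl}$ as in the special linear case, and then to exhibit $n$ of these partial derivatives whose values are linearly independent at every point of $\mathcal{G}_{K,a}$, except when $K$ is even and $a=e_n^T$. I expect this rank argument, essentially an analysis of the matrices $M_L(Z_L)\cdots M_K(Z_K)$ restricted to the zero set, to be the main obstacle to a direct proof, and it is precisely what makes routing through the fiber classification of \cite{Schott} the efficient route.
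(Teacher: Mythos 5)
Your proposal is correct and follows essentially the same route as the paper: it reduces smoothness of $\mathcal{G}_{K,a}$ via the preceding product decomposition $\mathcal{G}_{K,a}\times\C^l\cong (P^{K+1})^{-1}(y)$ (with $y=(a,b)$ for $K$ odd, $y=(b,a)$ for $K$ even) and then invokes the classification of singular fibers of $P^{K+1}$ from \cite{Schott}, with the same parity bookkeeping; note that your $y=e_{2n}^T$ for $K+1$ even is exactly the paper's $(0,e_n^T)$. The sketched Jacobian-rank alternative is not needed and is not carried out, but the main argument stands on its own.
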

\begin{proof}
    A classification of the singular fibers of $P^K$ can be found in sections 3.1.1 and 3.2.1 in \cite{Schott}.

    Suppose $K$ is odd. Then we have
\begin{align}\label{equation:smoothVarieties}
        \mathcal{G}_{K,a}\times \C^l \cong (P^{K+1})^{-1}(y),
    \end{align}
    with $y=(a,b)$ for some $b\in \C^n$. The only singular fiber of $P^{K+1}$ is the one over $y=(0,e_n^T)$. Since we assume $a\neq 0$, the variety $\mathcal{G}_{K,a}$ is smooth.

    For $K$ even, we have Equation (\ref{equation:smoothVarieties}) again, but with $y=(b,a)$ for some $b\in \C^n.$ A fiber $(P^{K+1})^{-1}(b,a)$ is singular if and only if $a= e_n^T.$ And this proves the claim.
\end{proof}

\begin{theorem}\label{theorem:SPcompatible}
    Let $K\geq 2$ and $n\geq 2$. Moreover, let $\mathcal{G}$ be smooth. Then there exists a compatible $k$-tuple on $\mathcal{G}$ for some $k$. In fact, $k$ turns out to be either two or three.
\end{theorem}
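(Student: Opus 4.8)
The plan is to construct the compatible $k$-tuple explicitly from the defining equations of $\mathcal{G}$, mirroring the strategy of Theorem~\ref{theorem:SLcompatible} in the special linear case, but accounting for the fact that here the base of the fibration is $\C^n$ rather than $\C$, so we expect to need $n$ vector fields that are pairwise ``orthogonal'' with respect to which variables they move, plus possibly one extra field to serve as the root of the admissible tree. Concretely, write $\mathcal{G}=\mathcal{G}_{K,a}$ with defining polynomial map $p=P_s^K-a$ (or $P_f^K-a$), and look at the vector fields $D_y(p)$ of the form \eqref{beautyfulVectorField}: choosing the $(l+1)=(n+1)$-tuple of variables $y$ to consist of $n$ variables from the last factor $Z_K$ together with one variable from the first factor $Z_1$ that does not appear in $p$ at all should produce a field $V$ with a large kernel and a single ``independent'' direction, exactly as $z_{1,n2}$ worked before. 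The first task is thus to single out, for each of the $n$ rows of the equation $P_s^K = b - a Z_K$ (after the rearrangement expressing $n$ of the entries of $Z_K$), a vector field whose kernel contains every variable except the two it pairs up.

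The second step is to verify condition~(1) of Definition~\ref{def:holomorphicCompatibleTuple}: that some nonzero ideal lies in $\overline{\mathrm{span}(\prod_i \ker V_i)}$. As in the special linear case, the natural way to do this is to arrange that \emph{every} coordinate variable $z_{L,kl}$ lies in the kernel of at least one of the $V_i$, so that $\overline{\mathrm{span}(\prod_i \ker V_i)} = \mathcal{O}(\mathcal{G})$ and we may take $I=\mathcal{O}(\mathcal{G})$ itself. This forces the combinatorial shape of the tuple: the $n$ ``block-diagonal'' fields coming from $Z_K$ collectively need to cover all the $Z_K$-variables and all the $Z_1,\dots,Z_{K-1}$-variables, and if one field is not enough to reach the root one adds a second or third, which is why $k$ ends up being two or three. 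I would carry out the bookkeeping of which variable is killed by which field and check that the union is everything; this is where the precise structure of $M_K(Z_K)=\Eu{Z_K}$ (resp.\ $\El{Z_K}$) and the earlier lemma computing $\Deriv{z}{L,kl}P^K_i$ get used.

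The third step is condition~(2): building the $S$-admissible rooted tree, i.e.\ producing, for each non-root field $\phi$ with parent $\theta$, a regular function $\epsilon(\phi,\theta)\in(\ker\phi^2\setminus\ker\phi)\cap\ker\theta$. The candidate is always a single coordinate variable, say $z$, chosen so that $\phi(z)\neq 0$ but $\theta(z)=0$ and $\phi(\phi(z))=0$ --- the last being the analogue of the second-derivative vanishing $\Deriv{z}{1,nj}\frac{\partial P^K_i}{\partial z_{1,n2}}=0$ established at the end of the proof of Theorem~\ref{theorem:SLcompatible}. The key algebraic input is again that $P^K_i$ is at most linear in each variable $z_{L,kl}$ and that the relevant mixed second partials of the matrix products vanish because the factor $M_1(Z_1)$ (or $M_K(Z_K)$) splits off a unipotent block depending only on the variables in question. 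So the plan is: (a) fix the $n$ base fields and, if needed, one or two auxiliary ones; (b) check the kernel-covering property to get condition~(1); (c) exhibit the tree edges with explicit coordinate functions and verify the $\ker\phi^2\setminus\ker\phi$ and $\ker\theta$ membership via the linearity/mixed-partial arguments; (d) conclude that $(\theta_1,\dots,\theta_k)$ is compatible, and invoke Theorem~\ref{theorem:SufficientConditionsDP} together with Proposition~\ref{prop:spanningVectorFields} and Lemma~\ref{lemma:usefulProperty} for the density property.

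The main obstacle I anticipate is step three in the symplectic setting: because $Z_K$ is a \emph{symmetric} matrix, the variables $z_{K,kl}$ and $z_{K,lk}$ are identified, so the clean ``upper/lower triangular block'' decomposition used for $\mathrm{SL}_n$ is not directly available and one must be careful that the function chosen as $\epsilon(\phi,\theta)$ genuinely has $\phi(\phi(z))=0$ --- the symmetry can cause an otherwise-absent second-order term to reappear. Handling this correctly, and in particular seeing exactly when $n$ base fields suffice (giving $k=2$, one root plus... ) versus when an extra field is forced (giving $k=3$), is the delicate part; it presumably splits into the two cases $K$ odd / $K$ even and possibly small-$n$ exceptions, which is consistent with the theorem's hedge that $k$ is ``either two or three''.
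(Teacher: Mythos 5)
Your high-level frame (exhibit explicit complete fields, get condition (1) by covering all coordinate variables with kernels, use a single coordinate as the edge function, then invoke Theorem \ref{theorem:SufficientConditionsDP}, Proposition \ref{prop:spanningVectorFields} and Lemma \ref{lemma:usefulProperty}) is the right one, but the plan as written has genuine gaps and rests on a misreading of what condition (1) forces. You expect to need $n$ ``orthogonal'' fields plus a root, and you let this drive the combinatorics; but covering is a statement about \emph{kernels}, and since any candidate field moves only a handful of variables, its kernel already contains all the others. In the paper two fields suffice for every $n\geq 3$ and every $K\geq 2$: the determinant field $V=D_x(P_s^K)$ (resp.\ $D_x(P_f^K)$) for the specific tuple $x=(z_{1,n1},\dots,z_{1,nn},z_{2,11})$, which is complete because its coefficients are affine, together with the quadratic shear $\gamma=(z_{1,n3})^2\,\partial/\partial z_{2,22}-z_{1,n2}z_{1,n3}\,\partial/\partial z_{2,23}+(z_{1,n2})^2\,\partial/\partial z_{2,33}$. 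Every variable lies in $\ker V$ or $\ker\gamma$, and the edge function is simply $z_{2,22}$: it lies in $\ker V$, and it has degree one for $\gamma$ because the coefficients of $\gamma$ involve only variables that $\gamma$ does not move. So the ``main obstacle'' you anticipate (mixed second partials reappearing because $Z_K$ is symmetric) never arises --- it is dissolved by choosing $\gamma$ with coefficients in its own kernel, a construction your proposal does not find and which is the actual content of the proof. Your concrete suggestion for the determinant field is also not available: you want $y$ to contain ``one variable from the first factor $Z_1$ that does not appear in $p$ at all'', but in $\mathcal{G}_{K,a}\subset\C^n\times(\C^m)^{K-1}$ the only surviving $Z_1$-variables are $z_{1,n1},\dots,z_{1,nn}$ and all of them occur in the defining equations; moreover completeness of a general $D_y(p)$ is not automatic and is nowhere verified in your plan.

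The case analysis you predict (parity of $K$, vague small-$n$ exceptions) also does not match where the real difficulties sit. The same pair $(\gamma,V)$ works for both parities once $n\geq 3$; for $n=2$, $K\geq 3$ one must replace $\gamma$ by $\tilde\gamma=(P_{n+2}^2)^2\,\partial/\partial z_{3,11}-P_{n+1}^2P_{n+2}^2\,\partial/\partial z_{3,12}+(P_{n+1}^2)^2\,\partial/\partial z_{3,22}$, whose coefficients are the completed-flow functions $P^2_{n+1},P^2_{n+2}$; and the only case where $k=3$ is $n=2$, $K=2$, where no compatible pair is found and the proof requires the entirely separate explicit construction of the fields $V_1,V_2,V_3$ on the five-dimensional variety $\mathcal{G}\subset\C^5$ with edge function $w_2$. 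None of these constructions, nor any substitute for them, appears in your proposal, so as it stands it is a plausible programme rather than a proof: the decisive steps --- producing complete fields with the required kernel and degree properties in each of the three regimes --- are exactly the ones left open.
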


\begin{proof}
    Start with the case $n\geq 3$. Consider the $n+1$ variables $x=(z_{1,n1},\dots,z_{1,nn},z_{2,11})$. Then the corresponding vector field (see (\ref{beautyfulVectorField}))
    $$ V:=\begin{cases}
        D_x(P_s^{K}) & K \text{ even},\\
        D_x(P_f^{K}) & K \text{ odd}
    \end{cases}$$
    is complete on $\mathcal{G},$ since it is affine.
    
    Moreover, the holomorphic vector field
    $$ \gamma = (z_{1,n3})^2\Deriv{z}{2,22}- z_{1,n2}z_{1,n3}\Deriv{z}{2,23} + (z_{1,n2})^2\Deriv{z}{2,33}$$
    is complete on $\mathcal{G}$. We claim that $(\gamma, V)$ is a compatible $2$-tuple. Observe that each variable $z_{k,ij}$ is in the kernel of $ \gamma$ or in the kernel of $ V$. Therefore
    $$ \overline{\mathrm{span}(\ker \gamma \cdot \ker V)} = \mathcal{O}(\mathcal{G}).$$
    Furthermore, we have
    $$ f=z_{2,22}\in \ker V\cap \left(\ker \gamma^2\setminus \ker \gamma\right)$$
    and this proves the claim. Note that this argument works for every $K\geq 2$, hence the theorem is true for $n\geq 3$.

    It remains to prove it for $n=2$. Note that in this case, the vector field $\gamma$ from the previous step does not exist. We therefore divide this case into two steps.

    First, we consider $K\geq 3$. We choose again the vector field $V$ from the previous case and choose
    $$ \tilde{\gamma} = (P^2_{n+2})^2 \Deriv{z}{3,11} -(P^2_{n+1}P^2_{n+2}) \Deriv{z}{3,12}+(P^2_{n+1})^2 \Deriv{z}{3,22},$$
    which is also complete on $\mathcal{G}$. Then we conclude that $(\tilde{\gamma},V)$ is a compatible 2-tuple with the same reasoning as before.

    Finally, we consider $K=2$. We are not able to find a compatible $2$-tuple in this case, instead we have a compatible 3-tuple. Here the variety (in the notation of \cite{Sp4}) is 
    $$ \mathcal{G} = \left\lbrace (z_2,z_3,w_1,w_2,w_3)\in \C^5: \begin{pmatrix}
    w_1 & w_2 \\ w_2 & w_3
\end{pmatrix}\begin{pmatrix}
    z_2\\ z_3
\end{pmatrix} = \begin{pmatrix}
    b_1 \\ b_2 \end{pmatrix} \right\rbrace$$
    for some $(b_1,b_2)\in \C^2\setminus \{0\}$.
    The following three vector fields are complete on $\mathcal{G}$:
\begin{align*}
    V_1 &= -z_2w_3 \deriv{z_2} + z_2w_2 \deriv{z_3} + (w_1w_3-w_2^2)\deriv{w_1}\\
    V_2 &= z_3^2\deriv{w_1} -z_2z_3\deriv{w_2} + z_2^2\deriv{w_3}\\
    V_3 &= z_3^2\deriv{z_2} -w_1z_3\deriv{w_2} + (w_1z_2-w_2z_3)\deriv{w_3}
\end{align*}
    Then $(V_1,V_2,V_3)$ is a compatible 3-tuple. To prove this, note that $z_2,z_3\in \ker V_2$, $w_1\in \ker V_3$ and $w_2,w_3\in \ker V_1$. Hence $$\overline{\mathrm{span}(\ker V_1\cdot \ker V_2 \cdot \ker V_3)} = \mathcal{O}(\mathcal{G}).$$
    Moreover, we have $w_2\in \left(\ker V_2^2\setminus \ker V_2\right) \cap \ker V_1$ and $w_2\in \left(\ker V_3^2\setminus \ker V_3\right)\cap \ker V_1$, which means that there is a $\{V_1,V_2,V_3\}$-admissible rooted tree $(T,\pi,\epsilon)$ with root $V_1$. More precisely, the map $\epsilon\colon\mathrm{Edge}(T)\to \mathcal{O}(\mathcal{G})$ can be defined by
    $$ \epsilon(V_2,V_1):= w_2, \quad \epsilon(V_3,V_1):=w_2.$$
    This finishes the proof of the theorem.
\end{proof}

%
%

\end{document}